\newtheorem{ththurston}{Theorem}%
\newtheorem{defgraph}{Definition}[section]
\newtheorem{defisograph}[defgraph]{Definition}
\newtheorem{defplanarmap}[defgraph]{Definition}
\newtheorem{defisomap}[defgraph]{Definition}
\newtheorem{defsubgraph}[defgraph]{Definition}
\newtheorem{defbipmap}[defgraph]{Definition}
\newtheorem{propcnsbip}[defgraph]{Proposition}
\newtheorem{notation}[defgraph]{Notation}
\newtheorem{definbipmap}[defgraph]{Definition}
\newtheorem{defdegree}[defgraph]{Definition}
\newtheorem{propconnconv}[defgraph]{Proposition}
\newtheorem{propdegbwvf}[defgraph]{Property}
\newtheorem{propdegunin}[defgraph]{Property}
\newtheorem{theuler2}[defgraph]{Theorem}
\newtheorem{defSDR}[defgraph]{Definition}
\newtheorem{defmarcond}[defgraph]{Definition}
\newtheorem{thhall}[defgraph]{Theorem}
\newtheorem{thgamcon}{Theorem}[section]
\newtheorem{defrepresentation}[thgamcon]{Definition}
\newtheorem{proprepre}[thgamcon]{Proposition}
\newtheorem{proplabel}[thgamcon]{Proposition}
\newtheorem{defrealiz}[thgamcon]{Definition}
\newtheorem{defbalmap}[thgamcon]{Definition}
\newtheorem{lemconvsubmap}[thgamcon]{Lemma}
\newtheorem{corconv}[thgamcon]{Corollary}
\newtheorem{propwhitedeg}[thgamcon]{Proposition}
\newtheorem{thbalmaps}[thgamcon]{Theorem}
\newtheorem{corbalmaps}[thgamcon]{Corollary}
\newtheorem{thbalmap}[thgamcon]{Theorem}
\newtheorem{corbalmap}[thgamcon]{Corollary}
\newtheorem{propRHcond}{Proposition}[section]
\newtheorem{defassmat}[propRHcond]{Definition}
\newtheorem{defloccond}[propRHcond]{Definition}
\newtheorem{thcond}[propRHcond]{Theorem}
\newtheorem{thexists}[propRHcond]{Theorem}
\begin{document}

\renewcommand{\proofname}{\textbf{Proof}}

\begin{Large}
\begin{center}
\textbf{\large{Realizations of self branched coverings of the $2$-sphere}}
\end{center}
\end{Large}

\begin{center}
\textbf{J. Tomasini}\footnote{partially supported by ANR-13-BS01-0002, project LAMBDA}
\end{center}
\begin{small}
\begin{center}
\textit{Laboratoire Angevin de REcherche en MAthématiques (LAREMA)} - \textit{Université d'Angers}

\today
\end{center}
\end{small}

---------------------------------------------------------------------------------------------------------------------------------------------
\begin{abstract}
For a degree $d$ self branched covering of the $2$-sphere, a notable combinatorial invariant is an integer partition of $2d-2$, consisting of the multiplicities of the critical points. A finer invariant is the so called Hurwitz passport. The realization problem of Hurwitz passports remain largely open till today. In this article, we introduce two different types of finer invariants: a bipartite map and an incident matrix. We then settle completely their realization problem by showing that a map, or a matrix, is realized by a branched covering if and only if it satisfies a certain balanced condition.
A variant of the bipartite map approach was initiated by W. Thurston. Our results shed some new lights to the Hurwitz passport problem.
\end{abstract}

\vspace{3mm}

\begin{small}
\textit{ \textbf{MSC Classification:} 57M12; 57M15}

\textit{ \textbf{Key words:} Branched coverings; Hurwitz problem; Balanced map }
\end{small}

---------------------------------------------------------------------------------------------------------------------------------------------

\section{Introduction}

The main topic of this article is the study of branched covers from $\mathbb{S}^2$ to $\mathbb{S}^2$.
Typical examples are meromorphic functions defined on the Riemann sphere. Our objective is to introduce a new set of combinatorial properties of  these branched coverings and solve the related realization problems. 

A map $\pi: \mathbb{S}^2 \rightarrow \mathbb{S}^2$ is called {\bf a branched (or ramified) covering} of degree $d$, if there exists some finite subset $F \subset \mathbb{S}^2$ such that
\begin{itemize}\item the restriction map $\pi : \mathbb{S}^2 \setminus \pi^{-1}(F) \rightarrow \mathbb{S}^2 \setminus F$ is a covering map of degree $d$;
\item for each point $x \in F$, there is a neighborhood $V$ of $x$, and a neighborhood $U$ of  each preimage $y$ of $x$ by $\pi$,  such that the restricted map $\pi|_U : U \rightarrow V$ is equivalent, up to topological change of coordinates,  to the map $z \mapsto z^k$ on the unit disc, for some integer $k>0$. 
\end{itemize}
This integer is uniquely determined for any point $y$ of $\pi^{-1}(F)$ (and more generally for any point of $\mathbb{S}^2$), and it's called the \textbf{ramification number} of $y$. Notice that this number is equal to the number of preimages, close to $y$, of a point close to $x$. In particular, this integer is equal to $1$ if and only if $\pi$ is locally a homeomorphism.

A point whose the ramification number is greater than $1$ is called a \textbf{critical point}, and its image a \textbf{critical value}. Moreover, the sum of the ramification numbers of the preimages of any point in $\mathbb{S}^2$ is constant, equal to the degree of the branched covering. In other words,  the ramification numbers of the preimages of any point $x$ in $\mathbb{S}^2$ form an integer partition of $d$. And this partition is not the trivial partition $(1,\ldots, 1)$ if and only if $x$ is a critical value.

So, let $\pi$ be a branched covering $\pi$ of degree $d$, then we can associate to $\pi$ two combinatorial properties:
\begin{itemize}
\item the (unordered) list $l=(a_1,\ldots,a_m)$ of ramification numbers at the critical points, with $2 \leq a_i \leq d$ for every $i$;
\item the \textbf{branch datum} $\mathcal{D} = [\Pi_1, \ldots , \Pi_n]$, with each $\Pi_i$ a non trivial integer partition of $d$, representing the collection of ramification numbers of the preimages of the $i$-th critical value. 
\end{itemize}

Here, the integers $m$ and $n$ are respectively the number of critical points and the number of critical values of $\pi$. Notice also that the branch datum incorporates the information of all the other properties of $\pi$. For example, consider the application $f: \hat{\mathbb{C}} \rightarrow \hat{\mathbb{C}}$ defined by:
\begin{displaymath}
f(x) = \frac{(x^2-1)^2}{x^3}.
\end{displaymath}
This function defines a branched covering of the sphere of degree $4$ with $5$ critical points ($0$, $\pm 1$, $\pm i$), $4$ critical values ($0$, $\pm 4i$, $\infty$), its list of ramification numbers is $(3,2,2,2,2)$ and its branched datum is $[[3,1],[2,2],[2,1,1],[2,1,1]]$.

Our focus will be on the {\bf realization problem} of these combinatorial properties. It can be expressed as follows:

 \bigskip
 
{\noindent\bf Realization problem} {\em Consider an integer $d\ge 2$. Given a list $l=(a_1, \ldots , a_m)$ of integers, with $2 \leq a_i \leq d$, or a list $\mathcal{D}$ of non-trivial integer partition of $d$, can it be realized by a branched covering of $\mathbb{S}^2$ of degree $d$?}

 \bigskip
 
 This problem, in particular the realization problem of an abstract branch datum, is generally called the {\bf Hurwitz problem}. There exists an important necessary condition, called the \textbf{Riemann-Hurwitz condition} (we will reprove it along the way). 

Let  $\Pi_i = [k_1, \ldots ,k_{n_i}]$ be an integer partition of $d$. We define the \textbf{weight} of  $\Pi_i$, denoted $\nu(\Pi_i)$, by
\begin{displaymath}
\nu(\Pi_i) = \sum_{j=1}^{n_i}{(k_j-1)}.
\end{displaymath}
We define the \textbf{branched weight} (or total weight) of a list  $\mathcal{D}=[\Pi_1, \ldots , \Pi_n]$ of such partitions by
\begin{displaymath}
\nu(\mathcal{D}  = \sum_{i=1}^{n}{\nu(\Pi_i)}.
\end{displaymath} 

{\noindent\textbf{Riemann-Hurwitz condition} {\em If a list  $\mathcal{D}=[\Pi_1, \ldots , \Pi_n]$ of integer partitions of $d$ is the branch datum of a branched covering of degree $d$, then
\begin{equation}
\label{HRcond2}
\nu(\mathcal{D}) = 2d-2.
\end{equation}
}

We can also rewrite this condition for the list of ramification numbers:

{\noindent\textbf{Riemann-Hurwitz condition} {\em If a list of integers $l=(a_1,\ldots,a_m)$ is the list of ramification numbers at the critical points of a branched covering of degree $d$, then $2\le a_i\le d$ for every $i$ and $\sum_i (a_i-1)=2d-2$.
}

\vspace{3mm}
Notice that the numbers $m$ of critical points, and $n$ of critical values of a branched covering of degree $d\ge 2$ satisfy $2\le n\le m \le 2d-2$.

A list that satisfies the Riemann-Hurwitz condition will be called a \textbf{ramification distribution of degree $d$}, and a list $\mathcal D$ of integer partitions of $d$ that satisfies the Riemann-Hurwitz condition is generally called a \textbf{passport} in the literature. The classical Hurwitz problem can then be expressed as follows: which passport is realized? 

There is a huge literature on this problem. Many partial results have been obtained, using  either the initial approach of Hurwitz, see for example \cite{EKS} or \cite{Ger}), or using a more geometric approach, see for example \cite{Bar} or \cite{Zheng}. Nevertheless,  this problem remains largely open till today.

Following an initial work of W. Thurston, we will introduce a new set of combinatorial properties, namely a bipartite map and its incident matrix, all incorporating the information about the ramification numbers at the critical points. We then solve completely their realization problem.

\subsection*{Thurston's geometrical approach}
\label{secthu}

During the autumn 2010, W. Thurston conducted a group discussion through e-mails on what he called the shapes of rational maps.  Along the way, he established a beautiful result, which becomes the starting point of our present study. Here is an account of his statement. For a presentation of the initial proof of W. Thurston, 
the reader can refer to the manuscript of Sarah Koch and Tan Lei \cite{Tan}.

\vspace{2mm}

Consider $\pi : \mathbb{S}^2 \rightarrow \mathbb{S}^2$ a {\bf generic branched covering} of the sphere of degree $d$, i.e. $\pi$ has exactly $2d-2$ distinct critical values. Then, pick a Jordan curve $\Sigma$ running through the critical values. For example, for a rational map with all critical values real, one may pick  the real axis union the point at infinity as $\Sigma$. We will consider $\Sigma$ as a planar map (i.e. a connected graph drawn on the sphere) with $2d-2$ vertices, each of valence $2$.

Now, pulling back $\Sigma$ by $\pi$, we get a new planar map  $\pi^{-1}(\Sigma)$. Forgetting the $2$-valence vertices of $\pi^{-1}(\Sigma)$, we get a further planar map $\Gamma$ with $2d-2$ vertices (the critical points of $\pi$), all of valence $4$. We call $\Gamma$ the underlying $4$-valent map of $\pi^{-1}(\Sigma)$. This is the combinatorial object we will be focused on. 
Thurston solved  its realization problem as follows:

\begin{ththurston}
\label{ththurston}
(Thurston, \cite{Tan}) A $4$-valent planar map $\Gamma$ is realized by a generic branched covering $\pi$ as   the underlying map of  $\pi^{-1}(\Sigma)$ for some choice of  $\Sigma$, if and only if it satisfies the following \textbf{balanced} condition:
\begin{enumerate}
\item[1.] \textbf{(global balance)} In an alternative coloring of the complementary faces of $\Gamma$, there are equal numbers of white and black faces.
\item[2.] \textbf{(local balance)} For any oriented simple closed curve drawn on the map that keeps black faces on the left and white faces on the right (except at the corners), there are strictly more black faces than white faces on the left side.
\end{enumerate}
\end{ththurston}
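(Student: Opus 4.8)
The plan is to analyze both implications through the two complementary disks $B$ (black) and $W$ (white) of $\mathbb{S}^2\setminus\Sigma$.

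\emph{Necessity.} Suppose $\Gamma$ is the underlying $4$-valent map of $\pi^{-1}(\Sigma)$. The faces of $\Gamma$ are exactly the faces of $\pi^{-1}(\Sigma)$, and over $\mathbb{S}^2\setminus\Sigma$ the map $\pi$ is a trivial $d$-sheeted covering of each of the two simply connected pieces; hence every face of $\Gamma$ is mapped homeomorphically onto $B$ or onto $W$, the induced $2$-coloring is the alternating one, and counting $\pi$-preimages of a point of $B$ and of a point of $W$ gives $d$ black and $d$ white faces, which is condition~1. For condition~2, let $\gamma$ be an oriented simple closed curve drawn on $\Gamma$ with black faces on the left and $R$ the closed disk it bounds on the left; each face of $\Gamma$ lies entirely inside or entirely outside $R$, and those meeting $\gamma$ along an edge are black. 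Write $b$ (resp.\ $w$) for the number of black (resp.\ white) faces contained in $R$. The key local observation is that keeping black strictly on the left forces $\gamma$ to \emph{turn} at each $4$-valent vertex it passes through, i.e.\ to leave along an edge adjacent in the cyclic order to the one by which it came, never along the opposite one; since $\pi$ is modeled on $z\mapsto z^2$ near such a vertex, the four incident edges alternate between the two half-edges of $\Sigma$ at the corresponding critical value, so two adjacent ones project to the \emph{two distinct} half-edges and $\pi\circ\gamma$ runs straight through that critical value without reversing. Consequently $\pi\circ\gamma$ is a nonconstant immersed loop in $\Sigma\cong S^1$ traversing every edge of $\Sigma$ only in the direction of $\partial B$, so its class in $H_1(\Sigma)\cong\mathbb{Z}$ is an integer $k\ge 1$. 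On the other hand each black (resp.\ white) face in $R$ maps homeomorphically onto $B$ (resp.\ $W$), so the degree of $\pi|_R\colon (R,\gamma)\to(\mathbb{S}^2,\Sigma)$ over $B$ is $b$ and over $W$ is $w$; in $H_2(\mathbb{S}^2,\Sigma)=\mathbb{Z}[\bar{B}]\oplus\mathbb{Z}[\bar{W}]$ this reads $\pi_*[R,\gamma]=b\,[\bar{B}]+w\,[\bar{W}]$, and applying the connecting homomorphism to $H_1(\Sigma)$ yields $b-w=k\ge1$, which is condition~2.

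\emph{Sufficiency: reduction to a combinatorial problem.} Now assume $\Gamma$ satisfies both conditions; by condition~1 and Euler's formula it has $d$ black faces, $d$ white faces and $2d-2$ vertices. Fix in the target an auxiliary Jordan curve $\Sigma_0$ carrying $2d-2$ cyclically ordered marked points $c_1,\dots,c_{2d-2}$ and $2d-2$ edges, separating $\mathbb{S}^2$ into $B$ and $W$. A generic degree-$d$ branched cover of $\mathbb{S}^2$ with critical values $\{c_i\}$ is obtained by gluing $d$ copies of $\overline{B}$ and $d$ copies of $\overline{W}$ along their boundary edges according to $2d-2$ perfect matchings $M_1,\dots,M_{2d-2}$ between the black and white sheets, $M_i$ recording the gluing over the edge $c_ic_{i+1}$; the result is a closed surface of Euler characteristic $2$, hence a sphere once it is connected, and the point over $c_i$ is a simple critical point precisely when $M_{i-1}$ and $M_i$ differ by a single transposition. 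A routine inspection then identifies the underlying $4$-valent map $\Gamma'$ of $\pi^{-1}(\Sigma_0)$: its black (white) faces are the black (white) sheets, its $i$-th vertex carries the two black sheets and the two white sheets exchanged at step $i$, and the cyclic boundary word of each face is read off from the matchings. Thus it suffices to produce a cyclic sequence of $2d-2$ perfect matchings, any two consecutive ones differing by one transposition, with connected gluing, whose associated colored planar map is the prescribed $\Gamma$.

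\emph{Sufficiency: the crux.} Constructing this matching sequence is the heart of the matter and the place where condition~2 is indispensable; I would argue by induction on $d$, the base $d=2$ being realized by $z\mapsto z^2$. For the inductive step one must locate a \emph{reducible configuration} in $\Gamma$ — a black or white face together with an adjacent face that can be cancelled so as to produce a balanced $4$-valent map $\Gamma'$ of degree $d-1$ — realize $\Gamma'$ by induction (choosing the cyclic positions of the $c_i$ to make the cancellation admissible), and reinsert the removed sheet to recover $\Gamma$. Condition~2 is exactly what guarantees that a reducible configuration always exists: were none available, one could trace a simple closed curve with black faces on the left enclosing a family of faces with at least as many white as black faces, contradicting it. I expect this to be the main obstacle: making "reducible configuration" precise is delicate, since a balanced map need not contain a bigon, so one must either first perform edge slides to create the needed configuration or work directly at the level of matching sequences, using a Hall-type or network-flow selection argument in which condition~2 supplies the cut inequality; one then has to verify that the reduced map is again balanced and that the reinsertion is unambiguous, and that the final branched cover is indeed generic (the $c_i$ are $2d-2$ distinct critical values).
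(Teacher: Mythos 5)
Your necessity argument is essentially sound: each face of $\pi^{-1}(\Sigma)$ maps homeomorphically onto one of the two complementary disks, which gives the equal count $d$ black and $d$ white faces, and the observation that a curve keeping black on the left must turn at every vertex it crosses, combined with the degree/winding computation, does yield $b-w\ge 1$. The genuine gap is in the sufficiency direction, which is where the entire content of the theorem lies, and your proposal stops exactly there. You reduce realization to producing a cyclic sequence of $2d-2$ perfect matchings between the $d$ black and $d$ white sheets, consecutive ones differing by a transposition, and then propose an induction on $d$ driven by a ``reducible configuration''. But you never define that notion, never prove one exists, and the one-sentence justification (``were none available, one could trace a simple closed curve with black faces on the left enclosing at least as many white as black faces'') is precisely the implication that needs proof: a balanced $4$-valent map need not contain a bigon or any visibly cancellable pair of faces, and it is not explained how the absence of whatever configuration you intend to remove produces such a curve. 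Moreover the inductive step silently requires that the reduced map is again balanced and connected, that the inductive realization can be chosen with the critical values in cyclic positions compatible with reinserting the removed sheet, and that the final covering is generic; none of this is addressed. As written, the ``crux'' paragraph is a restatement of the difficulty rather than an argument, and you acknowledge as much.

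The route taken here (Thurston's, cf.\ \cite{Tan}, generalized in Theorem \ref{thbalmap}) needs no induction: sufficiency is converted into one global matching problem. One places $2d-2$ marked points in each complementary face (in the bipartite-skeleton version, enough white dots so that every face reaches the common degree), removes one per incident corner, and seeks a perfect matching pairing the remaining points of each face with the residual capacity of adjacent faces (equivalently, of black vertices). Hall's marriage theorem (Theorem \ref{thhall}) then applies, and the marriage condition for an arbitrary family of faces is exactly what the global and local balance conditions deliver, via the counts of Theorem \ref{thbalmaps} and Corollary \ref{corconv}; the resulting completed regular map is realized as $\pi^{-1}(\Sigma)$ through the labeling argument of Proposition \ref{proplabel}. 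If you want to repair your write-up, the natural fix is to abandon the induction and carry out this Hall-type argument (which you mention only as a possible fallback): specify the bipartite incidence structure on which the matching lives, and derive the Hall cut inequality from the local balance condition applied to the submap spanned by the chosen faces.
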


The key idea to prove this theorem is to translate the realization problem into finding a pattern of dots so that each face is incident to exactly $2d-2$ vertices, and then into a matching problem on the map. More exactly, consider a $4$-valent planar map $\Gamma$ and put $2d-2$ men in each black face, and $2d-2$ women in each white face. Then, for each face, remove one person per vertex incident to this face. Each remaining person is trying to find a partner from one of the neighboring faces.

Thurston proves that the usual marriage criterion (see Theorem \ref{thhall}) can be reduced to the global and local balance conditions in Theorem \ref{ththurston}. An interested reader may compare the geometrical point of view of Thurston to that of Baranski in \cite{Bar}. 

\subsection*{Our contributions}

We will conduct a self-contained study, similar to that of Thurston, for all branched coverings. In a way,
we recover Thurston's result in the generic case, with a similar idea of proofs, but with a different presentation. 

For our purpose, we find it convenient to pull back a different map than a Jordan curve running through 
the critical values. 

Let $\pi: \mathbb{S}^2 \to \mathbb{S}^2$ be a branched covering of degree $d\ge 2$. 
Color white each of its critical values. Pick a non critical value point and color it black. 
Pick a collection of pairwise disjoint 'legs' connecting the black vertex to each white vertex. We obtain
a bipartite map $T$, i.e\@. a planar map whose edges are connected to a white and a black vertex. 

Now pulling back by $\pi$ the map $T$ and then erasing every  1-valence white vertex together with
its unique incident edge, we obtain a bipartite map $G$, called a {\bf skeleton} of $\pi$. This is the first  combinatorial object we will be focused on. We will establish the following realization results:

{\noindent \bf Theorem A} {\em A planar bipartite map $G$ is realized as a skeleton of a branched covering if and only if it satisfies the following balanced condition:
\begin{itemize}
\item (\textbf{global balance condition}) $G$ has as many black vertices as faces.
\item (\textbf{local balance condition}) Each submap $H$ of $G$ containing at least one black vertex has at least as many black vertices as faces. 
\end{itemize}}

A bipartite map, or more generally a bipartite graph (see definitions below) induces a white-to-black incident matrix $A=(a_{ij})$ as follows: numerate separately the black vertices and the white ones, and set $a_{ij}=k$ if there are $k$ edges connecting the $i$-th white vertex to the $j$-th black vertex. Our next realization result can be stated as follows:

\bigskip

{\noindent \bf Theorem B} {\em Let $d\ge 2$ be an integer. Pick an integer $1\le m\le 2d-2$ and an $m\times d$ matrix $A=(a_{ij})$. Then $A$ is realized as the incident matrix of a skeleton of a degree $d$ branched covering if and only if it satisfies the following conditions:
\begin{itemize}
\item the bipartite graph associated to $A$ is connected and admits an embedding into the plane.
\item (\textbf{global balance condition}) The column vector $(b_i)$ obtained by summing up the columns of $A$ satisfies $\sum_i (b_i-1)= 2d -2$.
\item (\textbf{local balance condition}) For any integer $1\le k< d$, and any choice of $k$ distinct columns of $A$, the column vector $(c_i)$ obtained by summing up the selected columns satisfies $\sum_i (c_i-1)\le 2k -2$.
\end{itemize}}

Notice that these conditions on the matrix $A$ are invariant under permutations of lines and columns of $A$, so do not depend on the numerations of the vertices of its bipartite graph. As an easy consequence, we obtain a solution of one of our initial realization problem:

\bigskip

{\noindent \bf Theorem C} {\em Let $d\ge 2$ be an integer. Pick a list $l=(a_1, \ldots, a_m)$ of integers, with $2 \leq a_i \leq d$ for every $i$. Then $l$ is realized as the list of  critical ramification numbers of a degree $d$ branched covering if and only if it satisfies the following condition:
\begin{itemize}
\item (\textbf{global balance condition}) $\sum_i (a_i-1)=2d-2$.
\end{itemize}}

The necessity comes from the Riemann-Hurwitz condition. It is interesting to notice that there is no local condition for this last realization result. In particular, this result supposes that the construction of a branched datum from a list of integers is hidden in this local condition. For example, consider the list $(3,2,2,2,2)$, then this list satisfies the Riemann-Hurwitz condition for $d=4$, and the realized passports we can construct from this list are $[[3,1],[2,1],[2,1],[2,1],[2,1]]$ or $[[3,1],[2,2],[2,1],[2,1]]$, but the list $[[3,1],[2,2],[2,2]]$ is not a realized passport. 

Here appears another interesting question: consider a ramification distribution of degree $d$, for a given integer $d$. What is the minimal number of partition of $d$ we need to construct a realized passport? In our example, this number is equal to $4$. The problem of minimality is a difficult problem, and will be considered in future project. The interested reader can still find an equivalent problem in (\cite{Tomth},p.101).

\subsection*{Contents}

The structure of this article is based on the following schema. After introducing all the tools necessary for the understanding and the construction of the combinatorics used here, we begin the third section by the study of the branched coverings of $\mathbb{S}^2$ to itself, leading to a classification up to topological equivalence. The starting point of this work is the characterization of Thurston we mentioned earlier. Then, we give a new characterization of the balanced maps that allow us to ``extend'' this notion on the set of graphs. 

We finish this section by proving one of the main result of this article (Theorem A) which is a generalization of the result of Thurston (Theorem \ref{ththurston}) that proves there exists a bijection between what we call a skeleton of a branched covering, and the set of balanced maps. 

In the last section, we come back to the Hurwitz problem mentioned earlier. More exactly, after giving a relation between the global balanced condition on our maps and the Riemann-Hurwitz condition, we establish a matrix interpretation of the balanced conditions defined in the third section (Theorem B), we finally prove that a ramification distribution of degree $d$ always comes from a branched covering of degree $d$ (Theorem C).

\section{Combinatorial tools}

\subsection{Increasing bipartite maps}

The purpose of this subsection is to introduce the main combinatorial objects, and some constructions that will be useful in the following. As there is many manner to introduce planar maps, we begin this subsection by introducing the main definition in order to fix the notion and notation we use in this article. The approach chosen here comes from the books of C. Berge \cite{berge}, W. T. Tutte \cite{tutbook}, P. Flajolet and R. Sedgewick \cite{FS}, or again J. L. Gross and T. W. Tucker \cite{tucker}.

\begin{defgraph}
A graph $G$ is given by two finite sets $\mathcal{E}_G$ and $\mathcal{V}_G$ and an application $\rho_G$ from $\mathcal{E}_G$ to $\mathcal{V}_G \times \mathcal{V}_G$. 
\end{defgraph}

An element of $\mathcal{V}_G$ (resp. $\mathcal{E}_G$) is called a \textbf{vertex} (resp. an \textbf{edge}) of $G$. For each edge $a$ of $\mathcal{E}_G$ is associated a couple of vertices $\rho(a)$ forming the \textbf{tips} of $a$. We say that two vertices of $G$ are \textbf{adjacent} if they are the tips of an edge. \newline
An edge whose tips are the same is called a \textbf{loop} of $G$. Finally, we say that an edge $a$ is \textbf{incident} to a vertex $v$ if $v$ is one of the two tips of $a$. In this case, we say also that $v$ is incident to $a$. Particularly, a loop of tip $v$ is doubly incident to the vertex $v$. \newline
The \textbf{degree} of a vertex $v$ of $G$ is the number of edges of $G$ being incident to $v$, counted with multiplicity. Particularly, if $G$ is a graph with $n$ edges, the sum of the degree of all the vertices of $G$ is equal to $2n$, i.e the degrees of the vertices give a partition of $2n$.

Notice that this definition of graph is a combinatorial definition, but we often consider in the following a graph as a topological object, i.e a set of points (the vertices) link by some line (the edges). 

\begin{defisograph}
Let $G$ and $H$ be two graphs. We say $G$ and $H$ are isomorphic if there exists a bijection $\phi : \mathcal{V}_G \rightarrow \mathcal{V}_H$ and a bijection $\psi : \mathcal{E}_G \rightarrow \mathcal{E}_H$ such that $\rho_H \circ \phi = \psi \circ \rho_G$. In other words, the bijections $\phi$ and $\psi$ preserve the incident relation.
\end{defisograph}

This relation between graphs defines an equivalence relation. In the following, graphs are always considered up to this equivalence relation. Now, we can define the notion of map. \newline
We call \textbf{embedding} of a graph $G$ a repreasentation of $G$ in a surface such that the edges of $G$ only have intersection on the vetices of $G$. Notice that such an embedding is not always possible in a given surface (see "problème des trois ponts" for example). If a graph admits such an embedding on the Riemann sphere, or equivalently on the plane, we say that $G$ is a \textbf{planar graph}. The embedding of a graph $G$ on a surface divise this surface into some connected components called \textbf{faces}. An embedding is called \textbf{cellular} if each face is simply connected.

\begin{defplanarmap}
A \textbf{planar map} is a cellular embedding of a graph $G$ in the Riemann sphere.
\end{defplanarmap}

In particular, a planar map is connected. As for the graph, we can define some incident relation between vertices-faces, vertices-edges and edges-faces. Notice that an edge is incident to exactly two faces (eventually the same) whereas a vertex can be incident to an arbitrary number of faces. 

For convenience, we draw a planar map on the plane and not on the sphere, but this drawing is only a representation of the planar map. In fact, by projecting the sphere on the plane via a stereographic projection, we send a point of the sphere to infinity, so depending of the choice of the projection we obtain different representations of the same planar map. 

\begin{defisomap}
Let $G$ and $H$ be two planar maps. We say that $G$ and $H$ are isomorphic if the vertices, edges and faces of one map can be sent bijectively on the vertices, edges and faces of the second one, preserving the three incident relations.
\end{defisomap}

Once again, this relation define an equivalence relation and so from now planar maps are considered up to this equivalence relation. We can now define the notion of submap.

\begin{defsubgraph}
Let $G$ be a planar map. We say that $H$ is a \textbf{submap} of $G$, denoted by $H \subset G$, if $H$ is itself a planar map and $H$ can be obtained by erasing some edges and vertices of $G$. Moreover, we say that $H$ is a \textbf{full submap} of $G$, denoted by $H < G$, if $H$ is a submap of $G$ and there exists at most one face $f$ of $H$ such that every edge of $G$ not in $H$ is contained in $f$.
\end{defsubgraph}

By convention, $G$ itself is a (full) submap of $G$. Note that if $H$ is a submap of $G$, then the set of vertices, resp. edges, of $H$ is a subset of those of $G$, but a face of $H$ either coincides with a face of $G$ or contains several faces of $G$. The reader can see examples of full submaps in Figure \ref{figcsubmap}.

This terminology of full submap is motivated by the notion of planar full continua namely compact sets that are connected with a connected complement in the $2$-sphere.

\begin{defbipmap}
\label{defbipmap}
Let $G$ be a planar map with at least two vertices. We say that $G$ is \textbf{(black and white) bipartite} if each vertex of $G$ is colored in either white or black and each edge of $G$ has a black and a white tip (in particular a bipartite map has no loop).

A \textbf{tree} is a bipartite map with a unique face.

A \textbf{star-like} map is a bipartite tree with a unique black vertex.
\end{defbipmap}

\begin{figure}[ht]
\begin{center}
\psscalebox{0.8 0.8} 
{
\begin{pspicture}(0,-1.8996261)(5.199252,1.8996261)
\psdots[linecolor=black, dotsize=0.2](0.09855774,0.20106842)
\psdots[linecolor=black, dotsize=0.2](3.4985578,1.8010684)
\psdots[linecolor=black, dotsize=0.2](3.4985578,0.20106842)
\psdots[linecolor=black, dotsize=0.2](4.2985578,-1.3989316)
\psline[linecolor=black, linewidth=0.04](0.09855774,0.20106842)(1.6985577,0.8010684)
\psline[linecolor=black, linewidth=0.04](1.6985577,0.8010684)(3.4985578,1.8010684)
\psline[linecolor=black, linewidth=0.04](3.4985578,1.8010684)(5.098558,0.60106844)
\psline[linecolor=black, linewidth=0.04](5.098558,0.60106844)(3.4985578,0.20106842)
\psline[linecolor=black, linewidth=0.04](3.4985578,0.20106842)(1.6985577,0.8010684)
\psline[linecolor=black, linewidth=0.04](0.09855774,0.20106842)(1.2985578,-1.7989316)
\psline[linecolor=black, linewidth=0.04](1.2985578,-1.7989316)(4.2985578,-1.3989316)
\psline[linecolor=black, linewidth=0.04](4.2985578,-1.3989316)(3.2985578,-0.7989316)
\psline[linecolor=black, linewidth=0.04](3.2985578,-0.7989316)(3.4985578,0.20106842)
\psline[linecolor=black, linewidth=0.04](5.098558,0.60106844)(4.2985578,-1.3989316)
\psdots[linecolor=black, dotstyle=o, dotsize=0.2](3.2985578,-0.7989316)
\psdots[linecolor=black, dotstyle=o, dotsize=0.2](5.098558,0.60106844)
\psdots[linecolor=black, dotstyle=o, dotsize=0.2](1.6985577,0.8010684)
\psdots[linecolor=black, dotstyle=o, dotsize=0.2](1.2985578,-1.7989316)
\end{pspicture}
}
\end{center}
\caption{Example of a bipartite map $G$}
\label{figbipmap}
\end{figure}
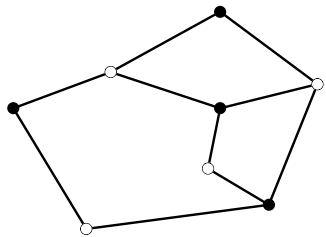

Figure \ref{figbipmap} shows an example of bipartite map. Moreover, there exists a sufficient and necessary condition such that a given planar map is bipartite:

\begin{propcnsbip}
A planar map $G$ can be made bipartite if and only if each face of $G$ is incident to an even number of edges, counted with multiplicity.
\end{propcnsbip}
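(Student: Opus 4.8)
The plan is to establish the two implications separately: the forward one is a direct inspection of a face boundary, while the converse rests on a planar counting argument combined with the classical characterization of bipartiteness by the absence of odd cycles.

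For the necessity (a bipartite $G$ has all faces of even degree), fix a black/white coloring of the vertices realizing the bipartite structure, together with a face $f$ of $G$. Since the embedding is cellular, the boundary of $f$ is a closed walk $v_0,e_1,v_1,\dots,e_k,v_k=v_0$ whose length $k$ equals the number of edges incident to $f$ counted with multiplicity (an edge having $f$ on both sides is traversed twice). Because every edge joins a black and a white vertex, the colors of $v_0,v_1,\dots,v_k$ alternate along the walk, and as $v_k=v_0$ has the color of $v_0$, the number of steps $k$ is even. Hence every face of $G$ has even degree.

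For the sufficiency I would show that if every face of $G$ has even degree then $G$ has no cycle of odd length (and, as a special case, no loop); since a planar map is connected, the absence of odd cycles is precisely the condition for $G$ to admit a black/white coloring of its vertices, i.e.\@ to be made bipartite (invoking the standard fact for connected graphs, or building the coloring from distances to a fixed vertex). The heart of the matter is a counting identity. Let $C$ be a cycle of $G$; drawn on the sphere it is a simple closed curve bounding two closed disks, and we let $F_{\mathrm{in}}$ be the set of faces of $G$ contained in the open interior of one of them, and $e_{\mathrm{in}}$ the number of edges of $G$ lying strictly inside that region. Summing face degrees over $F_{\mathrm{in}}$, an edge strictly inside $C$ contributes $2$ (both of its sides border faces lying inside $C$), an edge of $C$ contributes $1$ (its two sides lie in the two distinct regions cut out by $C$), and every other edge contributes $0$, whence
\begin{displaymath}
\sum_{f \in F_{\mathrm{in}}} \deg(f) \;=\; 2\,e_{\mathrm{in}} + |C|.
\end{displaymath}
The hypothesis makes the left-hand side even, so $|C|$ is even. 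Applied to a hypothetical loop, viewed as a cycle of length $1$, this is already a contradiction, so $G$ has no loop; applied to an arbitrary cycle, it shows all cycles of $G$ have even length, so $G$ is bipartite.

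The one point requiring care — and the only genuine obstacle — is the geometric bookkeeping in the identity: one must check that a face incident to an edge lying strictly inside $C$ is itself entirely contained in the interior of $C$, and that the two faces flanking an edge of $C$ are genuinely distinct with one inside and one outside. Both follow from the fact that a face of $G$ is connected and disjoint from the edge set, hence from the simple closed curve $C$, so it cannot meet both regions that $C$ separates; this is the form of the Jordan curve theorem on the sphere that is used freely throughout the paper. Modulo this topological input the argument is elementary, and degenerate situations (such as a map with a single vertex) are excluded by the standing requirement that a bipartite map have at least two vertices, cf.\@ Definition \ref{defbipmap}.
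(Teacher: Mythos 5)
Your proof is correct and is essentially the argument the paper has in mind: the paper only sketches this proposition (necessity is declared clear, sufficiency is "left to the reader" with a pointer to the method of Proposition \ref{proplabel}), and your identity $\sum_{f\in F_{\mathrm{in}}}\deg(f)=2e_{\mathrm{in}}+|C|$ is exactly that face-summing idea specialized to parity. The only cosmetic difference is that you conclude via "no odd cycles" plus the standard bipartiteness criterion, whereas the paper's suggested route would construct the $2$-coloring directly as a distance label modulo $2$ whose well-definedness is checked by the same count; the two formulations are interchangeable.
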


This condition is clearly necessary. To prove the condition is sufficient, we can use an idea similar to that used later to prove Proposition \ref{proplabel} (details are left to the reader).

\vspace{2mm}

Bipartite maps will be used in the following to describe branched covering of the sphere, where black and white vertices play very different roles, that's why we need always to distinguish these two kind of vertices. In the following, we always use the following notations:

\begin{notation}
Let $G$ be a bipartite map. We denote by: 

\begin{itemize}
\item $\mathcal{V}_G$ the set of black vertices of $G$, and by $V_G$ the cardinal of $\mathcal{V}_G$.
\item $\mathcal{W}_G$ the set of white vertices of $G$, and by $W_G$ the cardinal of $\mathcal{W}_G$.
\item $\mathcal{E}_G$ the set of edges of $G$, and by $E_G$ the cardinal of $\mathcal{E}_G$.
\item $\mathcal{F}_G$ the set of faces of $G$, and by $F_G$ the cardinal of $\mathcal{F}_G$.
\end{itemize}
\end{notation}

To conclude these definitions, let's introduce the notion of increasing bipartite maps.

\begin{definbipmap}
We say that $G$ is a \textbf{increasing bipartite map} if $G$ is a bipartite map with a numbering on each white vertex such that for each black vertex of $G$, the labels of its adjacent (white) vertices appear in an increasing order counterclockwise.
\end{definbipmap}

Notice that an increasing bipartite map can not have multiple edges, i.e. two vertices are connected by at most one edge. There are some examples of increasing bipartite maps in Figure \ref{figinbipmap} and \ref{figrepresent}.

Now that we have introduced the general context in which we will work, we can focus a little more on notions and constructions specific to these maps. First, we define the notion of degree of a face.

\begin{defdegree}
Let $G$ be a bipartite map, and let $f$ be a face of $G$. The \textbf{degree} of $f$, denoted by $\deg_G(f)$, or simply $\deg(f)$, is equal to the number of white vertices on the boundary of $f$, counted with multiplicity.
\end{defdegree}

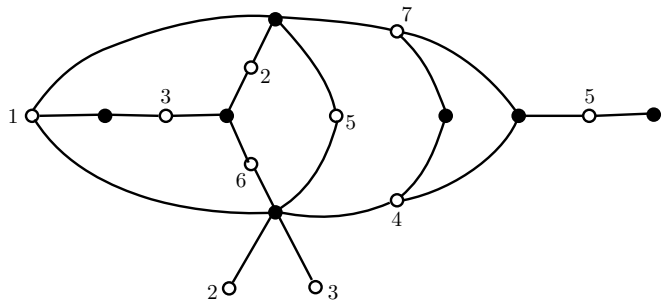
\begin{figure}[ht]
\begin{center}
\psscalebox{0.8 0.8} 
{
\begin{pspicture}(0,-2.4342053)(10.738269,2.4342053)
\pscircle[linecolor=black, linewidth=0.04, fillstyle=solid,fillcolor=black, dimen=outer](1.6,0.6760193){0.12}
\pscircle[linecolor=black, linewidth=0.04, fillstyle=solid,fillcolor=black, dimen=outer](3.6,0.6760193){0.12}
\pscircle[linecolor=black, linewidth=0.04, fillstyle=solid,fillcolor=black, dimen=outer](4.4,2.2760193){0.12}
\pscircle[linecolor=black, linewidth=0.04, fillstyle=solid,fillcolor=black, dimen=outer](4.4,-0.9239807){0.12}
\pscircle[linecolor=black, linewidth=0.04, fillstyle=solid,fillcolor=black, dimen=outer](7.2,0.6760193){0.12}
\pscircle[linecolor=black, linewidth=0.04, fillstyle=solid,fillcolor=black, dimen=outer](8.4,0.6760193){0.12}
\pscircle[linecolor=black, linewidth=0.04, dimen=outer](2.6,0.6760193){0.12}
\pscircle[linecolor=black, linewidth=0.04, dimen=outer](4.0,1.4760193){0.12}
\pscircle[linecolor=black, linewidth=0.04, dimen=outer](4.0,-0.123980716){0.12}
\pscircle[linecolor=black, linewidth=0.04, dimen=outer](5.4,0.6760193){0.12}
\pscircle[linecolor=black, linewidth=0.04, dimen=outer](6.4,2.0760193){0.12}
\pscircle[linecolor=black, linewidth=0.04, dimen=outer](6.4,-0.7239807){0.12}
\pscircle[linecolor=black, linewidth=0.04, dimen=outer](0.4,0.6760193){0.12}
\psline[linecolor=black, linewidth=0.04](1.6,0.6960193)(0.52,0.6760193)
\psline[linecolor=black, linewidth=0.04](1.58,0.6960193)(2.48,0.6760193)
\psline[linecolor=black, linewidth=0.04](3.58,0.6960193)(2.7,0.6760193)
\psline[linecolor=black, linewidth=0.04](3.62,0.7360193)(3.94,1.3960193)
\psline[linecolor=black, linewidth=0.04](4.38,2.2760193)(4.04,1.5760193)
\psline[linecolor=black, linewidth=0.04](4.4,-0.8839807)(4.06,-0.20398071)
\psline[linecolor=black, linewidth=0.04](3.62,0.6760193)(3.96,-0.0980713)
\psbezier[linecolor=black, linewidth=0.04](4.42,2.3160193)(3.4161134,2.41401)(2.4249623,2.0960782)(2.0,1.9360193)(1.5750377,1.7759606)(1.0211877,1.6472968)(0.42,0.7760193)
\psbezier[linecolor=black, linewidth=0.04](4.42,-0.9239807)(3.618616,-0.983528)(2.7621162,-0.8389971)(2.2633498,-0.6768202)(1.7645833,-0.5146433)(0.93212664,-0.17916456)(0.44,0.6160193)
\psbezier[linecolor=black, linewidth=0.04](4.4,2.2960193)(4.88,1.7760193)(5.28,1.2760193)(5.38,0.7760193)
\psbezier[linecolor=black, linewidth=0.04](4.4,-0.90398073)(4.86,-0.64398074)(5.22,-0.18398072)(5.42,0.59601927)
\psbezier[linecolor=black, linewidth=0.04](4.38,-0.90398073)(5.0,-1.0439807)(5.64,-1.0439807)(6.28,-0.7639807)
\psbezier[linecolor=black, linewidth=0.04](4.4,2.3160193)(5.0,2.2760193)(5.74,2.2360194)(6.32,2.0960193)
\psbezier[linecolor=black, linewidth=0.04](7.22,0.6960193)(7.0,1.2560192)(6.9,1.5760193)(6.44,2.0160193)
\psbezier[linecolor=black, linewidth=0.04](7.2,0.6560193)(7.04,0.17601928)(6.9,-0.28398073)(6.46,-0.6639807)
\psbezier[linecolor=black, linewidth=0.04](8.38,0.7160193)(7.9,1.4160193)(7.16,1.9360193)(6.5,2.0560193)
\psbezier[linecolor=black, linewidth=0.04](8.4,0.6760193)(8.22,0.116019286)(7.4,-0.5439807)(6.5,-0.7239807)
\rput[bl](0.0,0.5560193){$1$}
\rput(2.6,0.9960193){$3$}
\rput(4.24,1.3360193){$2$}
\rput(3.84,-0.38398072){$6$}
\rput(5.64,0.5760193){$5$}
\rput(6.4,-1.0439807){$4$}
\rput(6.56,2.3560193){$7$}
\pscircle[linecolor=black, linewidth=0.04, dimen=outer](3.64,-2.1839807){0.12}
\pscircle[linecolor=black, linewidth=0.04, dimen=outer](5.06,-2.1639807){0.12}
\psline[linecolor=black, linewidth=0.04](4.38,-0.90398073)(3.7,-2.0839808)
\psline[linecolor=black, linewidth=0.04](4.42,-0.9239807)(5.0,-2.0439806)
\rput[bl](3.28,-2.3639808){$2$}
\rput[bl](5.26,-2.3639808){$3$}
\pscircle[linecolor=black, linewidth=0.04, dimen=outer](9.56,0.6760193){0.12}
\pscircle[linecolor=black, linewidth=0.04, fillstyle=solid,fillcolor=black, dimen=outer](10.62,0.6960193){0.12}
\psline[linecolor=black, linewidth=0.04](8.42,0.6760193)(9.46,0.6760193)
\psline[linecolor=black, linewidth=0.04](10.62,0.7160193)(9.66,0.6760193)
\rput[bl](9.48,0.8960193){$5$}
\end{pspicture}
}
\end{center}
\caption{Example of an increasing bipartite map.}
\label{figinbipmap}
\end{figure}

For example, in Figure \ref{figinbipmap} the unbounded face has degree $7$ (the vertex labeled $5$ counts twice, whereas the vertices labeled $2$ and $3$ count only once). Notice that this definition of degree is not the usual one, but is more convenient in this context, the role of the white vertices being predominant in the following.

\begin{figure}[ht]
\begin{center}
\psscalebox{0.8 0.8} 
{
\begin{pspicture}(0,-2.8006945)(7.397115,2.8006945)
\psdots[linecolor=black, dotsize=0.2](1.0985577,2.5)
\psdots[linecolor=black, dotsize=0.2](4.8985577,2.5)
\psdots[linecolor=black, dotsize=0.2](3.0985577,0.30000007)
\psdots[linecolor=black, dotsize=0.2](4.0985575,-1.0999999)
\psdots[linecolor=black, dotsize=0.2](7.2985578,-1.3)
\psdots[linecolor=black, dotsize=0.2](2.2985578,-2.6999998)
\psdots[linecolor=black, dotsize=0.2](0.098557666,-0.8999999)
\psline[linecolor=black, linewidth=0.04](1.0985577,2.5)(0.29855767,1.1)
\psline[linecolor=black, linewidth=0.04](0.29855767,1.1)(0.098557666,-0.8999999)
\psline[linecolor=black, linewidth=0.04, linestyle=dashed, dash=0.17638889cm 0.10583334cm](0.098557666,-0.8999999)(0.6985577,-2.3)
\psline[linecolor=black, linewidth=0.04, linestyle=dashed, dash=0.17638889cm 0.10583334cm](0.6985577,-2.3)(2.2985578,-2.6999998)
\psline[linecolor=black, linewidth=0.04, linestyle=dashed, dash=0.17638889cm 0.10583334cm](2.2985578,-2.6999998)(5.2985578,-2.6999998)
\psline[linecolor=black, linewidth=0.04, linestyle=dashed, dash=0.17638889cm 0.10583334cm](5.2985578,-2.6999998)(7.2985578,-1.3)
\psline[linecolor=black, linewidth=0.04, linestyle=dashed, dash=0.17638889cm 0.10583334cm](7.2985578,-1.3)(6.698558,1.3000001)
\psline[linecolor=black, linewidth=0.04, linestyle=dashed, dash=0.17638889cm 0.10583334cm](6.698558,1.3000001)(4.8985577,2.5)
\psline[linecolor=black, linewidth=0.04](4.8985577,2.5)(3.0985577,2.7)
\psline[linecolor=black, linewidth=0.04](3.0985577,2.7)(1.0985577,2.5)
\psline[linecolor=black, linewidth=0.04, linestyle=dashed, dash=0.17638889cm 0.10583334cm](1.0985577,2.5)(2.6985576,1.7)
\psline[linecolor=black, linewidth=0.04, linestyle=dashed, dash=0.17638889cm 0.10583334cm](2.6985576,1.7)(3.0985577,0.30000007)
\psline[linecolor=black, linewidth=0.04](3.0985577,0.30000007)(2.2985578,-1.3)
\psline[linecolor=black, linewidth=0.04](2.2985578,-1.3)(0.098557666,-0.8999999)
\psline[linecolor=black, linewidth=0.04, linestyle=dashed, dash=0.17638889cm 0.10583334cm](2.2985578,-1.3)(2.2985578,-2.6999998)
\psline[linecolor=black, linewidth=0.04](2.2985578,-1.3)(4.0985575,-1.0999999)
\psline[linecolor=black, linewidth=0.04](4.0985575,-1.0999999)(5.2985578,0.100000076)
\psline[linecolor=black, linewidth=0.04, linestyle=dashed, dash=0.17638889cm 0.10583334cm](5.2985578,0.100000076)(7.2985578,-1.3)
\psline[linecolor=black, linewidth=0.04](5.2985578,0.100000076)(4.8985577,2.5)
\psline[linecolor=black, linewidth=0.04](4.8985577,2.5)(4.2985578,1.3000001)
\psline[linecolor=black, linewidth=0.04](4.2985578,1.3000001)(3.0985577,0.30000007)
\psline[linecolor=black, linewidth=0.04](3.0985577,0.30000007)(5.2985578,0.100000076)
\psline[linecolor=black, linewidth=0.04, linestyle=dashed, dash=0.17638889cm 0.10583334cm](2.6985576,1.7)(0.098557666,-0.8999999)
\psdots[linecolor=black, dotstyle=o, dotsize=0.2](3.0985577,2.7)
\psdots[linecolor=black, dotstyle=o, dotsize=0.2](0.29855767,1.1)
\psdots[linecolor=black, dotstyle=o, dotsize=0.2](4.2985578,1.3000001)
\psdots[linecolor=black, dotstyle=o, dotsize=0.2](2.6985576,1.7)
\psdots[linecolor=black, dotstyle=o, dotsize=0.2](0.6985577,-2.3)
\psdots[linecolor=black, dotstyle=o, dotsize=0.2](2.2985578,-1.3)
\psdots[linecolor=black, dotstyle=o, dotsize=0.2](5.2985578,0.100000076)
\psdots[linecolor=black, dotstyle=o, dotsize=0.2](6.698558,1.3000001)
\psdots[linecolor=black, dotstyle=o, dotsize=0.2](5.2985578,-2.6999998)
\end{pspicture}
}
\end{center}
\caption{Example of a non-full submap (solid edges) of a bipartite map (both solid and dashed edges).}
\label{figsubmap}
\end{figure}
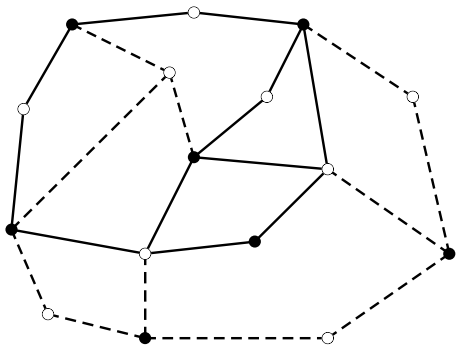

Then, we can give a relation between the degree of black (resp. white) vertices, the degree of faces and the number of edges of a bipartite map.

\begin{propdegbwvf}
\label{propdegbwvf}
Let $G$ be a bipartite map, then
\begin{displaymath}
\sum_{w \in \mathcal{W}_G}{\deg_G(w)} = \sum_{v \in \mathcal{V}_G}{\deg_G(v)} = \sum_{f \in \mathcal{F}_G}{\deg_G(f)} = E_G.
\end{displaymath}
\end{propdegbwvf}

\begin{proof}
As $G$ is bipartite, each edge of $G$ is incident to a unique black (resp. white) vertex, and so we deduce immediately the first relations:
\begin{displaymath}
\sum_{w \in \mathcal{W}_G}{\deg_G(w)} = E_G = \sum_{v \in \mathcal{V}_G}{\deg_G(v)}.
\end{displaymath}
To prove the remaining relation, it suffices to see that the degree of a face $f$ is equals to half of the number of edges incident to $f$, counted with multiplicity. Then, as each edge is incident to exactly two faces (maybe the same), we deduce the required relation.
\end{proof}

\textbf{Rk:} this property is also true if $G$ is a disjoint union of bipartite maps.

\subsubsection*{Union and intersection}

Let $H_1$ and $H_2$ be two submaps of a bipartite map $G$ having at least one common vertex. We define the union $H_1 \cup H_2$ of $H_1$ and $H_2$ to be the unique submap of $G$ constructed by erasing all the vertices, resp. edges, of $G$ that do not belong to $\mathcal{V}_{H_1} \cup \mathcal{V}_{H_2}$, resp. $\mathcal{E}_{H_1} \cup \mathcal{E}_{H_2}$. In particular, $H_1$, resp. $H_2$, is a submap of $H_1 \cup H_2$.

Similarly, we define the intersection $H_1 \cap H_2$ of $H_1$ and $H_2$ to be the submap (or the disjoint union of submaps) of $G$ constructed by erasing all the vertices, resp. edges, of $G$ that do not belong to $\mathcal{V}_{H_1} \cap \mathcal{V}_{H_2}$, resp. $\mathcal{E}_{H_1} \cap \mathcal{E}_{H_2}$. In particular, each connected component of $H_1 \cap H_2$ is a submap of both $H_1$ and $H_2$.

Finally, we give some relations between the number of faces and black vertices of these different maps. But first, we need to define these numbers. Consider two submaps $H_1$ and $H_2$ as previously, and denote by $L_1 , \ldots , L_n$ the connected components of $H_1 \cap H_2$. Then the number of faces of $H_1 \cap H_2$ is equal to
\begin{displaymath}
F_{H_1 \cap H_2} = \left( \sum_{i=1}^{n}{F_{L_i} - 1} \right) + 1.
\end{displaymath}
In other words, $F_{H_1 \cap H_2}$ is the number of connected components of $\mathbb{S}^2 \setminus (H_1 \cap H_2)$. Similarly, the number of black vertices of $H_1 \cap H_2$ is equal to
\begin{displaymath}
V_{H_1 \cap H_2} = \sum_{i=1}^{n}{V_{L_i}}.
\end{displaymath}

\begin{propdegunin}
\label{propdegunin}
Let $H_1$ and $H_2$ be two submaps of a bipartite map $G$, such that $H_1 \cap H_2$ is not empty. Then,
\begin{align*}
F_{H_1 \cup H_2} & = F_{H_1} + F_{H_2} - F_{H_1 \cap H_2} + (n-1) \\
V_{H_1 \cup H_2} & = V_{H_1} + V_{H_2} - V_{H_1 \cap H_2},
\end{align*}
where $n$ is equal to the number of connected components of $H_1 \cap H_2$.
\end{propdegunin}

The proof of this property is not difficult, and is left to the reader. Notice that this result can be generalized to the case where $H_1$ and $H_2$ are disjoint, i.e. $H_1 \cap H_2 = \emptyset$, by imposing that $F_{H_1 \cap H_2} = V_{H_1 \cap H_2} = 0$ and $n=0$. In particular, we can generalize the Euler's characteristic for a finite union of disjoint planar maps as follows.

\begin{theuler2}
\label{theuler2}
Let $G$ be the union of $n$ disjoint planar maps, for $n \geq 1$. Then
\begin{displaymath}
V_G + F_G = E_G + 2 + (n-1).
\end{displaymath}
\end{theuler2}

\subsubsection*{Complements}

Let $G$ be a bipartite map, and $H \subset G$, $H \neq G$. We consider the two following sets
\begin{itemize}
\item $\mathcal{M}_c(H)$ is the set of (disjoint union of) submap(s) $K$ of $G$ such that $H \cap K = \emptyset$.
\item $\mathcal{M}^c(H)$ is the set of (disjoint union of) submap(s) $K$ of $G$ such that $H \cup K = G$.
\end{itemize} 
Then, we define the \textbf{strict complement} of $H$, denoted $H_c$, by
\begin{displaymath}
H_c = \bigcup_{K \in \mathcal{M}_c(H)}{K}.
\end{displaymath}
In other words, $H_c$ is a disjoint union of submaps\footnote{$H_c$ can have multiple connected components} of $G$ obtained by erasing the vertices and edges of $H$, as well as the edges with at least one tip in $\mathcal{V}_H \cup \mathcal{W}_H$.

Similarly, we define the \textbf{relative complement} of $H$, denoted $H^c$, by
\begin{displaymath}
H^c = \bigcap_{K \in \mathcal{M}^c(H)}{K}.
\end{displaymath}
So, $H^c$ is the same as $H_c$ with the addition of the edges of $G$ not in $H$ but with at least one tip in $\mathcal{V}_H \cup \mathcal{W}_H$, and the tips of these edges  (see Figure \ref{figcsubmap} for an example). In particular, each connected component of $H_c$ is a submap of a connected component of $H^c$. 

\begin{figure}[ht]
\begin{center}
\psscalebox{0.8 0.8} 
{
\begin{pspicture}(0,-2.8006945)(7.397115,2.8006945)
\psdots[linecolor=black, dotsize=0.2](1.0985577,2.5)
\psdots[linecolor=black, dotsize=0.2](4.8985577,2.5)
\psdots[linecolor=black, dotsize=0.2](3.0985577,0.30000007)
\psdots[linecolor=black, dotsize=0.2](4.0985575,-1.0999999)
\psdots[linecolor=black, dotsize=0.2](7.2985578,-1.3)
\psdots[linecolor=black, dotsize=0.2](2.2985578,-2.6999998)
\psdots[linecolor=black, dotsize=0.2](0.098557666,-0.8999999)
\psline[linecolor=black, linewidth=0.04, linestyle=dashed, dash=0.17638889cm 0.10583334cm](1.0985577,2.5)(0.29855767,1.1)
\psline[linecolor=black, linewidth=0.04, linestyle=dashed, dash=0.17638889cm 0.10583334cm](0.29855767,1.1)(0.098557666,-0.8999999)
\psline[linecolor=black, linewidth=0.04](0.098557666,-0.8999999)(0.6985577,-2.3)
\psline[linecolor=black, linewidth=0.04](0.6985577,-2.3)(2.2985578,-2.6999998)
\psline[linecolor=black, linewidth=0.04](2.2985578,-2.6999998)(5.2985578,-2.6999998)
\psline[linecolor=black, linewidth=0.04, linestyle=dashed, dash=0.17638889cm 0.10583334cm](5.2985578,-2.6999998)(7.2985578,-1.3)
\psline[linecolor=black, linewidth=0.04, linestyle=dashed, dash=0.17638889cm 0.10583334cm](7.2985578,-1.3)(6.698558,1.3000001)
\psline[linecolor=black, linewidth=0.04, linestyle=dashed, dash=0.17638889cm 0.10583334cm](6.698558,1.3000001)(4.8985577,2.5)
\psline[linecolor=black, linewidth=0.04, linestyle=dashed, dash=0.17638889cm 0.10583334cm](4.8985577,2.5)(3.0985577,2.7)
\psline[linecolor=black, linewidth=0.04, linestyle=dashed, dash=0.17638889cm 0.10583334cm](3.0985577,2.7)(1.0985577,2.5)
\psline[linecolor=black, linewidth=0.04, linestyle=dashed, dash=0.17638889cm 0.10583334cm](1.0985577,2.5)(2.6985576,1.7)
\psline[linecolor=black, linewidth=0.04, linestyle=dashed, dash=0.17638889cm 0.10583334cm](2.6985576,1.7)(3.0985577,0.30000007)
\psline[linecolor=black, linewidth=0.04](3.0985577,0.30000007)(2.2985578,-1.3)
\psline[linecolor=black, linewidth=0.04](2.2985578,-1.3)(0.098557666,-0.8999999)
\psline[linecolor=black, linewidth=0.04](2.2985578,-1.3)(2.2985578,-2.6999998)
\psline[linecolor=black, linewidth=0.04](2.2985578,-1.3)(4.0985575,-1.0999999)
\psline[linecolor=black, linewidth=0.04](4.0985575,-1.0999999)(5.2985578,0.100000076)
\psline[linecolor=black, linewidth=0.04, linestyle=dashed, dash=0.17638889cm 0.10583334cm](5.2985578,0.100000076)(7.2985578,-1.3)
\psline[linecolor=black, linewidth=0.04](5.2985578,0.100000076)(4.8985577,2.5)
\psline[linecolor=black, linewidth=0.04](4.8985577,2.5)(4.2985578,1.3000001)
\psline[linecolor=black, linewidth=0.04](4.2985578,1.3000001)(3.0985577,0.30000007)
\psline[linecolor=black, linewidth=0.04](3.0985577,0.30000007)(5.2985578,0.100000076)
\psline[linecolor=black, linewidth=0.04, linestyle=dashed, dash=0.17638889cm 0.10583334cm](2.6985576,1.7)(0.098557666,-0.8999999)
\psdots[linecolor=black, dotstyle=o, dotsize=0.2](3.0985577,2.7)
\psdots[linecolor=black, dotstyle=o, dotsize=0.2](0.29855767,1.1)
\psdots[linecolor=black, dotstyle=o, dotsize=0.2](4.2985578,1.3000001)
\psdots[linecolor=black, dotstyle=o, dotsize=0.2](2.6985576,1.7)
\psdots[linecolor=black, dotstyle=o, dotsize=0.2](0.6985577,-2.3)
\psdots[linecolor=black, dotstyle=o, dotsize=0.2](2.2985578,-1.3)
\psdots[linecolor=black, dotstyle=o, dotsize=0.2](5.2985578,0.100000076)
\psdots[linecolor=black, dotstyle=o, dotsize=0.2](6.698558,1.3000001)
\psdots[linecolor=black, dotstyle=o, dotsize=0.2](5.2985578,-2.6999998)
\end{pspicture}
}
\end{center}
\caption{Example of a full submap (solid edges) of a bipartite map, and its relative complement (dashed edges).}
\label{figcsubmap}
\end{figure}
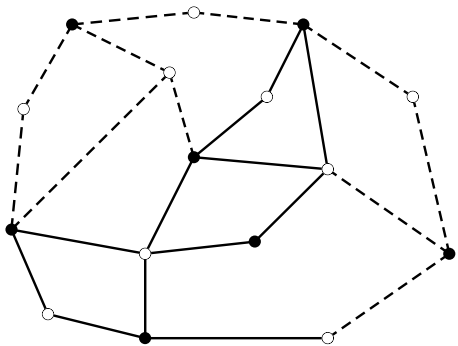

\begin{propconnconv}
\label{propconnconv}
Let $G$ be a bipartite map, and $H$ a submap of $G$. Suppose that $H$ is not a full submap of $G$, then there exists a finite number of full submaps $H_i$ of $G$ such that
\begin{displaymath}
\bigcap_{i}{H_i} = H \quad \text{and} \quad H_i \cup H_j = G \text{ for any pair } i \neq j.
\end{displaymath}
\end{propconnconv}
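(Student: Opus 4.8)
The plan is to construct the family $\{H_i\}$ explicitly from the faces of $H$ in which the ``missing'' part of $G$ lives. First I would enumerate the faces $f_1,\dots,f_k$ of $H$ that are \emph{not} faces of $G$; equivalently, those faces of $H$ whose interior contains at least one vertex or edge of $G$ not lying in $H$. Recall that the boundary of every face of $H$ is contained in $H$, so each edge of $G\setminus H$ has its interior entirely inside a single face of $H$; the hypothesis that $H$ is not a full submap of $G$ says precisely that $k\ge 2$.

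Next, for each $1\le i\le k$ I would set $H_i$ to be $H$ together with all the vertices and edges of $G$ lying in the faces $f_j$ with $j\ne i$; equivalently, $H_i$ is obtained from $G$ by erasing precisely those vertices and edges of $G\setminus H$ whose interior lies in $f_i$. The argument then breaks into three points. (i) $H_i$ is a planar map, hence a submap of $G$: it is connected because $G$ is, every newly added vertex or edge being joinable to $H$ by a path of $G$ that stays inside the face of $H$ containing it; and it is cellularly embedded because its faces are either $f_i$ itself --- a disk, left untouched since it is disjoint from the filled-in faces $f_j$ --- or faces lying inside some $f_j$ ($j\ne i$), which coincide with the faces of $G$ inside $f_j$ and hence are disks. (ii) $H_i$ is a \emph{full} submap of $G$: by (i) its only face that is not a face of $G$ is $f_i$, so all edges of $G$ not in $H_i$ lie in that single face, whence $H_i<G$. (iii) $\bigcap_{i=1}^{k} H_i=H$ and $H_i\cup H_j=G$ for $i\ne j$: any vertex or edge of $G\setminus H$ lives in a unique $f_\ell$, so it belongs to $H_i$ if and only if $i\ne\ell$; hence it is absent from $H_\ell$ (so the intersection is just $H$), while for $i\ne j$ it belongs to $H_i$ or to $H_j$ (so the union is all of $G$).

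I expect point (i) to be the only genuine difficulty: one must check that ``filling a face of $H$ with all the $G$-edges it contains'' really yields a planar map again --- connected, and with every face a disk --- and that the face $f_i$ survives intact. This uses the two topological facts that the boundary of every face of $H$ lies in $H$ and that $G$, being a planar map, is cellularly embedded so that each of its faces is a disk. Points (ii) and (iii) are then just bookkeeping with the index set $\{1,\dots,k\}$.
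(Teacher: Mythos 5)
Your construction is exactly the one in the paper: the author also enumerates the faces of $H$ that are not faces of $G$ and, for each such face $f$, takes $H_f$ to be $H$ together with all vertices and edges of $G$ contained in the other such faces, then observes that each $H_f$ is full and that the intersection and union identities follow. Your write-up simply supplies the connectivity/cellularity and bookkeeping details that the paper leaves to the reader, so it is correct and follows the same route.
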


The minimum number of full submaps $H_i$ we need to obtain the equality is called the \textbf{genus} of $H$. By convention, the genus of a full submap of $G$ is equal to $1$.

\begin{proof}
We denote by $\mathcal{F}$ the set of faces of $H$ not in $\mathcal{F}_G$. Then for each $f \in \mathcal{F}$, we construct a submap $K_f$ of $G$ by erasing each edge and vertex of $G$ except those contained in the face $f$ and its boundary. 
Then we construct the planar map $H_f$ as follows:
\begin{displaymath}
H_f = H \bigcup_{g \in \mathcal{F} \atop g \neq f}{K_{g}}.
\end{displaymath}
Clearly, $H$ is a submap of $H_f$, and $H_f$ is a full submap of $G$. The rest is a direct consequence of the construction of the submaps $H_f$, and is left to the reader.
\end{proof}

\subsection{Hall's marriage theorem}

Let $\mathcal{A}$ be a set of elements, and $\mathcal{S}$ a finite family of finite subsets $\{A_1 , A_2 , \ldots \}$ of $\mathcal{A}$.

\begin{defSDR}
$\mathcal{S}$ has a \textbf{system of distinct representatives} if there exists distinct elements $\{x_1 , x_2 , \ldots \}$ such that $x_i \in A_i$, for each $i=1,2, \ldots$

If furthermore $card(\mathcal{S}) = card \left( \bigcup A_i \right)$, we say that $\mathcal{S}$ has a \textbf{perfect matching}.
\end{defSDR}

Now, we have to determine a condition such that a family of finite sets has a system of distinct representatives.

\begin{defmarcond}
We say that $\mathcal{S}$ satisfies the \textbf{marriage condition} if the union of any $k$ of the sets $A_i$ contains at least $k$ elements for $k = 1,2, \ldots$
\end{defmarcond}

\begin{thhall}
\label{thhall}
(Hall, \cite{hall}) $\mathcal{S}$ has a system of distinct representatives if and only if $\mathcal{S}$ satisfies the marriage condition.
\end{thhall}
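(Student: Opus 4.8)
The plan is to prove the non-trivial implication (marriage condition $\Rightarrow$ existence of a system of distinct representatives) by induction on $n=\mathrm{card}(\mathcal{S})$, the converse being immediate: if $x_1,\ldots$ is an SDR, then for any choice of $k$ of the sets $A_i$ the corresponding $k$ representatives are distinct elements of their union, so that union has at least $k$ elements. For the base case $n=1$, the marriage condition forces $A_1\neq\emptyset$, and any element of $A_1$ is a representative.

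For the inductive step I would assume the statement for every family of fewer than $n$ sets, take $\mathcal{S}=\{A_1,\ldots,A_n\}$ satisfying the marriage condition, and split into two cases according to whether $\mathcal{S}$ admits a \emph{critical} subfamily, meaning a collection of $k$ sets with $1\le k<n$ whose union has exactly $k$ elements. In the first case (no critical subfamily, so every union of $k<n$ of the sets has at least $k+1$ elements) one picks any $x_n\in A_n$ and forms $A_i'=A_i\setminus\{x_n\}$ for $i<n$; deleting a single element lowers each of these unions by at most one, so $\{A_1',\ldots,A_{n-1}'\}$ still satisfies the marriage condition, and the induction hypothesis produces an SDR $x_1,\ldots,x_{n-1}$, automatically distinct from $x_n$, whence $x_1,\ldots,x_n$ is the desired SDR.

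In the second case, relabel so that $\lvert A_1\cup\cdots\cup A_k\rvert=k$ with $1\le k<n$. The induction hypothesis applied to $\{A_1,\ldots,A_k\}$ gives an SDR $x_1,\ldots,x_k$, which by criticality exhausts $U:=A_1\cup\cdots\cup A_k$. Set $A_i'=A_i\setminus U$ for $k<i\le n$. For any $j$ of these reduced sets, the union of the corresponding $A_i$ together with $A_1,\ldots,A_k$ has, by the marriage condition for $\mathcal{S}$, at least $j+k$ elements, so removing the $k$ elements of $U$ leaves at least $j$; hence $\{A_{k+1}',\ldots,A_n'\}$ satisfies the marriage condition and, being of size $n-k<n$, admits an SDR $x_{k+1},\ldots,x_n$ by induction. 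These representatives lie outside $U$, so they are distinct from $x_1,\ldots,x_k$, and the concatenation $x_1,\ldots,x_n$ is an SDR for $\mathcal{S}$, completing the induction.

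The main obstacle is entirely the case analysis and the bookkeeping that the reduced families inherit the marriage condition: in the second case one must observe that the union of the $j$ sets $A_i'$ equals the union of the original $j+k$ sets $A_i$ minus \emph{exactly} the $k$ elements of $U$, which is precisely where criticality feeds into the marriage inequality. Everything else is routine.
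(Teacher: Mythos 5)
Your proof is correct and follows essentially the same route as the paper: the classical induction with a case split on whether a proper critical subfamily exists, deleting one chosen representative in the first case and removing the exhausted union $U$ from the remaining sets in the second. The only cosmetic difference is that you verify the inherited marriage condition directly where the paper argues by contradiction, which changes nothing of substance.
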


\begin{proof}
If $\mathcal{S}$ has a system of distinct representatives, then it's evident that $\mathcal{S}$ satisfies the marriage condition.

Now we prove that the converse is true by induction. If the family $\mathcal{S}$ contains only one finite set, the result is obvious. Suppose that the result of the Theorem \ref{thhall} is true for any family containing at most $n$ finite sets, and consider a family $\mathcal{S}$ of $(n+1)$ finite sets $\{A_1 , \ldots , A_{n+1}\}$ which satisfies the marriage condition. Then two situations can appear:
\begin{itemize}
\item The union of any $k$ of the sets of $\mathcal{S}$ contains at least $(k+1)$ elements for $k=1, \ldots , n$. In this case, we can choose any element of $A_{n+1}$ as representative of this set, and construct the family $\tilde{\mathcal{S}} = \{B_1 , \ldots , B_n\}$, where
\begin{displaymath}
B_i = \left\{ \begin{array}{ll}
A_i \setminus \{ x_{n+1} \} & \text{if } x_{n+1} \in A_i \\
A_i & \text{else}
\end{array} . \right.
\end{displaymath}
Then $\tilde{\mathcal{S}}$ is a family of $n$ finite sets which satisfies the marriage condition. So by induction, $\tilde{\mathcal{S}}$ has a system of distinct representatives, and we deduce that $\mathcal{S}$ has also a system of distinct representatives.

\item There exists a subfamily $F$ of $\mathcal{S}$, $F \neq \mathcal{S}$, such that there are as many finite sets in $F$ as elements in the union of the sets of $F$. Up to change the order of the sets in $\mathcal{S}$, we can suppose that $F = \{A_1 , \ldots , A_k\}$. In this case, as $F$ is a subfamily of $\mathcal{S}$, then $F$ satisfies the marriage condition, so by induction $F$ has a system of distinct representatives. We denote by $(x_1 , \ldots , x_n)$ the distinct representatives of $F$. Now, consider the subfamily $G = \{A_{k+1} , \ldots , A_{n+1}\}$, and construct a new family $\tilde{G} = \{B_{k+1} , \ldots , B_{n+1}\}$ where $B_i = A_i \setminus \{x_1 , \ldots , x_k\}$.

Then it's easy to see that $\tilde{G}$ satisfies the marriage condition. In fact, suppose $\tilde{G}$ does not satisfy the marriage condition, so there exists $l$ sets of $\tilde{G}$ such that the union of these $l$ sets contains at most $(l-1)$ elements. Once again, up to change the order of the sets of $\tilde{G}$, we can suppose that these $l$ sets are $B_{k+1} , \ldots , B_{k+l}$. In this configuration, we deduce that the union of the $k+l$ sets $A_1 , \ldots , A_{k+l}$ contains at most $(k+l-1)$ elements, but this is in contradiction this the fact that $\mathcal{S}$ satisfies the marriage condition.

In conclusion, $\tilde{G}$ satisfies the marriage condition, so it has a system of distinct representatives, and the union of the distinct representatives of $F$ and $\tilde{G}$ gives a system of distinct representatives for $\mathcal{S}$.
\end{itemize}
\end{proof}

\section{Balanced maps}

The objective of this section is to characterize combinatorially self branched coverings of the sphere by means of a specific kind of maps, namely the balanced maps. The precise statement will be given in Theorem \ref{thbalmap}.

\subsection{Construction of a map from a branched covering}
\label{construction}

Let $\pi : \mathbb{S}^2 \rightarrow \mathbb{S}^2$ be a branched covering of the sphere of degree $d$. Let $A$ be a set of $n$ distinct points consisting of (or more generally containing) the critical values. Up to postcomposition by an orientation preserving homeomorphism, we can suppose that this set of $n$ points is located at the $n$-th roots of unity. 
We represent each point by a white dot, and label these dots from $1$ to $n$ such that $exp(2i\pi k / n)$ gets the label $k$. Place a black dot at the origin, and trace a radial edge between this black dot and each white dot, so we obtain a connected map. Moreover, this map, denoted by $T$, is an increasing bipartite map (and more precisely a star-like map since $T$ has only one face and one black vertex).

\begin{figure}[ht]
\begin{center}
\psscalebox{0.8 0.8} 
{
\begin{pspicture}(0,-3.064247)(16.940695,3.064247)
\pscircle[linecolor=black, linewidth=0.04, fillstyle=solid,fillcolor=black, dimen=outer](1.404,2.9459777){0.12}
\pscircle[linecolor=black, linewidth=0.04, fillstyle=solid,fillcolor=black, dimen=outer](0.604,-1.4540223){0.12}
\pscircle[linecolor=black, linewidth=0.04, fillstyle=solid,fillcolor=black, dimen=outer](2.604,0.5459777){0.12}
\pscircle[linecolor=black, linewidth=0.04, fillstyle=solid,fillcolor=black, dimen=outer](4.206,1.7459778){0.12}
\pscircle[linecolor=black, linewidth=0.04, fillstyle=solid,fillcolor=black, dimen=outer](5.805,-0.6540223){0.12}
\pscircle[linecolor=black, linewidth=0.04, fillstyle=solid,fillcolor=black, dimen=outer](7.404,-1.4540223){0.12}
\pscircle[linecolor=black, linewidth=0.04, fillstyle=solid,fillcolor=black, dimen=outer](3.404,-2.2540224){0.12}
\pscircle[linecolor=black, linewidth=0.04, dimen=outer](1.805,-2.2540224){0.12}
\pscircle[linecolor=black, linewidth=0.04, dimen=outer](5.805,-2.2540224){0.12}
\pscircle[linecolor=black, linewidth=0.04, dimen=outer](3.805,-0.6540223){0.12}
\pscircle[linecolor=black, linewidth=0.04, dimen=outer](2.604,1.7459778){0.12}
\pscircle[linecolor=black, linewidth=0.04, dimen=outer](0.604,0.9459777){0.12}
\pscircle[linecolor=black, linewidth=0.04, dimen=outer](1.805,-0.6540223){0.12}
\pscircle[linecolor=black, linewidth=0.04, dimen=outer](5.805,0.9459777){0.12}
\pscircle[linecolor=black, linewidth=0.04, dimen=outer](6.607,2.1459777){0.12}
\psline[linecolor=black, linewidth=0.04](1.404,2.9659777)(0.64069444,1.0459777)
\psline[linecolor=black, linewidth=0.04](1.4206945,2.9459777)(2.5206945,1.8259777)
\psline[linecolor=black, linewidth=0.04](1.404,2.9459777)(6.4606943,2.1659777)
\psline[linecolor=black, linewidth=0.04](4.206,1.7659777)(2.7206945,1.7659777)
\psline[linecolor=black, linewidth=0.04](4.206,1.7459778)(5.6806946,1.0059777)
\psline[linecolor=black, linewidth=0.04](2.5806944,0.58597773)(2.604,1.6459777)
\psline[linecolor=black, linewidth=0.04](2.604,0.58597773)(1.8606944,-0.5540223)
\psline[linecolor=black, linewidth=0.04](2.604,0.52597773)(3.703,-0.5740223)
\psline[linecolor=black, linewidth=0.04](3.404,-2.2340224)(3.7606945,-0.7540223)
\psline[linecolor=black, linewidth=0.04](3.4206944,-2.2140224)(5.6806946,-2.2340224)
\psline[linecolor=black, linewidth=0.04](3.3806944,-2.2540224)(1.904,-2.2540224)
\psline[linecolor=black, linewidth=0.04](0.604,-1.4740223)(1.6806945,-2.1940222)
\psline[linecolor=black, linewidth=0.04](0.604,-1.4340223)(1.7206944,-0.7140223)
\psline[linecolor=black, linewidth=0.04](0.58069444,-1.4140223)(0.58069444,0.8659777)
\psline[linecolor=black, linewidth=0.04](5.805,-0.6340223)(5.7806945,0.8459777)
\psline[linecolor=black, linewidth=0.04](5.805,-0.6740223)(3.904,-0.6340223)
\psline[linecolor=black, linewidth=0.04](5.805,-0.6740223)(5.805,-2.1540222)
\psline[linecolor=black, linewidth=0.04](7.3806944,-1.4340223)(5.904,-2.2340224)
\psline[linecolor=black, linewidth=0.04](7.3806944,-1.3940223)(6.6606946,2.0659778)
\pscircle[linecolor=black, linewidth=0.04, dimen=outer](1.6206944,1.8259777){0.1}
\pscircle[linecolor=black, linewidth=0.04, dimen=outer](2.404,2.2459776){0.1}
\pscircle[linecolor=black, linewidth=0.04, dimen=outer](2.4406943,2.5859778){0.1}
\pscircle[linecolor=black, linewidth=0.04, dimen=outer](4.2406945,2.1659777){0.1}
\pscircle[linecolor=black, linewidth=0.04, dimen=outer](3.604,1.3859777){0.1}
\pscircle[linecolor=black, linewidth=0.04, dimen=outer](4.0,1.1059777){0.1}
\pscircle[linecolor=black, linewidth=0.04, dimen=outer](4.4206944,1.1659777){0.1}
\pscircle[linecolor=black, linewidth=0.04, dimen=outer](2.3606944,-0.1940223){0.1}
\pscircle[linecolor=black, linewidth=0.04, dimen=outer](2.6206944,-0.2140223){0.1}
\pscircle[linecolor=black, linewidth=0.04, dimen=outer](2.904,-0.2140223){0.1}
\pscircle[linecolor=black, linewidth=0.04, dimen=outer](0.7806944,-0.7340223){0.1}
\pscircle[linecolor=black, linewidth=0.04, dimen=outer](1.0806944,-0.8940223){0.1}
\pscircle[linecolor=black, linewidth=0.04, dimen=outer](0.10425,-2.0140224){0.1}
\pscircle[linecolor=black, linewidth=0.04, dimen=outer](3.3606944,-2.8740222){0.1}
\pscircle[linecolor=black, linewidth=0.04, dimen=outer](3.7206945,-1.7140223){0.1}
\pscircle[linecolor=black, linewidth=0.04, dimen=outer](3.8606944,-2.0340223){0.1}
\pscircle[linecolor=black, linewidth=0.04, dimen=outer](5.206,-0.97402227){0.1}
\pscircle[linecolor=black, linewidth=0.04, dimen=outer](5.5206943,-1.2940223){0.1}
\pscircle[linecolor=black, linewidth=0.04, dimen=outer](6.9806943,-0.6940223){0.1}
\pscircle[linecolor=black, linewidth=0.04, dimen=outer](6.6606946,-0.9540223){0.1}
\pscircle[linecolor=black, linewidth=0.04, dimen=outer](6.6206946,-1.3540223){0.1}
\pscircle[linecolor=black, linewidth=0.04, dimen=outer](6.6606946,-1.6140223){0.1}
\psline[linecolor=black, linewidth=0.04](5.8206944,-0.6340223)(6.560694,2.0659778)
\psline[linecolor=black, linewidth=0.07, arrowsize=0.053cm 2.0,arrowlength=1.4,arrowinset=0.0]{->}(9.020695,-0.2540223)(11.90,-0.23402229)
\pscircle[linecolor=black, linewidth=0.04, fillstyle=solid,fillcolor=black, dimen=outer](14.980695,-0.2540223){0.12}
\pscircle[linecolor=black, linewidth=0.04, dimen=outer](16.60,-0.27402228){0.12}
\pscircle[linecolor=black, linewidth=0.04, dimen=outer](15.80,0.9659777){0.12}
\pscircle[linecolor=black, linewidth=0.04, dimen=outer](14.160694,0.9859777){0.12}
\pscircle[linecolor=black, linewidth=0.04, dimen=outer](13.380694,-0.27402228){0.12}
\pscircle[linecolor=black, linewidth=0.04, dimen=outer](14.180695,-1.4340223){0.12}
\pscircle[linecolor=black, linewidth=0.04, dimen=outer](15.80,-1.4540223){0.12}
\psline[linecolor=black, linewidth=0.04](14.960694,-0.2540223)(16.480694,-0.2540223)
\psline[linecolor=black, linewidth=0.04](15.00,-0.1940223)(15.740694,0.9059777)
\psline[linecolor=black, linewidth=0.04](14.940695,-0.2140223)(14.220695,0.9059777)
\psline[linecolor=black, linewidth=0.04](14.980695,-0.2140223)(13.480695,-0.27402228)
\psline[linecolor=black, linewidth=0.04](14.960694,-0.2540223)(14.240694,-1.3140223)
\psline[linecolor=black, linewidth=0.04](14.980695,-0.23402229)(15.720695,-1.3740222)
\psline[linecolor=black, linewidth=0.04](1.404,2.9459777)(1.5806944,1.9259777)
\psline[linecolor=black, linewidth=0.04](1.4406945,2.9459777)(2.3406944,2.6259778)
\psline[linecolor=black, linewidth=0.04](1.404,2.9459777)(2.3206944,2.3059778)
\psline[linecolor=black, linewidth=0.04](4.206,1.7859777)(4.2206945,2.0859778)
\psline[linecolor=black, linewidth=0.04](4.1606946,1.7659777)(3.6606944,1.4459777)
\psline[linecolor=black, linewidth=0.04](4.1806946,1.7459778)(4.040694,1.1859777)
\psline[linecolor=black, linewidth=0.04](4.206,1.7459778)(4.3806944,1.2659777)
\psline[linecolor=black, linewidth=0.04](7.4206944,-1.4340223)(7.040694,-0.7540223)
\psline[linecolor=black, linewidth=0.04](7.3606944,-1.4340223)(6.7406945,-0.97402227)
\psline[linecolor=black, linewidth=0.04](7.3406944,-1.4540223)(6.706,-1.3340223)
\psline[linecolor=black, linewidth=0.04](7.3606944,-1.4340223)(6.7406945,-1.5940223)
\psline[linecolor=black, linewidth=0.04](5.7606945,-0.6740223)(5.2806945,-0.9340223)
\psline[linecolor=black, linewidth=0.04](5.7806945,-0.6340223)(5.560694,-1.2140223)
\psline[linecolor=black, linewidth=0.04](3.4206944,-2.2540224)(3.7806945,-2.0740223)
\psline[linecolor=black, linewidth=0.04](3.4206944,-2.1940222)(3.6606944,-1.7740223)
\psline[linecolor=black, linewidth=0.04](2.5606945,0.58597773)(2.3606944,-0.11402229)
\psline[linecolor=black, linewidth=0.04](2.5806944,0.5059777)(2.6206944,-0.11402229)
\psline[linecolor=black, linewidth=0.04](2.6206944,0.5459777)(2.8606944,-0.11402229)
\psline[linecolor=black, linewidth=0.04](0.56069446,-1.4140223)(0.16069442,-1.9340223)
\psline[linecolor=black, linewidth=0.04](3.3806944,-2.2540224)(3.3606944,-2.7740223)
\psline[linecolor=black, linewidth=0.04](0.58069444,-1.4340223)(1.0206944,-0.9540223)
\psline[linecolor=black, linewidth=0.04](0.604,-1.4140223)(0.76069444,-0.8140223)
\rput[bl](16.760695,-0.33402228){$6$}
\rput[bl](15.840694,1.1459777){$1$}
\rput[bl](13.960694,1.1459777){$2$}
\rput[bl](13.040694,-0.3540223){$3$}
\rput[bl](13.880694,-1.6740223){$4$}
\rput[bl](15.940695,-1.6340222){$5$}
\rput[bl](2.2806945,1.5259777){$3$}
\rput[bl](0.3044,0.8259777){$1$}
\rput[bl](1.7606944,-2.6740222){$3$}
\rput[bl](5.9406943,-2.5140224){$5$}
\rput[bl](5.8406944,1.0459777){$1$}
\rput[bl](6.706,2.2659776){$6$}
\rput[bl](3.9406943,-0.4740223){$2$}
\rput[bl](1.6406944,-0.4340223){$4$}
\rput[bl](4.3406944,2.0659778){$2$}
\rput[bl](3.3406944,1.1859777){$4$}
\rput[bl](3.8806944,0.7259777){$5$}
\rput[bl](4.540694,0.8859777){$6$}
\rput[bl](1.5406945,1.4659777){$2$}
\rput[bl](2.505,2.0859778){$4$}
\rput[bl](2.5406945,2.4059777){$5$}
\rput[bl](0.08069443,-2.3940222){$2$}
\rput[bl](1.1606945,-0.7540223){$5$}
\rput[bl](0.7806944,-0.6140223){$6$}
\rput[bl](2.2206945,-0.5940223){$5$}
\rput[bl](2.5806944,-0.6140223){$6$}
\rput[bl](2.9206944,-0.5940223){$1$}
\rput[bl](3.5206945,-2.9940224){$4$}
\rput[bl](3.9806945,-1.9940223){$6$}
\rput[bl](3.805,-1.5940223){$1$}
\rput[bl](4.9606943,-1.2540222){$3$}
\rput[bl](5.4206944,-1.6940223){$4$}
\rput[bl](6.9206944,-0.5340223){$1$}
\rput[bl](6.503,-0.8340223){$2$}
\rput[bl](6.3806944,-1.3140223){$3$}
\rput[bl](6.3406944,-1.7540222){$4$}
\rput[bl](9.980695,-0.054022294){\Large $\pi$}
\end{pspicture}
}
\end{center}
\caption{Example of the representation $\Gamma$ of a branched covering $\pi$ of degree $7$.}
\label{figrepresent}
\end{figure}
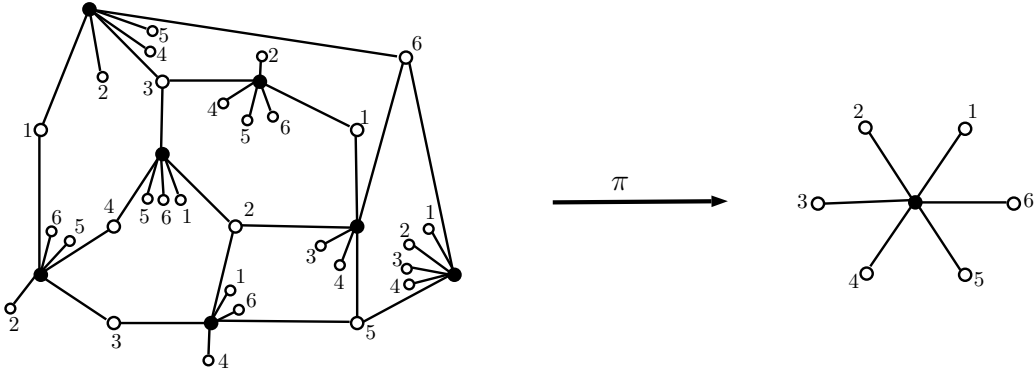

From now on, we are interested in the pull-back map of $T$ by $\pi$.
\begin{thgamcon}
\label{thgamcon} The map   $\Gamma := \pi^{-1}(T)$  (i.e. the cells are the connected components of pulled back cells) is an increasing bipartite map with exactly $d$ faces.
\end{thgamcon}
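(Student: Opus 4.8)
The plan is to analyze the pull-back $\Gamma = \pi^{-1}(T)$ directly, using the fact that $T$ is a star-like bipartite map with one black vertex at the origin, $n$ white vertices at the $n$-th roots of unity, and $n$ radial edges, together with the local model $z \mapsto z^k$ for $\pi$ near ramification points. First I would check that $\Gamma$ is a legitimate planar map: since $T$ is connected and contains all critical values of $\pi$, the restriction of $\pi$ over $\mathbb{S}^2 \setminus T$ is an honest covering of a disjoint union of open discs (the single face of $T$ being simply connected), so each face of $\Gamma$ is simply connected and maps homeomorphically to the face of $T$; thus $\Gamma$ is a cellular embedding, hence a planar map, and it is connected because $\mathbb{S}^2$ is connected and $\Gamma = \pi^{-1}(T)$ separates the preimages of the face of $T$.

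Next I would verify the bipartite structure and the count of faces. Color a vertex of $\Gamma$ black if it lies over the origin (the black vertex of $T$) and white if it lies over a root of unity. Every edge of $\Gamma$ lies over a radial edge of $T$, which joins the black vertex to a white vertex; since $\pi$ is a local homeomorphism away from critical points and the edges of $T$ avoid creating any folding, each edge of $\Gamma$ inherits exactly one black and one white tip — so $\Gamma$ is bipartite in the sense of Definition \ref{defbipmap}. For the face count: the unique face $f$ of $T$ is an open topological disc, and $\pi^{-1}(f) \to f$ is a covering map of degree $d$ over a simply connected base, hence is a trivial covering, i.e.\ a disjoint union of exactly $d$ copies of $f$. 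Each such copy is one face of $\Gamma$, so $F_\Gamma = d$.

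Then I would establish the ``increasing'' property. Around the black vertex $0$ of $T$, the edges leave in cyclic (counterclockwise) order $1, 2, \ldots, n$ by construction. Let $\tilde v$ be a black vertex of $\Gamma$ lying over $0$ with ramification number $k$; by the local model, $\pi$ near $\tilde v$ is conjugate to $z \mapsto z^k$, which is an orientation-preserving local branched cover, so it maps a small counterclockwise loop around $\tilde v$ to $k$ counterclockwise turns around $0$. Consequently the edges incident to $\tilde v$, read counterclockwise, pull back the cyclic sequence $(1,\dots,n)$ repeated $k$ times; one then labels each white vertex of $\Gamma$ by the label of its image white vertex in $T$, and the labels around $\tilde v$ appear in the required cyclically increasing order. (A minor point to address: an increasing bipartite map has no multiple edges; this holds here because two distinct radial edges of $T$ land on distinct white vertices, and within one $z\mapsto z^k$ chart the $k$ preimages of a fixed radial edge attach to $k$ distinct white preimages.)

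The main obstacle, and the step deserving the most care, is the orientation/winding argument that upgrades ``$\pi$ is locally $z \mapsto z^k$'' to the precise cyclic-order statement around each black vertex — in particular making sure the counterclockwise orientation is consistently transported through the charts so that the labels genuinely increase (rather than decrease) counterclockwise, and handling the case $k=1$ (ordinary points over $0$) uniformly with $k>1$. Everything else — simple-connectivity of faces, triviality of the covering over the disc $f$, the bipartite coloring — is routine covering-space theory once the local model is invoked. I would also remark that the same argument shows each white vertex of $\Gamma$ over the root of unity $\zeta$ has degree equal to its ramification number, which will be convenient later, but this is not needed for the present statement.
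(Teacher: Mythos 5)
Your proposal is correct and takes essentially the same route as the paper: the face count comes from the fact that $\pi$ restricted over the unique, simply connected face of $T$ is a degree-$d$ covering, hence trivial, which is exactly the paper's lifting-of-the-identity argument, and the bipartite and increasing structures are read off from the construction and the local model, which the paper treats as immediate. One correction, though: in the construction the black dot of $T$ is placed at a \emph{non-critical} value (all critical values are among the white dots), so every black vertex of $\Gamma$ is unramified and your case $k>1$ never occurs. This matters because your argument for $k>1$ is not actually sound: the labels read counterclockwise around such a vertex would be $1,\ldots,n$ repeated $k$ times, which is not an increasing order, and your parenthetical claim that the $k$ preimages of a fixed radial edge near $\tilde v$ end at $k$ distinct white vertices can fail when the white endpoint is also ramified (for $z\mapsto z^2$, an edge joining the two critical values lifts to a double edge). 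Once you observe that $k=1$ always holds here, both points are immediate and your proof goes through. Finally, your one-line justification of the connectedness of $\Gamma$ is a bit quick; the clean statement, which the paper invokes, is that the complement in $\mathbb{S}^2$ of a disjoint union of open simply connected domains is connected.
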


\begin{proof}
By construction, we prove easily that $\Gamma$ is an increasing bipartite map. So the only difficulty is to prove that $\Gamma$ is a map with $d$ faces. Note that the induced map $\pi: \mathbb{S}^2 \setminus \Gamma \rightarrow \mathbb{S}^2 \setminus T$ is a covering of degree $d$, and $\mathbb{S}^2 \setminus T$ is a simply connected open set. Fix a point $b \in \mathbb{S}^2 \setminus T$. Then, for each point $x \in \pi^{-1}(b)$, the continuous function $Id : \mathbb{S}^2 \setminus T \rightarrow \mathbb{S}^2 \setminus T$ has a (unique) lift $s: \mathbb{S}^2 \setminus T \rightarrow \mathbb{S}^2 \setminus \Gamma$ such that $s(b)=x$. Moreover, the function $s$ is a global section of $\pi$ and defines a homeomorphism of $\mathbb{S}^2 \setminus T$ on the connected component of $\mathbb{S}^2 \setminus \Gamma$ containing $x$. So $\mathbb{S}^2 \setminus \Gamma$ is the union of $d$ simply connected domains.

Moreover, the complement of a union of disjoint simply connected domains in $\mathbb{S}^2$ is necessarily connected. In conclusion, $\Gamma$ is an increasing bipartite map with $d$ faces.
\end{proof}

Now, let us give some details about the map $\Gamma$. By definition, for each point of the regular set of the branched covering $\pi$, there exists an open neighborhood $U$ such that the preimage of $U$ by $\pi$ is a disjoint union of $d$ open sets, each homeomorphic to $U$. In other words, each point of the sphere, except the critical values of $\pi$, has exactly $d$ distinct preimages by $\pi$. In particular, the black vertex and each edge of $T$ has $d$ preimages by $\pi$. So $\Gamma$ is an increasing bipartite map consisting of $d$ faces, $n\cdot d$ edges and $d$ black vertices. Moreover, the degree of each face and black vertex of $\Gamma$ is the same, equals to $n$.

Finally, as the passport of $\pi$ describes the behavior of each of its critical value, it's easy to see that the number of white vertices of $\Gamma$ is equal to the number of integers in the passport of $\pi$.

A direct consequence of this last theorem is the Riemann-Hurwitz condition that a passport must satisfy to be realizable. In fact, by Theorem \ref{thgamcon} and as $\pi$ is a branched covering of degree $d$, $\Gamma$ is a connected map with $d$ faces, $n \cdot d$ edges and $d$ black vertices. So if we denote by $m$ the number of white vertices of $\Gamma$, given by the passport of $\pi$, then by the Euler characteristic, we have
\begin{displaymath}
(d+m) + d = d \cdot n + 2
\end{displaymath} 
and so
\begin{displaymath}
2d - 2 = d \cdot n - m = \nu(\mathcal{D}(\pi)).
\end{displaymath}

\begin{defrepresentation}
We call $\Gamma$ a \textbf{representation} of the branched covering $\pi$.
\end{defrepresentation}

Thus we have a first model of a branched covering by a geometrical object. Nevertheless, this object is actually too complicated and so from now on, our goal is to simplify it without losing important information to be able to solve some realizability problems of given passports. For that, we use two maps in order to reduce the complexity of $\Gamma$, and then we prove in the next subsection that we can combine these two ideas.

\vspace{2mm}

First, let's see what happens if we erase each $1$-valence (automatically white) vertex and its attached edge of $\Gamma$. As $\Gamma$ is an increasing bipartite map with as many black vertices as faces, each of degree $n$, and with white vertices labelled from $1$ to $n$, then by erasing every $1$-valence vertex of $\Gamma$, we obtain a unique increasing bipartite map with the same properties as $\Gamma$ except those about the degree of each black vertex and face.

Conversely, if we consider such a map, it's simple to reconstruct our representation $\Gamma$ (cf Figure \ref{figrepresent1} for an example). In conclusion, we have a first bijection that allows us to simplify our representation. We call this map the \textbf{labeled skeleton} of the branched covering $\pi$.

\begin{figure}[ht]
\begin{center}
\psscalebox{0.8 0.8} 
{
\begin{pspicture}(0,-2.9042468)(7.218269,2.9042468)
\pscircle[linecolor=black, linewidth=0.04, fillstyle=solid,fillcolor=black, dimen=outer](1.1,2.7859776){0.12}
\pscircle[linecolor=black, linewidth=0.04, fillstyle=solid,fillcolor=black, dimen=outer](0.3,-1.6140223){0.12}
\pscircle[linecolor=black, linewidth=0.04, fillstyle=solid,fillcolor=black, dimen=outer](2.3,0.38597772){0.12}
\pscircle[linecolor=black, linewidth=0.04, fillstyle=solid,fillcolor=black, dimen=outer](3.9,1.5859777){0.12}
\pscircle[linecolor=black, linewidth=0.04, fillstyle=solid,fillcolor=black, dimen=outer](5.5,-0.8140223){0.12}
\pscircle[linecolor=black, linewidth=0.04, fillstyle=solid,fillcolor=black, dimen=outer](7.1,-1.6140223){0.12}
\pscircle[linecolor=black, linewidth=0.04, fillstyle=solid,fillcolor=black, dimen=outer](3.1,-2.4140222){0.12}
\pscircle[linecolor=black, linewidth=0.04, dimen=outer](1.5,-2.4140222){0.12}
\pscircle[linecolor=black, linewidth=0.04, dimen=outer](5.5,-2.4140222){0.12}
\pscircle[linecolor=black, linewidth=0.04, dimen=outer](3.5,-0.8140223){0.12}
\pscircle[linecolor=black, linewidth=0.04, dimen=outer](2.3,1.5859777){0.12}
\pscircle[linecolor=black, linewidth=0.04, dimen=outer](0.3,0.7859777){0.12}
\pscircle[linecolor=black, linewidth=0.04, dimen=outer](1.5,-0.8140223){0.12}
\pscircle[linecolor=black, linewidth=0.04, dimen=outer](5.5,0.7859777){0.12}
\pscircle[linecolor=black, linewidth=0.04, dimen=outer](6.3,1.9859776){0.12}
\psline[linecolor=black, linewidth=0.04](1.1,2.8059778)(0.34,0.8859777)
\psline[linecolor=black, linewidth=0.04](1.12,2.7859776)(2.22,1.6659777)
\psline[linecolor=black, linewidth=0.04](1.1,2.7859776)(6.16,2.0059776)
\psline[linecolor=black, linewidth=0.04](3.9,1.6059777)(2.42,1.6059777)
\psline[linecolor=black, linewidth=0.04](3.9,1.5859777)(5.38,0.8459777)
\psline[linecolor=black, linewidth=0.04](2.28,0.4259777)(2.3,1.4859776)
\psline[linecolor=black, linewidth=0.04](2.3,0.4259777)(1.56,-0.7140223)
\psline[linecolor=black, linewidth=0.04](2.3,0.3659777)(3.4,-0.7340223)
\psline[linecolor=black, linewidth=0.04](3.1,-2.3940222)(3.46,-0.91402227)
\psline[linecolor=black, linewidth=0.04](3.12,-2.3740222)(5.38,-2.3940222)
\psline[linecolor=black, linewidth=0.04](3.08,-2.4140222)(1.6,-2.4140222)
\psline[linecolor=black, linewidth=0.04](0.3,-1.6340222)(1.38,-2.3540223)
\psline[linecolor=black, linewidth=0.04](0.3,-1.5940223)(1.42,-0.8740223)
\psline[linecolor=black, linewidth=0.04](0.28,-1.5740223)(0.28,0.7059777)
\psline[linecolor=black, linewidth=0.04](5.5,-0.7940223)(5.48,0.6859777)
\psline[linecolor=black, linewidth=0.04](5.5,-0.8340223)(3.6,-0.7940223)
\psline[linecolor=black, linewidth=0.04](5.5,-0.8340223)(5.5,-2.3140223)
\psline[linecolor=black, linewidth=0.04](7.08,-1.5940223)(5.6,-2.3940222)
\psline[linecolor=black, linewidth=0.04](7.08,-1.5540223)(6.36,1.9059777)
\psline[linecolor=black, linewidth=0.04](5.52,-0.7940223)(6.26,1.9059777)
\rput[bl](1.98,1.3659778){$3$}
\rput[bl](0.0,0.6659777){$1$}
\rput[bl](1.46,-2.8340223){$3$}
\rput[bl](5.64,-2.6740222){$5$}
\rput[bl](5.54,0.8859777){$1$}
\rput[bl](6.4,2.1059778){$6$}
\rput[bl](3.64,-0.6340223){$2$}
\rput[bl](1.34,-0.5940223){$4$}
\end{pspicture}
}
\end{center}
\caption{Example of the increasing bipartite map obtained by erasing all the $1$-degree vertices of $\Gamma$.}
\label{figrepresent1}
\end{figure}
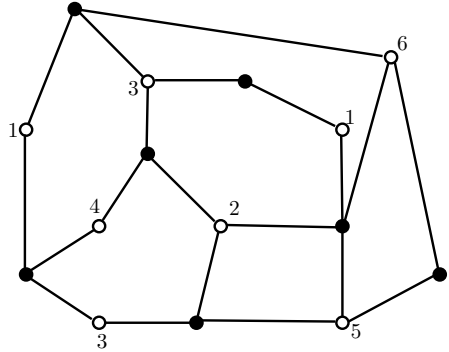

\vspace{2mm}

An alternative possibility is to erase the label of each white vertex of $\Gamma$ such that we obtain a unique bipartite map, denoted by $\tilde{\Gamma}$, with as many black vertices as faces, each of degree $n$. We call these maps a $n$\textbf{-regular skeleton} of the branched covering. Conversely, consider such a map $\tilde{\Gamma}$, we prove the following proposition:

\begin{proprepre}
Every bipartite map with equal number $d$ of faces and black vertices, each of constant degree $n$, comes from a representation of a branched covering of degree $d$, with at most $n$ distinct critical values.
\end{proprepre}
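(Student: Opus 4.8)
The plan is to reverse the construction leading to Theorem~\ref{thgamcon}. Since the $n$-regular skeleton $\tilde{\Gamma}$ is simply a representation with the labels of its white vertices erased, the task splits into two parts: (i) choose a numbering $\ell\colon\mathcal{W}_{\tilde{\Gamma}}\to\{1,\dots,n\}$ turning $\tilde{\Gamma}$ into an increasing bipartite map in which every black vertex --- necessarily of degree $n$ --- is adjacent to exactly one white vertex of each label; and (ii) realize the resulting labelled map $\Gamma'$ as $\pi^{-1}(T)$, where $T$ is the star with black centre $c$ and white leaves $v_1,\dots,v_n$ used in Section~\ref{construction}. By Proposition~\ref{propdegbwvf} and Euler's formula, $\tilde{\Gamma}$ has $dn$ edges and $d(n-1)+2$ vertices in all.

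For (i) I would look for a labelling of the \emph{edges} of $\tilde{\Gamma}$ by $\mathbb{Z}/n\mathbb{Z}$ that is constant around each white vertex and increases by exactly $1$ between consecutive edges, in counterclockwise order, around every black vertex; such a labelling descends to the white vertices, and at each black vertex of degree $n$ it then runs once through all of $\mathbb{Z}/n\mathbb{Z}$ in cyclic order, as required. Locally this is consistent precisely because one full turn around a black vertex, or around a face, of degree $n$ accumulates the increment $n\equiv 0$. Globally, $\ell$ exists if and only if the accumulated increment vanishes mod $n$ along every closed walk of $\tilde{\Gamma}$; and since $\tilde{\Gamma}$ is drawn on the sphere, the relevant cycle space is spanned by the boundary walks of the black vertices, of the white vertices and of the faces, on each of which the increment is a multiple of $n$. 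This compatibility is the heart of the proof and, I expect, the main obstacle: it is exactly the sort of statement a dedicated labelling proposition isolates, and it is where both the planarity and the degree hypotheses are used.

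Granting the labelling, part (ii) is a gluing. Model $\mathbb{S}^2$ together with the star $T$ by a quotient $q\colon\overline{D}\to\mathbb{S}^2$ of a closed disc whose boundary is cut into $2n$ arcs by $2n$ corner points realizing the boundary walk $c,v_1,c,v_2,\dots,c,v_n$ of the unique face of $T$; then $q$ is a homeomorphism of the open disc onto $\mathbb{S}^2\setminus T$, and the two arcs over the edge $cv_i$ are the ones flanking the $v_i$-corner. For each face $f$ of $\Gamma'$ take a copy $\overline{D}_f$ of $\overline{D}$, and glue the $\overline{D}_f$ along their boundary arcs following the edge--face incidences of $\Gamma'$: an edge labelled $i$ is glued, in its two incident faces, to the two arcs over $cv_i$, black ends to black ends and white ends to the $v_i$-corners. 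The result is a closed surface with $d$ faces, $dn$ edges and $d(n-1)+2$ vertices, hence of Euler characteristic $2$, i.e.\ $\mathbb{S}^2$, and its $1$-skeleton is $\Gamma'$; moreover the copies of $q$ agree on the overlaps, so they give a continuous $\pi\colon\mathbb{S}^2\to\mathbb{S}^2$. It is precisely the ``one white vertex of each label at every black vertex'' property that makes $\pi$ well defined, the $n$ disc-sectors around a black vertex winding once around $c$.

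It then remains to read off the local models of $\pi$: a local homeomorphism in the interior of each face and each edge; a homeomorphism onto a neighbourhood of $c$ at each black vertex (the degree-$n$ hypothesis forbidding any higher winding); and a map conjugate to $z\mapsto z^k$ onto a neighbourhood of $v_i$ at a white vertex of degree $k$ and label $i$. Thus $\pi$ is a branched covering; its degree is the number of sheets over $\mathbb{S}^2\setminus T$, namely the number $d$ of faces; and its critical values lie in $\{v_1,\dots,v_n\}$, so there are at most $n$ of them. Finally $\pi^{-1}(T)=\Gamma'$ by construction, so $\Gamma'$ is a representation of $\pi$ and $\tilde{\Gamma}$ is recovered from it by deleting the labels, as claimed. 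The delicate point, to reiterate, is the existence of the labelling in part (i); parts (ii) and the local-model verification are routine once it is in hand.
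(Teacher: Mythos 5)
Your proposal is correct and follows essentially the same route as the paper: the paper also reduces the statement to producing a labelling that turns $\tilde{\Gamma}$ into an increasing bipartite map (it does so via the $n$-gon replacement $\Gamma_r$, the chessboard orientation, and the directed edge-distance mod $n$, with well-definedness proved by the face-sum argument of Proposition~\ref{proplabel}), and then reconstructs the branched covering from the labelled map. Your cycle-space justification of the labelling is exactly the ``cohomological'' variant the paper's footnote alludes to, and your explicit disc-gluing with the Euler-characteristic check merely fills in the reconstruction step that the paper treats as routine.
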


For that, we first need to transform $\tilde{\Gamma}$ by changing each black vertex and the $n$ edges coming from this vertex by a black $n$-gon (cf Figure \ref{figrepresent2} for an example), so we obtain a $n$-regular map, i.e. a map in which each face is of degree $n$, denoted by $\Gamma_r$. Notice that $\Gamma_r$ has two kinds of faces (those containing black vertices and those that are blank) and the two kinds alternate like a chessboard. So we can orient the edges of $\Gamma_r$ keeping  black faces to the left.

Now, choose a vertex $v$ of $\Gamma_r$ (i.e. a white vertex of $\Gamma$), and label it by $0$. Then we label all other vertices by the directed edge-distance from $v$ modulo $n$, i.e. the number of edges we need to pass through following the edge-orientation of $\Gamma_r$. Notice that this labeling depends on the choice of the first vertex $v$.

\begin{proplabel}
\label{proplabel}
This labeling is well-defined \footnote{One can prove easily this result using an argument of cohomology, but here we choose to avoid such a theory.}.
\end{proplabel}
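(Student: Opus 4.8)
The plan is to produce a function $\tilde h$ from the vertices of $\Gamma_r$ to $\mathbb{Z}/n\mathbb{Z}$ with $\tilde h(v)=0$ and with $\tilde h(\text{head of }e)=\tilde h(\text{tail of }e)+1$ in $\mathbb{Z}/n\mathbb{Z}$ for every oriented edge $e$ of $\Gamma_r$; this is exactly equivalent to the well-definedness asserted here, and such a $\tilde h$ is then forced to coincide with the proposed labeling. The starting observation is that, because the two colours of faces alternate and every edge is oriented with the black face on its left, the $n$ edges bounding any face of $\Gamma_r$, taken in cyclic order, form a directed $n$-cycle running once around that face. In particular, from either endpoint of an edge $e$ one can reach the other endpoint by a directed path — directly along $e$, or the ``long way round'' a face containing $e$ — so, $\Gamma_r$ being connected, every vertex is joined to $v$ by a directed edge-path, and the proposed labeling is at least \emph{defined} at each vertex.

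Next I would introduce, for any closed walk $\gamma$ in the underlying graph of $\Gamma_r$, the integer $\ell(\gamma)$ equal to the number of edges traversed by $\gamma$ along their orientation minus the number traversed against it; it is additive under concatenation, changes sign under reversal, and is unaffected by inserting or deleting backtracks. The key claim is that $\ell(\gamma)\equiv 0\pmod n$ for every closed walk $\gamma$ of $\Gamma_r$. Granting this, if $P$ and $Q$ are two directed edge-paths from $v$ to $w$, then running along $P$ and then back along $Q$ in reverse is a closed walk $\gamma$ with $\ell(\gamma)=\mathrm{length}(P)-\mathrm{length}(Q)$, whence $\mathrm{length}(P)\equiv \mathrm{length}(Q)\pmod n$; this is precisely the statement. (Alternatively, the claim lets one build $\tilde h$ by hand: extend $\tilde h$ from $v$ across a spanning tree of $\Gamma_r$, adding or subtracting $1$ according to the orientation of each tree edge, and then use the claim on the fundamental cycle of each remaining edge to check the defining relation there — the same bookkeeping that underlies the parity criterion for bipartiteness stated earlier.)

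The heart of the argument, and the step I expect to cost the most care, is the claim $\ell(\gamma)\equiv 0\pmod n$, since it is where simple connectedness of $\mathbb{S}^2$ is genuinely used and where the chessboard sign convention must be handled correctly. I would first reduce to simple closed curves: deleting backtracks does not change $\ell$, and any closed walk then has the same value of $\ell$ as a concatenation of simple cycles, on each of which $\ell$ is computed separately. A simple closed curve $\sigma$ in $\Gamma_r$ cuts $\mathbb{S}^2$ into two discs; choosing one, call it $D$, I would induct on the number of faces of $\Gamma_r$ contained in $D$. If $D$ is a single face $f$, then $\sigma=\partial f$, and by the opening observation $\ell(\sigma)=\pm n\equiv 0\pmod n$. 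If $D$ contains at least two faces, pick an arc $\tau$ formed of edges of $\Gamma_r$, lying in $\overline D$ and meeting $\sigma$ only at its endpoints, that splits $D$ into two discs $D_1,D_2$ each containing strictly fewer faces; writing $\sigma$ accordingly as its two arcs, each joined to $\tau$ traversed once in each direction, additivity of $\ell$ together with $\ell(\tau)+\ell(\overline\tau)=0$ gives $\ell(\sigma)=\ell(\partial D_1)+\ell(\partial D_2)\equiv 0$ by the inductive hypothesis. As the footnote indicates, the same claim is also immediate from $H^1(\mathbb{S}^2;\mathbb{Z}/n\mathbb{Z})=0$, the edge-function identically equal to $1$ being a cocycle whose evaluation on every face boundary is $\pm n$.
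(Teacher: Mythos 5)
Your overall route is sound and, at bottom, the same idea as the paper's (and as the cohomological remark in the footnote), but packaged differently. The paper works only with orientation-respecting closed curves, reduces to a Jordan curve, and evaluates its length in one stroke by summing over the faces on one side the face-boundary sums weighted $+1$ on black faces and $-1$ on blank ones, so that interior edges cancel by the chessboard colouring and each enclosed face contributes its degree $n$. You instead introduce the signed length $\ell$ of an arbitrary closed walk, reduce to simple cycles, and induct on the number of enclosed faces, using the colouring only through the observation that every face boundary of $\Gamma_r$ is a coherently directed $n$-cycle. Your version has two genuine advantages: the auxiliary cut $\tau$ need not be directed, and you explicitly establish directed reachability (every vertex is joined to $v$ by a directed path, by going the long way around a face), a point the paper uses implicitly when it invokes a directed return path $\gamma$ from $w$ to $v$ without justifying its existence.

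The one step you assert without proof, and which can fail as literally stated, is the existence of the splitting arc $\tau$ in the inductive step. If the edges of $\Gamma_r$ interior to $D$ meet $\sigma$ at a single vertex $x$ only (for instance a small polygon drawn inside $D$ and attached to $\sigma$ at the cut vertex $x$), then $D$ contains at least two faces, yet there is no arc of edges with two distinct endpoints on $\sigma$ meeting $\sigma$ only at its endpoints. Nothing available at this point of the paper excludes such configurations: the statement that a white vertex cannot be doubly incident to a face is proved only later (Proposition \ref{propwhitedeg}) and only for balanced maps, whereas here $\Gamma_r$ comes from an arbitrary bipartite map satisfying the degree hypotheses of Proposition \ref{proprepre}. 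The gap is easily repaired: either allow $\tau$ to be a simple loop of interior edges based at one vertex of $\sigma$ (it still splits $D$ into two discs with strictly fewer faces, and the $\ell$-bookkeeping goes through unchanged), or drop the induction and conclude as the paper does, by summing the coherently directed face boundaries over the faces inside $D$ with sign $+1$ for black and $-1$ for blank faces, so that interior edges cancel and $\ell(\sigma)\equiv 0\pmod{n}$ follows from every face having degree $n$. With that step mended, your argument is complete.
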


\begin{proof}
We have to prove that for any given vertex $w$ of $\Gamma_r$, the label of $w$ is independent of the choice of a  directed path from $v$ to $w$, i.e. two paths from $v$ to $w$ have the same length modulo $n$. Consider two directed paths, denoted by $\gamma_1$ and $\gamma_2$, from $v$ to $w$, and a directed path $\gamma$ from $w$ to $v$, preserving the orientation of $\Gamma_r$, so that the concatenation $\gamma \circ \gamma_i$ of $\gamma$ and $\gamma_i$ gives a closed path, for $i=1$, $2$. 

Let us fix an index $i=1$ or $2$. If $\gamma \cap \gamma_i \neq \{v,w\}$, then $\gamma \circ \gamma_i$ is a 'gluing' of some Jordan curves, and so up to study each Jordan curves separately, we can suppose that $\gamma \cap \gamma_i = \{v,w\}$, i.e. $\gamma \circ \gamma_i$ defines a Jordan curve. In consequence, $\gamma \circ \gamma_i$ divides the set $\mathcal{F}_{\Gamma_r}$ into two subsets: one containing the faces to the left of $\gamma \circ \gamma_i$, and the other containing the faces to the right of $\gamma \circ \gamma_i$.

For the following, we consider the function
\begin{displaymath}
\omega : \begin{array}{ccc}
\mathcal{F}_{\Gamma_r} & \rightarrow & \mathbb{R}[\mathcal{E}_{\Gamma_r}] \\
f & \mapsto & \varepsilon_f \cdot \sum \limits_{a \in \mathcal{E}_{\Gamma_r} \cap \partial f}{a}
\end{array},
\end{displaymath}
with $\varepsilon_f = 1$ if $f$ is a black face, and $\varepsilon_f = -1$ else. Then, if we denote by $\omega(\gamma \circ \gamma_i)$ the sum of all the edges (with coefficient $1$) passed through by $\gamma \circ \gamma_i$, and by $\mathcal{F}_{ext}$ the set of the faces to the left of $\gamma \circ \gamma_i$, we prove easily that:
\begin{displaymath}
\omega(\gamma \circ \gamma_i) = \sum_{f \in \mathcal{F}_{ext}}{\omega(f)} .
\end{displaymath}
This last result implies that the length of the path $\gamma \circ \gamma_i$ (modulo $n$) is equal to
\begin{displaymath}
\sum_{f \in \mathcal{F}_{ext}} ~ \sum_{a \in \mathcal{E}_{\Gamma_r} \cap \partial f}{1}.
\end{displaymath}
As each face of $\Gamma_r$ is of degree $n$, we deduce that the length of $\gamma \circ \gamma_i$ is equal to $0$ modulo $n$, i.e. the length of $\gamma$ plus the length of $\gamma_i$ is equal to $0$ modulo $n$. 

In conclusion, $\gamma_1$ and $\gamma_2$ have the same length modulo $n$. 
\end{proof}

\begin{figure}[ht]
\begin{center}
\psscalebox{0.8 0.8} 
{
\begin{pspicture}(0,-3.019482)(7.5189633,3.019482)
\pscircle[linecolor=black, linewidth=0.04, fillstyle=solid,fillcolor=black, dimen=outer](1.404,2.9012127){0.12}
\pscircle[linecolor=black, linewidth=0.04, fillstyle=solid,fillcolor=black, dimen=outer](0.604,-1.4987873){0.12}
\pscircle[linecolor=black, linewidth=0.04, fillstyle=solid,fillcolor=black, dimen=outer](2.604,0.5012127){0.12}
\pscircle[linecolor=black, linewidth=0.04, fillstyle=solid,fillcolor=black, dimen=outer](4.206,1.7012126){0.12}
\pscircle[linecolor=black, linewidth=0.04, fillstyle=solid,fillcolor=black, dimen=outer](5.805,-0.69878733){0.12}
\pscircle[linecolor=black, linewidth=0.04, fillstyle=solid,fillcolor=black, dimen=outer](7.404,-1.4987873){0.12}
\pscircle[linecolor=black, linewidth=0.04, fillstyle=solid,fillcolor=black, dimen=outer](3.404,-2.2987874){0.12}
\pscircle[linecolor=black, linewidth=0.04, dimen=outer](1.805,-2.2987874){0.12}
\pscircle[linecolor=black, linewidth=0.04, dimen=outer](5.805,-2.2987874){0.12}
\pscircle[linecolor=black, linewidth=0.04, dimen=outer](3.805,-0.69878733){0.12}
\pscircle[linecolor=black, linewidth=0.04, dimen=outer](2.604,1.7012126){0.12}
\pscircle[linecolor=black, linewidth=0.04, dimen=outer](0.604,0.9012127){0.12}
\pscircle[linecolor=black, linewidth=0.04, dimen=outer](1.805,-0.69878733){0.12}
\pscircle[linecolor=black, linewidth=0.04, dimen=outer](5.805,0.9012127){0.12}
\pscircle[linecolor=black, linewidth=0.04, dimen=outer](6.607,2.1012127){0.12}
\psline[linecolor=black, linewidth=0.04](1.404,2.9212127)(0.64069444,1.0012127)
\psline[linecolor=black, linewidth=0.04](1.4206945,2.9012127)(2.5206945,1.7812127)
\psline[linecolor=black, linewidth=0.04](1.404,2.9012127)(6.4606943,2.1212127)
\psline[linecolor=black, linewidth=0.04](4.206,1.7212127)(2.7206945,1.7212127)
\psline[linecolor=black, linewidth=0.04](4.206,1.7012126)(5.6806946,0.9612127)
\psline[linecolor=black, linewidth=0.04](2.5806944,0.5412127)(2.604,1.6012127)
\psline[linecolor=black, linewidth=0.04](2.604,0.5412127)(1.8606944,-0.5987873)
\psline[linecolor=black, linewidth=0.04](2.604,0.4812127)(3.703,-0.6187873)
\psline[linecolor=black, linewidth=0.04](3.404,-2.2787874)(3.7606945,-0.7987873)
\psline[linecolor=black, linewidth=0.04](3.4206944,-2.2587874)(5.6806946,-2.2787874)
\psline[linecolor=black, linewidth=0.04](3.3806944,-2.2987874)(1.904,-2.2987874)
\psline[linecolor=black, linewidth=0.04](0.604,-1.5187873)(1.6806945,-2.2387874)
\psline[linecolor=black, linewidth=0.04](0.604,-1.4787873)(1.7206944,-0.75878733)
\psline[linecolor=black, linewidth=0.04](0.58069444,-1.4587873)(0.58069444,0.8212127)
\psline[linecolor=black, linewidth=0.04](5.805,-0.6787873)(5.7806945,0.80121267)
\psline[linecolor=black, linewidth=0.04](5.805,-0.7187873)(3.904,-0.6787873)
\psline[linecolor=black, linewidth=0.04](5.805,-0.7187873)(5.805,-2.1987872)
\psline[linecolor=black, linewidth=0.04](7.3806944,-1.4787873)(5.904,-2.2787874)
\psline[linecolor=black, linewidth=0.04](7.3806944,-1.4387873)(6.6606946,2.0212126)
\pscircle[linecolor=black, linewidth=0.04, dimen=outer](1.6206944,1.7812127){0.1}
\pscircle[linecolor=black, linewidth=0.04, dimen=outer](2.404,2.2012126){0.1}
\pscircle[linecolor=black, linewidth=0.04, dimen=outer](2.4406943,2.5412128){0.1}
\pscircle[linecolor=black, linewidth=0.04, dimen=outer](4.2406945,2.1212127){0.1}
\pscircle[linecolor=black, linewidth=0.04, dimen=outer](3.604,1.3412127){0.1}
\pscircle[linecolor=black, linewidth=0.04, dimen=outer](4.0,1.0612127){0.1}
\pscircle[linecolor=black, linewidth=0.04, dimen=outer](4.4206944,1.1212127){0.1}
\pscircle[linecolor=black, linewidth=0.04, dimen=outer](2.3606944,-0.23878731){0.1}
\pscircle[linecolor=black, linewidth=0.04, dimen=outer](2.6206944,-0.2587873){0.1}
\pscircle[linecolor=black, linewidth=0.04, dimen=outer](2.904,-0.2587873){0.1}
\pscircle[linecolor=black, linewidth=0.04, dimen=outer](0.7806944,-0.7787873){0.1}
\pscircle[linecolor=black, linewidth=0.04, dimen=outer](1.0806944,-0.9387873){0.1}
\pscircle[linecolor=black, linewidth=0.04, dimen=outer](0.10425,-2.0587873){0.1}
\pscircle[linecolor=black, linewidth=0.04, dimen=outer](3.3606944,-2.9187872){0.1}
\pscircle[linecolor=black, linewidth=0.04, dimen=outer](3.7206945,-1.7587873){0.1}
\pscircle[linecolor=black, linewidth=0.04, dimen=outer](3.8606944,-2.0787873){0.1}
\pscircle[linecolor=black, linewidth=0.04, dimen=outer](5.206,-1.0187873){0.1}
\pscircle[linecolor=black, linewidth=0.04, dimen=outer](5.5206943,-1.3387873){0.1}
\pscircle[linecolor=black, linewidth=0.04, dimen=outer](6.9806943,-0.7387873){0.1}
\pscircle[linecolor=black, linewidth=0.04, dimen=outer](6.6606946,-0.9987873){0.1}
\pscircle[linecolor=black, linewidth=0.04, dimen=outer](6.6206946,-1.3987873){0.1}
\pscircle[linecolor=black, linewidth=0.04, dimen=outer](6.6606946,-1.6587873){0.1}
\psline[linecolor=black, linewidth=0.04](5.8206944,-0.6787873)(6.560694,2.0212126)
\psline[linecolor=black, linewidth=0.04](1.404,2.9012127)(1.5806944,1.8812127)
\psline[linecolor=black, linewidth=0.04](1.4406945,2.9012127)(2.3406944,2.5812128)
\psline[linecolor=black, linewidth=0.04](1.404,2.9012127)(2.3206944,2.2612126)
\psline[linecolor=black, linewidth=0.04](4.206,1.7412127)(4.2206945,2.0412128)
\psline[linecolor=black, linewidth=0.04](4.1606946,1.7212127)(3.6606944,1.4012127)
\psline[linecolor=black, linewidth=0.04](4.1806946,1.7012126)(4.040694,1.1412127)
\psline[linecolor=black, linewidth=0.04](4.206,1.7012126)(4.3806944,1.2212127)
\psline[linecolor=black, linewidth=0.04](7.4206944,-1.4787873)(7.040694,-0.7987873)
\psline[linecolor=black, linewidth=0.04](7.3606944,-1.4787873)(6.7406945,-1.0187873)
\psline[linecolor=black, linewidth=0.04](7.3406944,-1.4987873)(6.706,-1.3787873)
\psline[linecolor=black, linewidth=0.04](7.3606944,-1.4787873)(6.7406945,-1.6387873)
\psline[linecolor=black, linewidth=0.04](5.7606945,-0.7187873)(5.2806945,-0.9787873)
\psline[linecolor=black, linewidth=0.04](5.7806945,-0.6787873)(5.560694,-1.2587873)
\psline[linecolor=black, linewidth=0.04](3.4206944,-2.2987874)(3.7806945,-2.1187873)
\psline[linecolor=black, linewidth=0.04](3.4206944,-2.2387874)(3.6606944,-1.8187873)
\psline[linecolor=black, linewidth=0.04](2.5606945,0.5412127)(2.3606944,-0.15878731)
\psline[linecolor=black, linewidth=0.04](2.5806944,0.4612127)(2.6206944,-0.15878731)
\psline[linecolor=black, linewidth=0.04](2.6206944,0.5012127)(2.8606944,-0.15878731)
\psline[linecolor=black, linewidth=0.04](0.56069446,-1.4587873)(0.16069442,-1.9787873)
\psline[linecolor=black, linewidth=0.04](3.3806944,-2.2987874)(3.3606944,-2.8187873)
\psline[linecolor=black, linewidth=0.04](0.58069444,-1.4787873)(1.0206944,-0.9987873)
\psline[linecolor=black, linewidth=0.04](0.604,-1.4587873)(0.76069444,-0.8587873)
\end{pspicture}
}
\end{center}
\caption{Example of the bipartite map obtained by erasing the labels of $\Gamma$.}
\label{figrepresent2}
\end{figure}
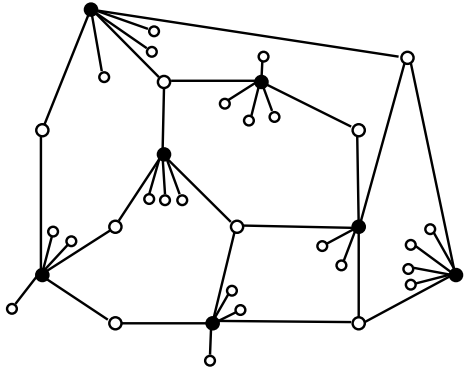

So, the $n$-regular map $\Gamma_r$ admits a labeling which can be transported  to a labeling of  the map $\tilde{\Gamma}$. Moreover, the map $\tilde{\Gamma}$ equipped with this labeling is an increasing bipartite map with equal number of faces and black vertices, each of constant degree. From this, one can construct a branched covering with $\tilde{\Gamma}$ as a representation.

In conclusion, there is a bijection between the set of $n$-regular maps and the set of representations of branched coverings with $n$ critical values up to the cyclic action of the permutation $(1 ~ 2 \cdots n)$ on the label of the representation.

\subsection{Definition}

In the previous subsection, we proved the existence of two different applications (a bijection and a surjection) in order to reduce the complexity of the map $\Gamma$. Obviously, we want to know if we can combine the two ideas used previously. So, let $\Gamma$ be a representation of a branched covering, and denote by $G$ the map obtained by erasing the label and each $1$-valence white vertex with its unique incident edge.

\begin{defrealiz}
We call $G$ a \textbf{skeleton} of the branched covering $\pi$.
\end{defrealiz}

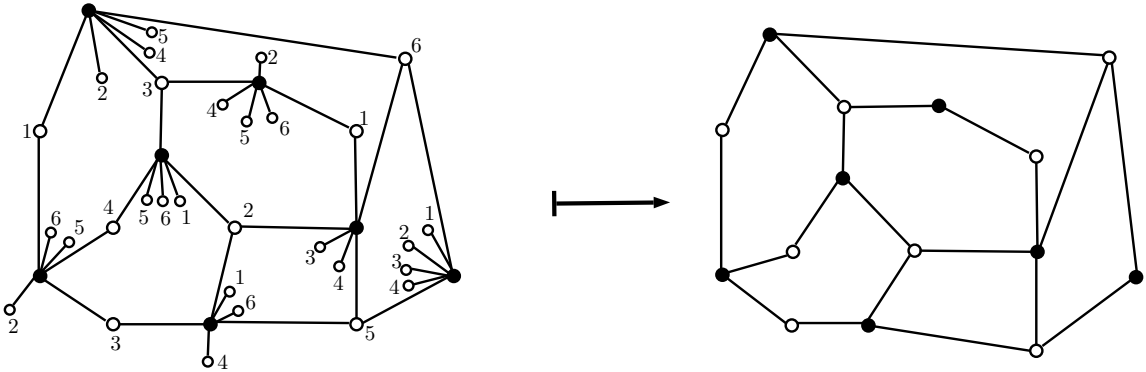
\begin{figure}[ht]
\begin{center}
\psscalebox{0.8 0.8} 
{
\begin{pspicture}(0,-3.064247)(18.738964,3.064247)
\pscircle[linecolor=black, linewidth=0.04, fillstyle=solid,fillcolor=black, dimen=outer](1.404,2.9459777){0.12}
\pscircle[linecolor=black, linewidth=0.04, fillstyle=solid,fillcolor=black, dimen=outer](0.604,-1.4540223){0.12}
\pscircle[linecolor=black, linewidth=0.04, fillstyle=solid,fillcolor=black, dimen=outer](2.604,0.5459777){0.12}
\pscircle[linecolor=black, linewidth=0.04, fillstyle=solid,fillcolor=black, dimen=outer](4.206,1.7459778){0.12}
\pscircle[linecolor=black, linewidth=0.04, fillstyle=solid,fillcolor=black, dimen=outer](5.805,-0.6540223){0.12}
\pscircle[linecolor=black, linewidth=0.04, fillstyle=solid,fillcolor=black, dimen=outer](7.404,-1.4540223){0.12}
\pscircle[linecolor=black, linewidth=0.04, fillstyle=solid,fillcolor=black, dimen=outer](3.404,-2.2540224){0.12}
\pscircle[linecolor=black, linewidth=0.04, dimen=outer](1.805,-2.2540224){0.12}
\pscircle[linecolor=black, linewidth=0.04, dimen=outer](5.805,-2.2540224){0.12}
\pscircle[linecolor=black, linewidth=0.04, dimen=outer](3.805,-0.6540223){0.12}
\pscircle[linecolor=black, linewidth=0.04, dimen=outer](2.604,1.7459778){0.12}
\pscircle[linecolor=black, linewidth=0.04, dimen=outer](0.604,0.9459777){0.12}
\pscircle[linecolor=black, linewidth=0.04, dimen=outer](1.805,-0.6540223){0.12}
\pscircle[linecolor=black, linewidth=0.04, dimen=outer](5.805,0.9459777){0.12}
\pscircle[linecolor=black, linewidth=0.04, dimen=outer](6.607,2.1459777){0.12}
\psline[linecolor=black, linewidth=0.04](1.404,2.9659777)(0.64069444,1.0459777)
\psline[linecolor=black, linewidth=0.04](1.4206945,2.9459777)(2.5206945,1.8259777)
\psline[linecolor=black, linewidth=0.04](1.404,2.9459777)(6.4606943,2.1659777)
\psline[linecolor=black, linewidth=0.04](4.206,1.7659777)(2.7206945,1.7659777)
\psline[linecolor=black, linewidth=0.04](4.206,1.7459778)(5.6806946,1.0059777)
\psline[linecolor=black, linewidth=0.04](2.5806944,0.58597773)(2.604,1.6459777)
\psline[linecolor=black, linewidth=0.04](2.604,0.58597773)(1.8606944,-0.5540223)
\psline[linecolor=black, linewidth=0.04](2.604,0.52597773)(3.703,-0.5740223)
\psline[linecolor=black, linewidth=0.04](3.404,-2.2340224)(3.7606945,-0.7540223)
\psline[linecolor=black, linewidth=0.04](3.4206944,-2.2140224)(5.6806946,-2.2340224)
\psline[linecolor=black, linewidth=0.04](3.3806944,-2.2540224)(1.904,-2.2540224)
\psline[linecolor=black, linewidth=0.04](0.604,-1.4740223)(1.6806945,-2.1940222)
\psline[linecolor=black, linewidth=0.04](0.604,-1.4340223)(1.7206944,-0.7140223)
\psline[linecolor=black, linewidth=0.04](0.58069444,-1.4140223)(0.58069444,0.8659777)
\psline[linecolor=black, linewidth=0.04](5.805,-0.6340223)(5.7806945,0.8459777)
\psline[linecolor=black, linewidth=0.04](5.805,-0.6740223)(3.904,-0.6340223)
\psline[linecolor=black, linewidth=0.04](5.805,-0.6740223)(5.805,-2.1540222)
\psline[linecolor=black, linewidth=0.04](7.3806944,-1.4340223)(5.904,-2.2340224)
\psline[linecolor=black, linewidth=0.04](7.3806944,-1.3940223)(6.6606946,2.0659778)
\pscircle[linecolor=black, linewidth=0.04, dimen=outer](1.6206944,1.8259777){0.1}
\pscircle[linecolor=black, linewidth=0.04, dimen=outer](2.404,2.2459776){0.1}
\pscircle[linecolor=black, linewidth=0.04, dimen=outer](2.4406943,2.5859778){0.1}
\pscircle[linecolor=black, linewidth=0.04, dimen=outer](4.2406945,2.1659777){0.1}
\pscircle[linecolor=black, linewidth=0.04, dimen=outer](3.604,1.3859777){0.1}
\pscircle[linecolor=black, linewidth=0.04, dimen=outer](4.0,1.1059777){0.1}
\pscircle[linecolor=black, linewidth=0.04, dimen=outer](4.4206944,1.1659777){0.1}
\pscircle[linecolor=black, linewidth=0.04, dimen=outer](2.3606944,-0.1940223){0.1}
\pscircle[linecolor=black, linewidth=0.04, dimen=outer](2.6206944,-0.2140223){0.1}
\pscircle[linecolor=black, linewidth=0.04, dimen=outer](2.904,-0.2140223){0.1}
\pscircle[linecolor=black, linewidth=0.04, dimen=outer](0.7806944,-0.7340223){0.1}
\pscircle[linecolor=black, linewidth=0.04, dimen=outer](1.0806944,-0.8940223){0.1}
\pscircle[linecolor=black, linewidth=0.04, dimen=outer](0.10425,-2.0140224){0.1}
\pscircle[linecolor=black, linewidth=0.04, dimen=outer](3.3606944,-2.8740222){0.1}
\pscircle[linecolor=black, linewidth=0.04, dimen=outer](3.7206945,-1.7140223){0.1}
\pscircle[linecolor=black, linewidth=0.04, dimen=outer](3.8606944,-2.0340223){0.1}
\pscircle[linecolor=black, linewidth=0.04, dimen=outer](5.206,-0.97402227){0.1}
\pscircle[linecolor=black, linewidth=0.04, dimen=outer](5.5206943,-1.2940223){0.1}
\pscircle[linecolor=black, linewidth=0.04, dimen=outer](6.9806943,-0.6940223){0.1}
\pscircle[linecolor=black, linewidth=0.04, dimen=outer](6.6606946,-0.9540223){0.1}
\pscircle[linecolor=black, linewidth=0.04, dimen=outer](6.6206946,-1.3540223){0.1}
\pscircle[linecolor=black, linewidth=0.04, dimen=outer](6.6606946,-1.6140223){0.1}
\psline[linecolor=black, linewidth=0.04](5.8206944,-0.6340223)(6.560694,2.0659778)
\psline[linecolor=black, linewidth=0.07, tbarsize=0.0706cm 5.0,arrowsize=0.053cm 2.0,arrowlength=1.4,arrowinset=0.0]{|->}(9.020695,-0.2540223)(10.960694,-0.23402229)
\psline[linecolor=black, linewidth=0.04](1.404,2.9459777)(1.5806944,1.9259777)
\psline[linecolor=black, linewidth=0.04](1.4406945,2.9459777)(2.3406944,2.6259778)
\psline[linecolor=black, linewidth=0.04](1.404,2.9459777)(2.3206944,2.3059778)
\psline[linecolor=black, linewidth=0.04](4.206,1.7859777)(4.2206945,2.0859778)
\psline[linecolor=black, linewidth=0.04](4.1606946,1.7659777)(3.6606944,1.4459777)
\psline[linecolor=black, linewidth=0.04](4.1806946,1.7459778)(4.040694,1.1859777)
\psline[linecolor=black, linewidth=0.04](4.206,1.7459778)(4.3806944,1.2659777)
\psline[linecolor=black, linewidth=0.04](7.4206944,-1.4340223)(7.040694,-0.7540223)
\psline[linecolor=black, linewidth=0.04](7.3606944,-1.4340223)(6.7406945,-0.97402227)
\psline[linecolor=black, linewidth=0.04](7.3406944,-1.4540223)(6.706,-1.3340223)
\psline[linecolor=black, linewidth=0.04](7.3606944,-1.4340223)(6.7406945,-1.5940223)
\psline[linecolor=black, linewidth=0.04](5.7606945,-0.6740223)(5.2806945,-0.9340223)
\psline[linecolor=black, linewidth=0.04](5.7806945,-0.6340223)(5.560694,-1.2140223)
\psline[linecolor=black, linewidth=0.04](3.4206944,-2.2540224)(3.7806945,-2.0740223)
\psline[linecolor=black, linewidth=0.04](3.4206944,-2.1940222)(3.6606944,-1.7740223)
\psline[linecolor=black, linewidth=0.04](2.5606945,0.58597773)(2.3606944,-0.11402229)
\psline[linecolor=black, linewidth=0.04](2.5806944,0.5059777)(2.6206944,-0.11402229)
\psline[linecolor=black, linewidth=0.04](2.6206944,0.5459777)(2.8606944,-0.11402229)
\psline[linecolor=black, linewidth=0.04](0.56069446,-1.4140223)(0.16069442,-1.9340223)
\psline[linecolor=black, linewidth=0.04](3.3806944,-2.2540224)(3.3606944,-2.7740223)
\psline[linecolor=black, linewidth=0.04](0.58069444,-1.4340223)(1.0206944,-0.9540223)
\psline[linecolor=black, linewidth=0.04](0.604,-1.4140223)(0.76069444,-0.8140223)
\rput[bl](2.2806945,1.5259777){$3$}
\rput[bl](0.3044,0.8259777){$1$}
\rput[bl](1.7606944,-2.6740222){$3$}
\rput[bl](5.9406943,-2.5140224){$5$}
\rput[bl](5.8406944,1.0459777){$1$}
\rput[bl](6.706,2.2659776){$6$}
\rput[bl](3.9406943,-0.4740223){$2$}
\rput[bl](1.6406944,-0.4340223){$4$}
\rput[bl](4.3406944,2.0659778){$2$}
\rput[bl](3.3406944,1.1859777){$4$}
\rput[bl](3.8806944,0.7259777){$5$}
\rput[bl](4.540694,0.8859777){$6$}
\rput[bl](1.5406945,1.4659777){$2$}
\rput[bl](2.505,2.0859778){$4$}
\rput[bl](2.5406945,2.4059777){$5$}
\rput[bl](0.08069443,-2.3940222){$2$}
\rput[bl](1.1606945,-0.7540223){$5$}
\rput[bl](0.7806944,-0.6140223){$6$}
\rput[bl](2.2206945,-0.5940223){$5$}
\rput[bl](2.5806944,-0.6140223){$6$}
\rput[bl](2.9206944,-0.5940223){$1$}
\rput[bl](3.5206945,-2.9940224){$4$}
\rput[bl](3.9806945,-1.9940223){$6$}
\rput[bl](3.805,-1.5940223){$1$}
\rput[bl](4.9606943,-1.2540222){$3$}
\rput[bl](5.4206944,-1.6940223){$4$}
\rput[bl](6.9206944,-0.5340223){$1$}
\rput[bl](6.503,-0.8340223){$2$}
\rput[bl](6.3806944,-1.3140223){$3$}
\rput[bl](6.3406944,-1.7540222){$4$}
\pscircle[linecolor=black, linewidth=0.04, fillstyle=solid,fillcolor=black, dimen=outer](12.60,2.5459776){0.12}
\pscircle[linecolor=black, linewidth=0.04, fillstyle=solid,fillcolor=black, dimen=outer](15.380694,1.3659778){0.12}
\pscircle[linecolor=black, linewidth=0.04, fillstyle=solid,fillcolor=black, dimen=outer](11.820694,-1.4340223){0.12}
\pscircle[linecolor=black, linewidth=0.04, fillstyle=solid,fillcolor=black, dimen=outer](14.220695,-2.2740223){0.12}
\pscircle[linecolor=black, linewidth=0.04, fillstyle=solid,fillcolor=black, dimen=outer](13.80,0.1659777){0.12}
\pscircle[linecolor=black, linewidth=0.04, fillstyle=solid,fillcolor=black, dimen=outer](17.00,-1.0540223){0.12}
\pscircle[linecolor=black, linewidth=0.04, fillstyle=solid,fillcolor=black, dimen=outer](18.620695,-1.4740223){0.12}
\pscircle[linecolor=black, linewidth=0.04, dimen=outer](11.820694,0.9659777){0.12}
\pscircle[linecolor=black, linewidth=0.04, dimen=outer](13.820694,1.3459777){0.12}
\pscircle[linecolor=black, linewidth=0.04, dimen=outer](12.980695,-1.0540223){0.12}
\pscircle[linecolor=black, linewidth=0.04, dimen=outer](14.980695,-1.0340223){0.12}
\pscircle[linecolor=black, linewidth=0.04, dimen=outer](12.960694,-2.2740223){0.12}
\pscircle[linecolor=black, linewidth=0.04, dimen=outer](16.980694,-2.6940222){0.12}
\pscircle[linecolor=black, linewidth=0.04, dimen=outer](16.980694,0.52597773){0.12}
\pscircle[linecolor=black, linewidth=0.04, dimen=outer](18.180695,2.1659777){0.12}
\psline[linecolor=black, linewidth=0.04](12.580694,2.5659778)(11.880694,1.0659777)
\psline[linecolor=black, linewidth=0.04](12.640695,2.5859778)(13.740694,1.4459777)
\psline[linecolor=black, linewidth=0.04](12.580694,2.5659778)(18.080694,2.2059777)
\psline[linecolor=black, linewidth=0.04](15.340694,1.3659778)(13.920694,1.3459777)
\psline[linecolor=black, linewidth=0.04](15.40,1.3659778)(16.860695,0.58597773)
\psline[linecolor=black, linewidth=0.04](17.00,-1.0340223)(16.980694,0.4259777)
\psline[linecolor=black, linewidth=0.04](17.020695,-1.0540223)(15.060695,-1.0340223)
\psline[linecolor=black, linewidth=0.04](17.020695,-1.0340223)(18.160694,2.0859778)
\psline[linecolor=black, linewidth=0.04](18.640694,-1.4540223)(18.220694,2.0859778)
\psline[linecolor=black, linewidth=0.04](18.60,-1.4740223)(17.080694,-2.6140223)
\psline[linecolor=black, linewidth=0.04](16.980694,-1.0540223)(16.980694,-2.5940223)
\psline[linecolor=black, linewidth=0.04](14.220695,-2.2740223)(16.880695,-2.6940222)
\psline[linecolor=black, linewidth=0.04](14.180695,-2.2340224)(14.90,-1.1140223)
\psline[linecolor=black, linewidth=0.04](13.80,0.20597771)(14.90,-0.9540223)
\psline[linecolor=black, linewidth=0.04](13.80,0.20597771)(13.820694,1.2659777)
\psline[linecolor=black, linewidth=0.04](13.780694,0.20597771)(13.020695,-0.9340223)
\psline[linecolor=black, linewidth=0.04](14.20,-2.2340224)(13.060695,-2.2340224)
\psline[linecolor=black, linewidth=0.04](11.80,-1.4140223)(12.860695,-2.2340224)
\psline[linecolor=black, linewidth=0.04](11.840694,-1.4140223)(12.880694,-1.1140223)
\psline[linecolor=black, linewidth=0.04](11.80,-1.4140223)(11.80,0.8859777)
\end{pspicture}
}
\end{center}
\caption{Example of a skeleton of a branched covering $\pi$ of degree $7$.}
\label{figconsreal}
\end{figure}

Notice that the function that sends a representation $\Gamma$ to a skeleton $G$ is not injective, in the sense that we can construct the representations of two different branched coverings that are sent to the same skeleton. For example, we can relabel by $7$ one of the white vertices initially labeled by $1$ in Figure \ref{figrepresent1}. This changes the representation $\Gamma$ and the associated branched coverings, but not the skeleton $G$.

By construction, $G$ is  a bipartite map, and so now a natural question is: which bipartite map comes from a skeleton of a branched covering? To answer this question, motivated by W. Thurston's treatment of the generic case, we introduce the following notion for bipartite maps: 

\begin{defbalmap}
\label{defbalmap}
A bipartite map $G$ is called a \textbf{balanced map}, or we just say that $G$ is balanced, if it satisfies the  following two conditions.
\begin{itemize}
\item (\textbf{global balance condition}) $G$ has as many faces as black vertices. We say that $G$ is globally balanced.
\item (\textbf{local balance condition}) for each submap $H$ containing at least one black vertex, the number of black vertices of $H$ is greater than or equal to the number of faces of $H$, i.e. $V_H \geq F_H$. We say that $G$ is locally balanced.
\end{itemize}
\end{defbalmap}

A star-like bipartite map with a central black vertex is a balanced map. A map with one vertex of each color and two edges (therefore two faces) is not balanced. 
A quadrilateral (i.e. a topological model of a square) is the simplest example of a balanced map that is not a tree.  
Some further examples of balanced maps can be found in Figures \ref{figbipmap} and \ref{figconsreal}.
  
By construction of a skeleton $G$, the global balance condition is natural (as this condition is still satisfied by the map $\Gamma$), but not sufficient to define a skeleton. We prove later that this condition is equivalent to the Riemann-Hurwitz condition (cf Proposition \ref{propRHcond}). This correlation between the two conditions allow us to make a connection between our model and the passport of a branched covering.

Notice that by definition, if $G$ is a bipartite map that satisfies the local balance condition, and $H \subset G$ with at least one black vertex, then $H$ also satisfies the local balance condition. 

\begin{lemconvsubmap}
\label{lemconvsubmap}
Let $G$ be a bipartite map, and suppose that $G$ is globally balanced. If for each full submap $H$ with at least one black vertex, we have the inequality $V_H \geq F_H$ then $G$ is balanced.
\end{lemconvsubmap}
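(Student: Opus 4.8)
The plan is to induct on the number of faces of $H$ that fail to be faces of $G$, peeling such faces off one at a time so as to reduce to full submaps, where the hypothesis applies directly; the engine of the argument is the identity of Proposition \ref{propdegunin} combined with the global balance $V_G = F_G$.

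Since a submap is by definition a (connected) planar map, what has to be proved is exactly: every connected submap $H$ of $G$ containing at least one black vertex satisfies $V_H \ge F_H$. (Observe first that global balance forces $F_G = V_G \ge 1$, so $G$ itself has a black vertex, and the statement is not vacuous.) I would fix such an $H$, let $k$ be the number of faces of $H$ that are not faces of $G$, and argue by induction on $k$. If $k \le 1$, then $H$ is a full submap of $G$ containing a black vertex, and $V_H \ge F_H$ is precisely the hypothesis.

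For the inductive step, suppose $k \ge 2$ and the claim holds for all connected submaps containing a black vertex and having fewer than $k$ faces that are not faces of $G$. I would invoke the construction from the proof of Proposition \ref{propconnconv}: list the faces $f_1, \ldots, f_k$ of $H$ that are not faces of $G$, let $K_{f_i}$ be the submap of $G$ consisting of everything lying in $f_i$ together with its boundary, and set $H_1 := H \cup \bigcup_{j \ne 1} K_{f_j}$ and $\widehat{H} := H \cup K_{f_1}$. Then $H_1$ is a full submap of $G$; $\widehat{H}$ is a connected submap of $G$ whose faces that are not faces of $G$ are exactly $f_2, \ldots, f_k$, so there are $k-1$ of them; both $H_1$ and $\widehat{H}$ contain $H$, hence contain a black vertex; and one checks directly that $H_1 \cup \widehat{H} = G$ and $H_1 \cap \widehat{H} = H$. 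Since $H$ is connected, Proposition \ref{propdegunin} applied to $H_1$ and $\widehat{H}$ (with $n = 1$) gives $V_G = V_{H_1} + V_{\widehat{H}} - V_H$ and $F_G = F_{H_1} + F_{\widehat{H}} - F_H$; subtracting and using $V_G = F_G$ yields
\[
  V_H - F_H = (V_{H_1} - F_{H_1}) + (V_{\widehat{H}} - F_{\widehat{H}}).
\]
The first summand is $\ge 0$ because $H_1$ is a full submap with a black vertex (the hypothesis), and the second is $\ge 0$ by the induction hypothesis applied to $\widehat{H}$; hence $V_H \ge F_H$, which closes the induction and gives the local balance condition, so $G$ is balanced.

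The delicate point — and the reason to go through the explicit submaps built in the proof of Proposition \ref{propconnconv} rather than its bare statement — is that the auxiliary map $\widehat{H}$ must stay connected. If the intermediate intersection were a disjoint union of $n \ge 2$ pieces, Proposition \ref{propdegunin} would add a term $n-1$ to $F_G$, i.e.\ subtract $n-1$ on the right-hand side of the displayed identity, which would ruin the sign; disconnected intersections can even acquire isolated white-vertex components, for which $V - F = -1$. Filling in exactly one offending face at a time keeps $\widehat{H} = H \cup K_{f_1}$ connected (it is $H$ with a single complementary region glued back along its boundary, which lies in $H$), and this is precisely what makes the inductive estimate go through.
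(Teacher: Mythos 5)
Your proof is correct, and it is a genuine variant of the paper's argument rather than a reproduction of it. The paper writes the given submap $H$ as the intersection $\bigcap_i H_i$ of the full submaps furnished by Proposition \ref{propconnconv} (with $H_i\cup H_j=G$ for all pairs), then applies Property \ref{propdegunin} to pairwise intersections and extends by induction, finishing with a separate paragraph that treats submaps with several connected components. You instead induct on the number $k$ of faces of $H$ that are not faces of $G$, peeling off a single such face at a time through the two-term decomposition $G=H_1\cup\widehat{H}$ with $H_1\cap\widehat{H}=H$, where $H_1$ is full (the map $H_{f_1}$ from the proof of Proposition \ref{propconnconv}) and $\widehat{H}=H\cup K_{f_1}$ has one fewer offending face. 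What your scheme buys is exactly the point you flag at the end: the intersection in your application of Property \ref{propdegunin} is the connected map $H$ itself, so $n=1$ and the correction term $(n-1)$ never appears, whereas in the paper's pairwise intersections $H_i\cap H_j$ can be disconnected and the $(n-1)$ term (and possible isolated white-vertex components) must be absorbed — which is what the paper's final connected-components paragraph is implicitly doing, somewhat more loosely than your argument. What the paper's version buys in exchange is a conclusion valid also for disconnected unions of submaps (it is used in that generality later, e.g.\ in the proofs of Theorems \ref{thbalmap} and \ref{thcond}); since submaps are connected by the paper's definitions, your proof covers the statement as written, and the disconnected extension follows from it by the same component-wise bookkeeping as in the paper's last paragraph.
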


In other words, we just need to verify the local condition for each full submap of $G$. Depending on the situation, we prefer to use Definition \ref{defbalmap} rather than Lemma \ref{lemconvsubmap}.

\begin{proof}
Let $H$ be a non full submap of $G$ with at least one black vertex. We want to prove that $F_H \leq V_H$. First, by Proposition \ref{propconnconv}, there exists a finite number $n$ of full submaps of $G$, denoted by $H_1,\ldots, H_n$, such that 
\begin{displaymath}
\bigcap_{i}{H_i} = H\quad\text{and}\quad  H_i\cup H_j=G\quad \text{for any pair } i\ne j.
\end{displaymath}
 Then, by hypothesis,
\begin{align*}
F_G & = V_G \\
F_{H_i} & \leq V_{H_i}, \quad \text{for } i=1,\ldots ,n.
\end{align*}
So, using  Property \ref{propdegunin}, we deduce that for each pair $(i,j) \in \{1,\ldots,n\}$, $i \neq j$, 
\begin{align*}
F_{H_i \cap H_j} & = F_{H_i} + F_{H_j} - F_{G} \\
& \leq V_{H_i} + V_{H_j} - V_{G} \\
& = V_{H_i \cap H_j}.
\end{align*}
By induction, we prove similarly that the intersection of any number of maps $H_i$ satisfies the same relation. In particular, we have
\begin{displaymath}
F_H = F_{\bigcap H_i} \leq V_{\bigcap H_i} = V_H.
\end{displaymath} 

Suppose now that $H$ has $k$ connected components $L_1,\ldots, L_k$. If one of them does not contain black vertices, then it does not contain  any edges so has a single white vertex. Erasing this connected component with not change the number of black vertices nor that of faces. Erasing now all the single-white-vertex connected components, each remaining connected component contains a black vertex (and there is at least one such component).
So we may as well assume that
each $L_i$, $i=1, \ldots , k$, contains at least one black vertex.

Then, using the previous result, we have
\begin{displaymath}
V_{L_i} \geq F_{L_i} \quad \forall i=1, \ldots , k.
\end{displaymath}
By Property \ref{propdegunin}, we deduce that
\begin{displaymath}
V_H = \sum_{i=1}^{k}{V_{L_i}} \geq \sum_{i=1}^{k}{F_{L_i}} \geq \sum_{i=1}^{k}{F_{L_i}} - k + 1 = F_H.
\end{displaymath}
\end{proof}

We end this subsection by a last proposition on balanced maps.

\begin{corconv}
\label{corconv}
Let $G$ be a balanced map, and $\mathcal{F}$ be a subset of $\mathcal{F}_G$, $\mathcal{F} \neq \mathcal{F}_G$. Denote by $\mathcal{V}$ the subset of $\mathcal{V}_G$ that contains every black vertex having an incidence with a face of $\mathcal{F}$, and by $F$, resp. $V$, the cardinal of $\mathcal{F}$, resp. $\mathcal{V}$. Then $F < V$.
\end{corconv}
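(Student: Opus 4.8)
The plan is to realise $\mathcal F$ — up to a single extra face — and $\mathcal V$ as the combinatorial data of an explicit submap of $G$, and then feed that submap to the local balance condition; throughout I assume $\mathcal F\neq\emptyset$, the statement being read in that range. Concretely, first I would let $H$ be the subgraph of $G$ consisting of all edges that lie on the boundary of some face of $\mathcal F$, together with their endpoints, carrying the embedding inherited from $G$. Since it is obtained from $G$ by deleting edges and vertices, $H$ is a submap (possibly a disjoint union of submaps) of $G$; write $H_1,\dots,H_k$ for its connected components. By construction every vertex of $H$ is an endpoint of a retained edge, hence has degree at least $1$ in $H$, so each $H_i$ carries at least one edge and therefore — $H$ being bipartite — at least one black vertex (and at least one white one).

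Next I would read off the data of $H$. For the vertices: a black vertex $v$ lies in $H$ if and only if some edge at $v$ borders a face of $\mathcal F$; since a vertex incident to a face always lies on the boundary walk of that face, this is exactly the condition $v\in\mathcal V$, whence $V_H=V$. For the faces: every $f\in\mathcal F$ has all of its boundary edges retained in $H$, so $f$ is still a connected component of $\mathbb S^2\setminus H$; moreover, because $\mathcal F\neq\mathcal F_G$, there is a face $g\in\mathcal F_G\setminus\mathcal F$, and any interior point of $g$ is a point of $\mathbb S^2\setminus H$ lying in none of those $f$. Hence $\mathbb S^2\setminus H$ has at least $F+1$ connected components, i.e. $F_H\geq F+1$.

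Finally I would assemble the estimate. Each $H_i$ contains a black vertex, so the local balance condition of Definition \ref{defbalmap} applies and gives $V_{H_i}\geq F_{H_i}$; summing over $i$ and using $V_H=\sum_i V_{H_i}$ together with the face-count convention for disjoint unions recalled before Property \ref{propdegunin}, namely $F_H=\sum_i F_{H_i}-(k-1)\leq\sum_i F_{H_i}$, one obtains $V_H\geq F_H$. Combining with the two identities just established,
\begin{displaymath}
V=V_H\geq F_H\geq F+1>F ,
\end{displaymath}
which is the desired strict inequality.

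The step I expect to require the most care is the bookkeeping around disconnectedness of $H$: one must check that $H$ has no isolated vertices — otherwise the local balance condition could not be invoked componentwise — and that passing to a disjoint union of submaps contributes only the $-(k-1)$ correction to the face count rather than something larger; both are routine given the conventions already in place. The one genuinely essential use of the hypothesis $\mathcal F\neq\mathcal F_G$ is the gain of the single extra face of $H$ coming from an unselected face, which is precisely what upgrades $V\geq F$ to the strict inequality $V>F$.
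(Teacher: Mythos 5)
Your proof is correct and follows essentially the same route as the paper: erase everything not bordering a face of $\mathcal{F}$, observe that the resulting submap has black-vertex set $\mathcal{V}$ and at least $F+1$ faces (the extra one coming from an unselected face, which is where $\mathcal{F}\neq\mathcal{F}_G$ enters), and apply the local balance condition. Your treatment is in fact slightly more careful than the paper's, which dismisses the possibly disconnected case with a ``without loss of generality,'' whereas you handle it componentwise using the face-count convention for disjoint unions.
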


\begin{proof}
By erasing the vertices and edges of $G$ that are not incident to a face of $\mathcal{F}$, we construct a submap (or a disjoint union of submap) of $G$. Without loss of generality, we can suppose that this construction gives a submap $H$ of $G$. By construction, the number of vertices of $H$ is equal to $V$ and the number of faces of $H$ is equal to $F+1$. As $G$ is locally balanced, we deduce immediately by Definition \ref{defbalmap} that $F+1 \leq V$.
\end{proof}

\subsection{From balanced maps to balanced graphs}

The goal of this subsection is to give some basic results about balanced maps in order to manipulate this notion.

\begin{propwhitedeg}
\label{propwhitedeg}
Let $G$ be a balanced map. Then a white vertex $w$ of $G$ can not be doubly incident to any face of $G$. In particular, no edge of $G$ is doubly incident to a face, except if the white tip is of degree $1$.

Moreover adding or erasing some $1$-degree white vertices together with its attached edge changes neither the global nor the local balance condition.
\end{propwhitedeg}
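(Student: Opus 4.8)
The plan is to establish the three assertions successively, the first being the heart of the matter. For the claim that \emph{no white vertex is doubly incident to a face}, I would argue by contradiction: suppose a white vertex $w$ of a balanced map $G$ occurs at least twice on the boundary walk of a face $f$. Since the embedding is cellular, $f$ is an open disk; joining two distinct occurrences of $w$ by a simple arc $\alpha$ whose interior lies in $f$ produces a loop based at $w$ with $\alpha\setminus\{w\}\subset\mathrm{int}(f)$, and this loop cuts $\mathbb{S}^2$ into two open disks $D_1,D_2$. As $\mathrm{int}(f)$ contains no vertex or edge of $G$, we have $\alpha\cap G=\{w\}$, so the submaps (possibly disjoint unions of submaps) $H_i:=G\cap\overline{D_i}$ satisfy $H_1\cup H_2=G$ and $H_1\cap H_2=\{w\}$, a single white vertex. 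The arc $\alpha$ also splits $f$ into pieces $f_1\subset D_1$ and $f_2\subset D_2$, and the portion of $\partial f$ bounding $f_i$ is a nontrivial walk from $w$ to $w$, hence contains an edge, hence a black vertex; as this portion lies in $\overline{D_i}$, each $H_i$ carries a black vertex. Then Property \ref{propdegunin}, applied with $n=1$, $V_{H_1\cap H_2}=0$, $F_{H_1\cap H_2}=1$, gives $F_G=F_{H_1}+F_{H_2}-1$ and $V_G=V_{H_1}+V_{H_2}$; combining this with global balance $F_G=V_G$ and with $V_{H_i}\ge F_{H_i}$ (local balance of $G$, extended to disjoint unions of pieces each containing a black vertex exactly as in the proof of Lemma \ref{lemconvsubmap}) yields $-1\ge 0$, absurd. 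The \emph{edge} statement follows immediately: if an edge $e$ with white tip $w$ of degree $\ge 2$ were incident to a face $f$ on both of its sides, the two corners at $w$ flanking $e$ would be distinct (because $w$ has another incident edge) and both contained in $f$, making $w$ doubly incident to $f$.

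For the stability under adding or erasing a degree-$1$ white vertex $w$ with its unique edge $e$, I would set $G'=G\setminus\{w,e\}=G\setminus\{w\}$ and record two elementary facts: removing $w$ deletes no black vertex, so $V_{G'}=V_G$; and $e$ is a pendant edge, incident to one face of $G$ on both sides, so erasing $e$ and $w$ neither merges nor creates faces, whence $F_{G'}=F_G$. Thus $G$ is globally balanced iff $G'$ is. For the local condition, one direction is immediate, since any submap of $G'$ with a black vertex is also a submap of $G$. Conversely, assuming $G'$ locally balanced, take a submap $H\subset G$ with a black vertex and set $H'':=H\setminus\{w,e\}\subset G'$; in $H$ the vertex $w$ is either isolated (if $e\notin H$) or the degree-$1$ end of the pendant edge $e$, so passing from $H$ to $H''$ removes only the white vertex $w$ (and possibly $e$) and, by the same disjoint-union bookkeeping used in Lemma \ref{lemconvsubmap}, leaves both $V$ and $F$ unchanged. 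Since $H''$ still contains a black vertex (the only vertex possibly lost is the white $w$), we get $V_H=V_{H''}\ge F_{H''}=F_H$, and $G$ is locally balanced.

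The step I expect to be most delicate is the first one: one must verify carefully that $H_1\cap H_2$ collapses to the single white vertex $w$, that each $H_i$ genuinely contains a black vertex, and that the argument survives the $H_i$ being disconnected — all of this resting on the emptiness of $\mathrm{int}(f)\cap G$ and on the disjoint-union versions of Property \ref{propdegunin} and Lemma \ref{lemconvsubmap}.
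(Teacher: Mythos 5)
Your argument is correct and follows essentially the same route as the paper: split $G$ along a Jordan curve through the doubly incident white vertex $w$ into two pieces meeting only at $w$, apply the local balance inequality to each piece, and contradict global balance via $F_G=F_{H_1}+F_{H_2}-1\le V_{H_1}+V_{H_2}-1=V_G-1$. You also carefully justify points the paper leaves implicit (each side containing a black vertex, the pendant-edge bookkeeping), which is a welcome addition but not a different method.
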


\begin{proof}
Let us argue by contradiction. Suppose that there is a Jordan arc inside a face forming a Jordan curve $\gamma$ with a white vertex $w$, with $\gamma$ separating the black vertices of $G$.
Erasing the vertices and edges of $G$ in each complementary component of $\gamma$ gives  two  submaps $G_1$ and $G_2$. We have $G=G_1\cup G_2$,  and  $G_1\cap G_2$ is reduced to a single white vertex. As $G_1$ and $G_2$ are  submaps of $G$ containing black vertices, and $G$ is locally balanced, we deduce that
\begin{equation}
\label{whitedeg1}
F_{G_i} \leq V_{G_i}, \quad \text{for } i \in \{1,2\}.
\end{equation}
Thus, by construction,  
\begin{align*}
F_G & = F_{G_1} + F_{G_2} - 1\le V_{G_1} + V_{G_2} -1 = V_G-1< V_G. 
\end{align*}
This contradicts the fact that $G$ is globally balanced.

The rest of the proposition is obvious.
\end{proof}

So, a bipartite map that satisfies both global and local conditions has some constraints. The next result gives another point of view about the balanced conditions.

\begin{thbalmaps}
\label{thbalmaps}
Let $G$ be a bipartite map. Then $G$ is globally balanced if and only if
\begin{displaymath}
\sum_{w \in  \mathcal{W}_G}{\left( \deg_G(w)-1 \right)} = 2V_G -2.
\end{displaymath}
Similarly, $G$ is  locally balanced if and only if for each submap $H$ of $G$, we have
\begin{displaymath}
\sum_{w \in  \mathcal{W}_H}{\left( \deg_H(w)-1 \right)} \leq 2V_H -2.
\end{displaymath}
\end{thbalmaps}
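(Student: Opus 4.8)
The plan is to convert both assertions into the Euler characteristic, using Proposition \ref{propdegbwvf} to trade ``sum of white degrees'' for ``number of edges minus number of white vertices''. Concretely, for any bipartite map $M$ (it will be applied to $G$ itself and to its submaps, which are connected planar maps), Proposition \ref{propdegbwvf} gives $\sum_{w\in\mathcal W_M}\deg_M(w)=E_M$, hence
\[
\sum_{w\in\mathcal W_M}\bigl(\deg_M(w)-1\bigr)=E_M-W_M .
\]
On the other hand, Euler's formula for $M$, counting black and white vertices together, reads $V_M+W_M+F_M=E_M+2$, so $E_M-W_M=V_M+F_M-2$. Combining the two identities yields the key relation
\[
\sum_{w\in\mathcal W_M}\bigl(\deg_M(w)-1\bigr)=V_M+F_M-2 .
\]

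For the global statement I apply this relation with $M=G$: the right-hand side equals $2V_G-2$ exactly when $F_G=V_G$, which is by definition the global balance condition. For the local statement I apply the same relation with $M=H$ an arbitrary submap of $G$ containing at least one black vertex; the inequality $\sum_{w\in\mathcal W_H}(\deg_H(w)-1)\le 2V_H-2$ then becomes $V_H+F_H-2\le 2V_H-2$, i.e. $F_H\le V_H$. Hence the displayed family of inequalities holds for all such $H$ if and only if $G$ is locally balanced in the sense of Definition \ref{defbalmap}. If one wishes to allow $H$ to be a disjoint union of several connected pieces, one replaces Euler's formula by its generalization in Theorem \ref{theuler2} and then argues component by component, exactly as in the proof of Lemma \ref{lemconvsubmap}; a submap reduced to a single white vertex must be excluded from the statement, since for it the two sides are $-1$ and $-2$, so it always fails the inequality while carrying no balance information.

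There is no real obstacle here: the content is entirely the pair of identities above. The only points requiring care are bookkeeping the two colours of vertices separately (it is the white vertices that appear in the degree sum, while $V_M$ counts only the black ones), writing Euler's formula in the correct form, and pinning down the range of the word ``submap'' so that the local equivalence is a genuine if-and-only-if — which is why I restrict to submaps carrying at least one black vertex, in accordance with Definition \ref{defbalmap}.
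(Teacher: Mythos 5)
Your argument is essentially the paper's own proof: the identity $\sum_{w}(\deg(w)-1)=E-W=V+F-2$ obtained from Proposition \ref{propdegbwvf} and Euler's formula, then read off $F=V$ (resp. $F\le V$) for $G$ (resp. for each submap $H$). Your added remark that submaps consisting of a single white vertex must be excluded (matching Definition \ref{defbalmap}) is a legitimate small precision that the paper's statement glosses over, but it does not change the substance.
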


The reader may notice some similarity with the Riemann-Hurwitz condition. More exactly, using Theorem \ref{thbalmap}, we can prove that there exists a relation between the Riemann-Hurwitz condition for branched data and the global balance condition for balanced maps.

\begin{proof}
Let $G$ be a bipartite map. By Euler characteristic, we have
\begin{displaymath}
F_G + (V_G + W_G) = E_G + 2,
\end{displaymath}
thus
\begin{equation}
\label{balmaps2}
E_G - W_G = F_G + V_G - 2.
\end{equation}
Now, as each edge of $G$ belongs to one and only one white vertex, the definition of the degree of a vertex implies that
\begin{displaymath}
\sum_{w \in \mathcal{W}_G}{\deg_G(w)} = E_G.
\end{displaymath}
From this relation and equality (\ref{balmaps2}), we deduce that
\begin{equation}
\label{balmaps1}
\sum_{w \in \mathcal{W}_G}{\left( \deg_G(v)-1 \right)} = E_G - W_G =F_G + V_G - 2.
\end{equation}
Therefore \begin{equation}
\label{balmaps3}
F_G=V_G \Longleftrightarrow \sum_{w \in \mathcal{W}_G}{\left( \deg_G(v)-1 \right)} =  2 V_G - 2
\end{equation}
and \begin{equation}
\label{balmaps4}
F_G\le V_G \Longleftrightarrow \sum_{w \in \mathcal{W}_G}{\left( \deg_G(v)-1 \right)} \le 2 V_G - 2
\end{equation}

Applying (\ref{balmaps3}) to $G$, we can still deduce the required condition such that $G$ is globally balanced.

\vspace{2mm}

Now, assume that  $G$ is locally balanced, and let $H$ be a submap of $ G$. Then by definition, we have the relation $F_H \le V_H $. Applying (\ref{balmaps4}) to  $H$ we get 
$$\sum_{w \in \mathcal{W}_H}{\left( \deg_H(v)-1 \right)} \le 2 V_H - 2.$$

Conversely assume that the above inequality holds for every submap $H$ of $G$. Then applying (\ref{balmaps4}) to $H$, we deduce immediately that $G$ is locally balanced.
\end{proof}

This result is important as it implies that the concept of balanced map may be defined using only the vertices and the edges of the map. In particular, it implies the following result.

\begin{corbalmaps}
\label{corbalmaps}
Let $G_1$ and $G_2$ be two bipartite maps, and suppose that $G_1$ and $G_2$ are equivalent as graphs. Then $G_1$ is globally (resp. locally) balanced if and only if $G_2$ is globally (resp. locally) balanced.
\end{corbalmaps}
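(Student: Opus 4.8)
The plan is to reduce everything to Theorem \ref{thbalmaps}, which has the virtue of phrasing both balance conditions without ever mentioning faces. First I would record the elementary reformulation: since in a bipartite map each edge has exactly one white tip, for any submap $H$ of $G$ one has $\sum_{w\in\mathcal W_H}\deg_H(w)=E_H$, hence
\[
\sum_{w\in\mathcal W_H}\bigl(\deg_H(w)-1\bigr)=E_H-W_H .
\]
Consequently, by Theorem \ref{thbalmaps}, $G$ is globally balanced if and only if $E_G-W_G=2V_G-2$, and $G$ is locally balanced if and only if $E_H-W_H\le 2V_H-2$ for every submap $H$ of $G$. Both statements only involve the integers $V_H$, $W_H$, $E_H$ attached to $G$ and to its submaps.

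Next I would check that these data are carried along by a graph isomorphism. The key observation is that a submap of a planar map is nothing but a connected subgraph of it, equipped with the embedding it inherits: any connected subgraph of a graph embedded in $\mathbb S^2$ is again cellularly embedded, since on the sphere every complementary region of a connected graph is an open disc, while conversely a submap is connected by definition (Definition \ref{defsubgraph}). Now let $\theta$ be an isomorphism between $G_1$ and $G_2$ as graphs, given by incidence-preserving bijections $\phi:\mathcal V_{G_1}\cup\mathcal W_{G_1}\to\mathcal V_{G_2}\cup\mathcal W_{G_2}$ and $\psi:\mathcal E_{G_1}\to\mathcal E_{G_2}$; since $G_1$ and $G_2$ are bipartite maps we take $\theta$ to respect the two colours (this is forced once one fixes which colour goes to which — without it even a star-like map would give a counterexample). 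Then $\theta$ sends each connected subgraph $H$ of $G_1$ to a connected subgraph $\theta(H)$ of $G_2$, this assignment is a bijection between the submaps of $G_1$ and those of $G_2$, and because $\phi$ preserves colours and $\psi$ is a bijection we have $V_{\theta(H)}=V_H$, $W_{\theta(H)}=W_H$, $E_{\theta(H)}=E_H$ for every submap $H$.

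Putting the two steps together, the numeric condition ``$E_H-W_H\le 2V_H-2$ for all submaps $H$'' holds for $G_1$ if and only if it holds for $G_2$, and likewise ``$E_G-W_G=2V_G-2$'' holds for $G_1$ if and only if it holds for $G_2$ (apply the previous sentence to the whole map). By the reformulation of Theorem \ref{thbalmaps} obtained in the first paragraph, this is precisely the statement that $G_1$ is globally (resp. locally) balanced if and only if $G_2$ is. The one point that deserves care — and which I would flag as the crux of the argument — is the identification of submaps with connected subgraphs, i.e. verifying that the planar-map structure adds no constraint beyond connectedness on the sphere; granting that, the corollary is a direct translation of Theorem \ref{thbalmaps}.
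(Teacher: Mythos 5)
Your proposal is correct and follows the same route as the paper, which deduces the corollary directly from Theorem \ref{thbalmaps} (the paper gives no separate proof, treating it as immediate since that theorem phrases both balance conditions purely in terms of vertex and edge data). Your additional care about identifying submaps with connected subgraphs and about colour-preservation simply makes explicit the details the paper leaves implicit.
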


\subsection{Generalization of Thurston's result}

In this subsection, we finally answer our initial question: determine the bipartite maps that are skeletons of  branched coverings. 

\begin{thbalmap}
\label{thbalmap}
A bipartite map $G$ is a skeleton of a branched covering if and only if $G$ is a balanced map.
\end{thbalmap}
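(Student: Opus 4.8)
The statement is an equivalence, so the plan is to prove the two implications separately, the easy direction being necessity and the hard direction being sufficiency.

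\textbf{Necessity.} Suppose $G$ is a skeleton of a branched covering $\pi$ of degree $d$. By construction $G$ is obtained from the representation $\Gamma = \pi^{-1}(T)$ by erasing the labels and all $1$-valence white vertices together with their incident edges. By Theorem \ref{thgamcon}, $\Gamma$ is an increasing bipartite map with exactly $d$ faces and $d$ black vertices. By Proposition \ref{propwhitedeg}, erasing $1$-valence white vertices changes neither the number of faces nor the number of black vertices, so $G$ also has $d$ faces and $d$ black vertices: the global balance condition holds. For the local balance condition, let $H$ be a submap of $G$ containing at least one black vertex. Pull back the section argument: each face of $\Gamma$ corresponds to a simply connected domain of $\mathbb{S}^2 \setminus \Gamma$, and $\pi$ restricts on it to a homeomorphism onto $\mathbb{S}^2\setminus T$. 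I would argue that the $V_H$ black vertices of $H$, being preimages of the single black vertex of $T$, together with the edges of $H$, bound at most $V_H$ complementary regions inside $\mathbb{S}^2$; more precisely, lift $H$ through the section to see that each face of $H$ must contain at least one black vertex of $G$ in its closure, or use the counting identity of Theorem \ref{thbalmaps} applied to $H$ (each white vertex $w$ of $H$ has $\deg_H(w)\le d$ since it has at most $d$ preimage-edges from the structure of $\Gamma$). Actually the cleanest route is: a submap $H$ with $V_H$ black vertices lifts (through $\Gamma$, forgetting re-added labels) to a configuration whose faces are covered a bounded number of times, giving $F_H \le V_H$ directly by an Euler-characteristic count as in the paragraph following Theorem \ref{thgamcon}. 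So $G$ is balanced.

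\textbf{Sufficiency.} This is the heart of the theorem and where I expect the main obstacle. Let $G$ be a balanced map with $d$ black vertices (hence $d$ faces by global balance); let $n$ be a common value we can arrange by first adding enough $1$-valence white vertices (legal by Proposition \ref{propwhitedeg}) so that every black vertex has degree exactly... no — instead, the point is to realize $G$ as a skeleton, i.e. to reconstruct an increasing $n$-regular representation $\Gamma$ from $G$. The strategy, following Thurston's marriage argument sketched in the introduction, is: pick $n$ large enough, put $d_v := n - \deg_G(v)$ "extra" $1$-valence white vertices to be attached to each black vertex $v$, and $n - \deg_G(f)$ dummy white-vertex slots per face; the combinatorial content is that one can distribute the required white vertices among faces so that every face ends up with degree exactly $n$. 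First I would reformulate this as a system of distinct representatives / perfect matching problem: the "men" are the face-deficiencies $n - \deg_G(f)$ and the "women" are the black-vertex capacities, with adjacency given by incidence in $G$. Then I would verify Hall's marriage condition (Theorem \ref{thhall}): for any collection of faces, the total deficiency they need is controlled by the number of black vertices incident to them, and this is exactly what Corollary \ref{corconv} provides — $F < V$ for any proper subset $\mathcal{F}$ of faces with $\mathcal{V}$ the incident black vertices. The global balance condition ensures the total counts match so the matching is perfect. Given the matching, I attach the dummy $1$-valence white vertices accordingly to produce a bipartite map $\tilde\Gamma$ in which every face and every black vertex has degree $n$; this $\tilde\Gamma$ is an $n$-regular skeleton, and by Proposition \ref{proprepre} (via Proposition \ref{proplabel}) it comes from a representation of a branched covering of degree $d$. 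Erasing labels and the $1$-valence white vertices of that representation recovers $G$, so $G$ is a skeleton.

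\textbf{Main obstacle.} The delicate point is the bookkeeping in the matching step: one must phrase the face-filling problem so that Hall's condition translates precisely into Corollary \ref{corconv}, taking care that a black vertex incident to a face several times can absorb at most $n - \deg_G(v)$ dummies in total (not per incidence), and that after filling, the increasing / counterclockwise-order constraint of Definition \ref{definbipmap} can still be met — inserting dummy $1$-valence white vertices into corners is always possible, so this last point is routine, but setting up the bipartite auxiliary graph with the correct multiplicities and checking that the proper-subset inequality $F<V$ (rather than $F\le V$) is exactly what makes Hall's condition strict enough is where the real care lies. I would also need to handle connectivity of $\tilde\Gamma$, which follows since $G$ is connected and we only attach pendant vertices.
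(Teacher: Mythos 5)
Your sufficiency direction (balanced $\Rightarrow$ skeleton) is essentially the paper's argument: add white dots so that every face reaches degree $n$, phrase the attachment of dots to black vertices as a matching problem, check Hall's condition (Theorem \ref{thhall}) with the local balance and get a perfect matching from the global balance, then pass through the $n$-regular map and Propositions \ref{proprepre}/\ref{proplabel} to recover a branched covering. One caveat: Hall's condition here is the \emph{weighted} inequality $\sum_{f\in\mathcal F}\bigl(n-\deg_G(f)\bigr)\le\sum_{v\in\mathcal V}\bigl(n-\deg_G(v)\bigr)$, and Corollary \ref{corconv} only gives the cardinality inequality $F<V$; the paper actually derives the weighted inequality by forming the submap $H$ spanned by the black vertices incident to $\mathcal F$ and using $F_H\le V_H$ together with the edge count $\sum_{v\in\mathcal V_H}\deg_H(v)=E_H$ (as in Theorem \ref{thbalmaps}), where $\mathcal F_H$ is in general strictly larger than $\mathcal F$. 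You flag this bookkeeping as the obstacle, which is fair, but as written the reduction of Hall's condition to Corollary \ref{corconv} is not yet a proof.

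The genuine gap is in the necessity direction. Global balance is fine (erasing $1$-valent white vertices changes neither $V$ nor $F$, and $\Gamma$ has $d$ black vertices and $d$ faces by Theorem \ref{thgamcon}). But for local balance you offer only sketches, and the specific claims do not yield $V_H\ge F_H$: a black vertex of $G$ lying in the closure of a face of $H$ need not be a vertex of $H$ at all (it may sit in the interior of that face once the edges of $G\setminus H$ are erased), and even when the black vertices do lie on $\partial H$, several faces of $H$ can share the same black boundary vertex, so counting faces by ``a black vertex in the closure'' requires an injective assignment --- which is itself a matching-type statement, not something the Euler count following Theorem \ref{thgamcon} provides (that count is global, for $\Gamma$ only). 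The paper closes exactly this gap by running the deficiency bookkeeping in reverse: since the skeleton extends (by re-adding $1$-valent white vertices) to a map $\tilde\Gamma$ in which every black vertex and every face has degree $n$, the associated family admits a perfect matching, hence satisfies the marriage condition; applying the resulting inequality $S'\le A'$ to the dots lying in the faces $\mathcal F_H\setminus\{f_H\}$ of a \emph{full} submap $H$ gives $n(F_H-V_H)\le n-\deg_H(f_H)<n$, hence $F_H\le V_H$, and Lemma \ref{lemconvsubmap} upgrades this from full submaps to all submaps. Your proposal contains no substitute for this step, so the ``only if'' half of the theorem is not established.
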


\begin{proof}
To prove this Theorem, we need to come back to the construction of $G$. In fact, if $G$ is a skeleton of a branched covering, then there exists a representation $\Gamma$, i.e. an increasing bipartite map with as many black vertices as faces each of the same degree, such that $G = \phi(\Gamma)$. Moreover, we prove in subsection \ref{construction} that a representative is essentially defined by a regular planar map, or equivalently by a bipartite map $\tilde{\Gamma}$ with as many black vertices as faces, each of the same degree (we simply erase the labeling of the map $\Gamma$). In conclusion, if $G$ is a skeleton of a branched covering, then by adding some $1$-degree white vertices and edges to $G$, we can construct a map $\tilde{\Gamma}$. Conversely, if such a construction is possible, then $G$ is a skeleton of a branched covering.

\vspace{2mm}

First, suppose that $G$ is a balanced map with $n$ white vertices, and add inside each face $f$ of $G$ a number of white dots such that the number of these dots plus the number of white vertex having an incidence with $f$ is equal to $n$. Notice that by Proposition \ref{propwhitedeg}, the number of white vertex having an incidence with a given face of $G$ is equal to the degree of this face. Now, we want to prove that we can connect each of these added dots to a black vertex having an incidence with the same face $f$ by an edge in order to obtain a map where each black vertex is of degree $n$, and so the construction mentioned earlier is possible.

In order to prove this result, consider $\mathcal{A}$ the set containing all the black vertices $v$ of $G$, each repeated $(n-\deg_G(v))$ times, and denote by $\mathcal{S}$ a family of finite subsets $\{A_1, A_2, \ldots \}$ of $\mathcal{A}$ where each subset $A_i$ corresponds to a white dot $w$ and contains all the black vertices $v$ of $G$ (counted $(n-\deg_G(v))$ times) that can be connected to $w$. Then our construction is possible if and only if $\mathcal{S}$ is a perfect matching. So, let's prove that the family $\mathcal{S}$ is a perfect matching. 

Denote by $A$ the cardinal of $\mathcal{A}$, and by $S$ the cardinal of $\mathcal{S}$. By definition, we have
\begin{displaymath}
A = \sum_{v \in \mathcal{V}_G}{(n-\deg_G(v))} = n.V_G - \sum_{v \in \mathcal{V}_G}{\deg_G(v)},
\end{displaymath}
and
\begin{displaymath}
S = \sum_{f \in \mathcal{F}_G}{(n-\deg_G(f))} = n.F_G - \sum_{f \in \mathcal{F}_G}{\deg_G(f)}.
\end{displaymath}
Moreover, as $G$ is a bipartite map, then
\begin{displaymath}
\sum_{f \in \mathcal{F}_G}{\deg_G(f)} = \sum_{v \in \mathcal{V}_G}{\deg_G(v)} = E_G,
\end{displaymath}
as each edge of $G$ is incident to a unique black vertex. In conclusion, we have the relations
\begin{align}
\label{eqbalmap1}
A & = n.V_G - E_G, \\
\label{eqbalmap2}
S & = n.F_G - E_G.
\end{align}
As $G$ is a balanced map, then $V_G = F_G$, and so using relations (\ref{eqbalmap1}) and (\ref{eqbalmap2}) we can conclude that $A = S$. Now, we just have to prove that $\mathcal{S}$ satisfies the marriage condition. 

Consider $\mathcal{S}'$ a subfamily of $\mathcal{S}$, and denote by $\mathcal{A}'$ the subset of $\mathcal{A}$ containing each element of $\mathcal{A}$ that belongs to at least one subset of $\mathcal{S}'$. Finally, denote by $S'$, resp. $A'$, the cardinal of $\mathcal{S}'$, resp. $\mathcal{A}'$. Then, we have to prove that $S' \leq A'$. Up to adding some subsets of $\mathcal{S}$ into $\mathcal{S}'$, we can suppose that if a subset $A_i$ corresponding to a white dot $w$ belongs to $\mathcal{S}'$ then each other subset corresponding to a white dot in the same face of $G$ as $w$ also belongs to $\mathcal{S}'$, as this operation does not increase the value $A'$. 

Then, there exists a subset $\mathcal{F}$ of $\mathcal{F}_G$ such that the family $\mathcal{S}'$ corresponds to all the white dots belonging to a face of $\mathcal{F}$. We denote by $\mathcal{V}$ the subset of $\mathcal{V}_G$ that contains each black vertex having an incidence with a face of $\mathcal{F}$. Now consider the submap $H$ of $G$ obtained by erasing each edge of $G$ that is not incident to a vertex of $\mathcal{V}$, as well as each vertex that is not incident to one of the remaining edges. Clearly, by construction, we have
\begin{displaymath}
\mathcal{V} = \mathcal{V}_H \quad \text{and} \quad \mathcal{F} \subset \mathcal{F}_H.
\end{displaymath}
Let's prove our inequality. By definition, we have
\begin{equation}
\label{eqaprim}
A' = \sum_{v \in \mathcal{V}}{(n - \deg_G(v))} = \sum_{v \in \mathcal{V}_H}{(n - \deg_H(v))} = n.V_H - \sum_{v \in \mathcal{V}_H}{\deg_H(v)} = n.V_H - E_H.
\end{equation} 
So, we can deduce that
\begin{align*}
S' & = \sum_{f \in \mathcal{F}}{(n - \deg_G(f))} \\
& = \sum_{f \in \mathcal{F}}{(n - \deg_H(f))} \\
& \leq \sum_{f \in \mathcal{F}_H}{(n - \deg_H(f))} \\
& = n.F_H - E_H \\
& \leq n.V_H - E_H, \quad \text{as } G \text{ is locally balanced}.
\end{align*}
In conclusion, using the relation (\ref{eqaprim}), we have $S' \leq A'$ and so $\mathcal{S}$ satisfies the marriage condition.

\vspace{2mm}

Conversely, suppose that $G$ is a skeleton of a branched covering, then by adding some $1$-degree white vertices and edges to $G$, we can construct a bipartite map $\tilde{\Gamma}$ such that each black vertex and face have the same degree $n$. As previously, consider $\mathcal{A}$ the set containing all the black vertices $v$ of $G$, each repeated $(n-\deg_G(v))$ times, and denote by $\mathcal{S}$ a family of finite subsets $\{A_1, A_2, \ldots \}$ of $\mathcal{A}$ where each subset $A_i$ corresponds to a $1$-degree white vertex $w$ of $\tilde{\Gamma}$ and contains all the black vertices $v$ of $G$ (counted $(n-\deg_G(v))$ times) being incident to the same face as $w$. The existence of the map $\tilde{\Gamma}$ implies that $\mathcal{S}$ is a perfect matching. 

Denote by $S$, resp. $A$, the cardinal of $\mathcal{S}$, resp. $\mathcal{A}$. By definition, $S$ and $A$ satisfy the relations (\ref{eqbalmap1}) and (\ref{eqbalmap2}), and as $\mathcal{S}$ is a perfect matching, $S = A$. So we can still deduce that $F_G = V_G$, i.e. $G$ is globally balanced. 

Now, consider a full submap $H$ of $G$ with at least two faces, $H \neq G$, and denote by $f_H$ the unique face of $H$ that is not a face of $G$. Then, let $\mathcal{S}'$ be the subfamily of $\mathcal{S}$ that corresponds to all the $1$-degree white vertices of $G$ belonging to a face of $\mathcal{F}_H \setminus \{f_H\}$, and $\mathcal{A}'$ be the subset of $\mathcal{A}$ containing each element of $\mathcal{A}$ that belongs to at least one subset of $\mathcal{S}'$. Then, we prove easily that
\begin{align*}
S' := card(\mathcal{S}') & = \sum_{f \in \mathcal{F}_H \setminus \{f_H\}}{(n-\deg_G(f))}, \\
& = \sum_{f \in \mathcal{F}_H \setminus \{f_H\}}{(n-\deg_H(f))}.
\end{align*}
Thus, \begin{equation}
\label{eqbalmap3}
S' = n.F_H - A_H - (n-\deg_H(f_H)).
\end{equation} Similarly, we have
\begin{align*}
A' := card(\mathcal{A}') & = \sum_{v \in \mathcal{V}_H}{(n-\deg_G(v))} \\
& \leq \sum_{v \in \mathcal{V}_H}{(n-\deg_H(v))},
\end{align*}
and so \begin{equation}
\label{eqbalmap4}
A' \leq n \cdot V_H - A_H.
\end{equation}

Moreover, as $\mathcal{S}$ satisfies the marriage condition, then $S' - A' \leq 0$, and so using relations (\ref{eqbalmap3}) and (\ref{eqbalmap4}), we have 
\begin{displaymath}
0 \geq S' - A' \geq n \cdot (F_H - V_H) - (n-\deg_H(f_H)).
\end{displaymath}
Finally, we deduce that
\begin{displaymath}
F_H - V_H \leq 1 - \frac{1}{n} \deg_H(f_H) < 1.
\end{displaymath}
In conclusion $F_H \leq V_H$, and so $G$ is locally balanced.
\end{proof}

A direct consequence of this geometric interpretation of a branched covering of the sphere, and of the Proposition \ref{propwhitedeg} is the following result.

\begin{corbalmap}
Let $\pi$ be a branched covering of the sphere $\mathbb{S}^2$, and $U$ a simply connected subset of $\mathcal{S}^2$ containing at most one critical value of $\pi$. Then the pull-back of $\mathbb{S}^2 \setminus U$ by $\pi$ is connected.
\end{corbalmap}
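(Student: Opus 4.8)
The plan is to trade the statement about $\pi^{-1}(\mathbb{S}^2\setminus U)$ for one about $\pi^{-1}(U)$. Since $\pi$ is surjective, $\pi^{-1}(\mathbb{S}^2\setminus U)=\mathbb{S}^2\setminus\pi^{-1}(U)$, so it is enough to show that $\pi^{-1}(U)$ is a finite disjoint union of simply connected domains: the complement in $\mathbb{S}^2$ of such a union is connected, exactly as was already used at the end of the proof of Theorem \ref{thgamcon}. We may assume $U\neq\mathbb{S}^2$, so that $U$ is homeomorphic to an open disc, and we split into two cases according to the number of critical values of $\pi$ lying in $U$.

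First, if $U$ contains no critical value, then $\pi\colon\pi^{-1}(U)\to U$ is the restriction of the unramified covering, hence a genuine covering map of degree $d$; as $U$ is simply connected this covering is trivial, so $\pi^{-1}(U)$ is a disjoint union of $d$ open sets each homeomorphic to $U$, and we conclude as above.

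Now suppose $U$ contains exactly one critical value $c$, so that $U\setminus\{c\}$ is an open annulus over which $\pi$ restricts to a covering map. Let $V$ be a connected component of $\pi^{-1}(U)$. I would first check that $V\cap\pi^{-1}(c)\neq\emptyset$: otherwise $V$ would be open and closed in $\pi^{-1}(U)$ and contained in $\pi^{-1}(U\setminus\{c\})$, yet its closure in $\mathbb{S}^2$ would contain a point of $\pi^{-1}(c)\subset\pi^{-1}(U)$ not lying in $V$ (a sequence in $V$ whose image tends to $c$ accumulates on $\pi^{-1}(c)$), contradicting closedness. Next, since $\pi^{-1}(c)$ is finite and $V$ is a connected surface, $V\setminus\pi^{-1}(c)$ is connected; it is moreover open and closed in $\pi^{-1}(U\setminus\{c\})$, hence it is a \emph{connected} covering of the annulus $U\setminus\{c\}$, and therefore is itself an annulus, with exactly two ends. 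Using the local normal form $z\mapsto z^{k}$ of $\pi$ at a point $y\in\pi^{-1}(c)$, a deleted neighbourhood of $y$ is a connected covering of a punctured disc around $c$, i.e. it realizes one of these two ends; since the other end sits over the boundary end of $U\setminus\{c\}$, there is exactly one such $y_V\in V\cap\pi^{-1}(c)$, and filling this single inner end back in turns $V\setminus\{y_V\}$ into an open disc. Thus $V$ is simply connected, and since the components of $\pi^{-1}(U)$ are disjoint open sets (in bijection with $\pi^{-1}(c)$) we obtain the desired decomposition of $\pi^{-1}(U)$, and the corollary follows.

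The crux of the argument is the claim that $V\setminus\pi^{-1}(c)$ is \emph{connected}, hence an annulus with only two ends; this is the same ``no wrap-around'' phenomenon which, read on a skeleton, is the content of Proposition \ref{propwhitedeg} (a white vertex is never doubly incident to a face). One could instead phrase everything combinatorially: pass to a representation $\pi^{-1}(T)$ as in the proof of Theorem \ref{thbalmap}, recall that its faces are precisely the simply connected components of $\pi^{-1}(\mathbb{S}^2\setminus T)$ by Theorem \ref{thgamcon}, and invoke Proposition \ref{propwhitedeg} around the white vertex over $c$; but the topological route above is shorter, and I expect the write-up to follow it.
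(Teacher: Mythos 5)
Your argument is correct (up to routine covering-space details), but it takes a genuinely different route from the paper. The paper proves this corollary with its combinatorial machinery: it builds a star-like map $T$ whose black vertex is a regular point and whose white vertices are the critical values lying in $\mathbb{S}^2\setminus U$ (the critical value inside $U$, if any, is simply omitted), pulls $T$ back by $\pi$, and invokes Proposition \ref{propwhitedeg} --- no white vertex of a balanced map is doubly incident to a face --- to conclude that suppressing one white vertex and its incident edges from the representation does not disconnect the pull-back; the connectivity of $\pi^{-1}(\mathbb{S}^2\setminus U)$ is then read off from the connectivity of $\pi^{-1}(T)$. You work on the complementary side instead: you show that each component $V$ of $\pi^{-1}(U)$ is an open disc (trivial covering when $U$ contains no critical value; when $U$ contains one critical value $c$, the set $V\setminus\pi^{-1}(c)$ is a connected finite covering of the annulus $U\setminus\{c\}$, hence an annulus whose inner end is filled by exactly one point of $\pi^{-1}(c)$), and you then conclude with the fact, already used in Theorem \ref{thgamcon}, that the complement of disjoint open simply connected domains in $\mathbb{S}^2$ is connected --- so you never use Theorem \ref{thbalmap} or Proposition \ref{propwhitedeg} at all (your closing guess is off: the paper follows the combinatorial route, not the topological one). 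Each approach buys something: yours is self-contained, purely topological, and in fact establishes the stronger local statement that the preimage of a disc around a critical value is a disjoint union of discs, each containing exactly one critical preimage; the paper's is a short corollary of the balanced-map theory and is there precisely to illustrate what Proposition \ref{propwhitedeg} gives. Two points you should tighten in a final write-up: both proofs implicitly take $U$ to be an open domain (as clearly intended by the statement), and in your claim that every component $V$ meets $\pi^{-1}(c)$ you should justify the existence of a sequence in $V$ whose image tends to $c$ --- if $V$ missed $\pi^{-1}(c)$ it would be a union of components of the covering over $U\setminus\{c\}$, hence would map onto all of $U\setminus\{c\}$, whose closure contains $c$, and the argument then closes as you indicate.
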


\begin{proof}
Let $V = \mathbb{S}^2 \setminus U$. As $U$ is simply connected and contains at most one critical value of $\pi$, then we can construct a star-like map $T$ with a unique black vertex representing a regular point of $\pi$, and each branched point of $\pi$ in $V$ is associated with a white vertex. Now consider the pull-back of $T$ by $\pi$. Notice that this construction is similar to the one in section \ref{construction}, except here we remove (at most) one of the branched point. Then, by Proposition \ref{propwhitedeg}, each white vertex of a balanced map is incident to a face at most once, so erasing a white vertex and its associated edge in the construction of the balanced map does not change the connectivity of the pull-back map. In conclusion, the pull-back of $T$ by $\pi$ is a connected map, and so, the pull-back of $\mathbb{S}^2 \setminus U$ by $\pi$ is also connected.
\end{proof}

\subsection{Summary}

From a representation $\Gamma$ of a branched covering, we create two different bipartite maps: a $n$-regular skeleton, i.e\@. a globally balanced map whose all the black vertices and faces are of degree $n$, and an increasing skeleton, i.e\@. an increasing, globally balanced map without $1$-valence vertices. More exactly, we define a surjection from the set of representations of a branched covering to the set of $n$-regular skeletons, whose the fiber is generated by the action of the permutation $(1 \cdots n)$ on the labels of the representation. And we define a bijection between the set of representations and the set of increasing skeletons.

Then, by composing the two previous applications, we define a surjection between the set of representations and the set of skeletons, and we prove in Theorem \ref{thbalmap} that a skeleton defines a unique balanced map (here, a balanced map is supposed without $1$-valence vertices). In the following, there is a summary of the relationships between the different sets of maps considered in this section.

\begin{figure}[ht]
\begin{center}
\psscalebox{0.8 0.8} 
{
\begin{pspicture}(0,-1.255)(13.02,1.255)
\rput[bl](0.0,-0.255){Representation}
\rput[bl](3.7,0.945){$n$-regular skeleton}
\rput[bl](3.7,-1.255){increasing skeleton}
\rput[bl](7.9,-0.155){skeleton}
\rput[bl](10.9,-0.255){balanced map}
\psline[linecolor=black, linewidth=0.04, arrowsize=0.053cm 2.0,arrowlength=1.4,arrowinset=0.0]{<<-}(3.5,0.945)(2.5,0.145)
\psline[linecolor=black, linewidth=0.04, arrowsize=0.053cm 2.0,arrowlength=1.4,arrowinset=0.0]{->>}(6.7,0.945)(7.7,0.145)
\psline[linecolor=black, linewidth=0.04, arrowsize=0.053cm 2.0,arrowlength=1.4,arrowinset=0.0]{<->}(2.5,-0.255)(3.5,-1.055)
\psline[linecolor=black, linewidth=0.04, arrowsize=0.053cm 2.0,arrowlength=1.4,arrowinset=0.0]{->>}(6.7,-1.055)(7.7,-0.255)
\psline[linecolor=black, linewidth=0.04, arrowsize=0.053cm 2.0,arrowlength=1.4,arrowinset=0.0]{<->}(9.5,-0.055)(10.7,-0.055)
\end{pspicture}
}
\end{center}
\end{figure}

\section{From balanced graphs to balanced matrices}

The main result of this section is to give a matrix interpretation of the balanced conditions we defined in the previous sections. First, we come back to the Hurwitz problem, and reprove that the Riemann-Hurwitz condition, mentioned in the introduction of this article, is a necessary condition for the realizability of a given branch datum $\mathcal{D}$ of degree $d$. Then, we consider another invariant: the ramification distribution. In particular, we prove that there exists a surjection from the set of branched coverings of degree $d$ to the set of ramification distribution of degree $d$.

\vspace{2mm}

Let $\mathcal{D} = [\Pi_1, \ldots , \Pi_n]$ be a branch datum of degree $d$, where each $\Pi_i = [k_1 , \ldots , k_{n_i}]$ is a partition of $d$. We recall that the branched weight of $\mathcal{D}$ is equal to
\begin{displaymath}
\nu(\mathcal{D}) = \sum_{i=1}^{n}{\nu(\Pi_i)}, \quad \text{with } \nu(\Pi_i) = \sum_{j=1}^{n_i}{k_j-1}.
\end{displaymath}

\begin{propRHcond}
\label{propRHcond}
(\textbf{Riemann-Hurwitz condition})

If $\mathcal{D}$ is realizable then $\nu(\mathcal{D}) = 2d-2$.
\end{propRHcond}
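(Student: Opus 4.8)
The plan is to route through the combinatorial model built in Section~3. Suppose $\mathcal{D}=[\Pi_1,\ldots,\Pi_n]$, with $\Pi_i=[k_1^{(i)},\ldots,k_{n_i}^{(i)}]$, is realizable, and fix a branched covering $\pi$ of degree $d$ whose branch datum is $\mathcal{D}$. Following Subsection~\ref{construction}, place the $n$ critical values at the $n$-th roots of unity, adjoin a black vertex at the origin together with a radial leg to each of them, forming the star-like map $T$, and set $\Gamma=\pi^{-1}(T)$. By Theorem~\ref{thgamcon}, $\Gamma$ is an increasing bipartite map with exactly $d$ faces; since the black vertex and each of the $n$ edges of $T$ has exactly $d$ preimages under $\pi$, the map $\Gamma$ has $d$ black vertices and $nd$ edges. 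Let $G$ be the skeleton of $\pi$, obtained from $\Gamma$ by erasing the labels and every $1$-valence white vertex together with its incident edge; erasing white vertices does not touch the black ones, so $V_G=d$.

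The next step is to read off the white-vertex degrees of $\Gamma$. A preimage of the critical value $c_i$ with local ramification number $k$ lies at the common endpoint of exactly $k$ edges of $\Gamma$: in suitable coordinates $\pi$ looks like $z\mapsto z^k$ near that point, and the single leg of $T$ ending at $c_i$ lifts to $k$ arcs issuing from the preimage (which are $k$ distinct edges since a bipartite map has no loops). Hence the white vertices of $\Gamma$ are, counted with multiplicity, exactly one of degree $k_j^{(i)}$ for each part $k_j^{(i)}$ of each $\Pi_i$; the ones of degree $1$ are precisely those removed in passing to $G$, and they contribute nothing to the weight. Therefore
\begin{displaymath}
\sum_{w\in\mathcal{W}_G}\bigl(\deg_G(w)-1\bigr)=\sum_{i=1}^{n}\sum_{j=1}^{n_i}\bigl(k_j^{(i)}-1\bigr)=\nu(\mathcal{D}).
\end{displaymath}

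To finish, invoke the characterization of skeletons: since $G$ is a skeleton of a branched covering, Theorem~\ref{thbalmap} gives that $G$ is a balanced map, hence globally balanced, so $F_G=V_G$. By the first equivalence of Theorem~\ref{thbalmaps}, $F_G=V_G$ is the same as $\sum_{w\in\mathcal{W}_G}(\deg_G(w)-1)=2V_G-2$. Combining this with the previous display and with $V_G=d$ yields $\nu(\mathcal{D})=2d-2$, as claimed.

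I expect the only genuinely delicate point to be the identification of $\deg_\Gamma(w)$ with the ramification number of the corresponding preimage of a critical value; once that local picture is pinned down, everything else is bookkeeping with Euler's formula. In fact one could bypass the balanced-map language entirely: $\Gamma$ is a cellular map on $\mathbb{S}^2$ with $d+m$ vertices (where $m=\sum_i n_i$ is the number of white ones), $nd$ edges and $d$ faces, so $d+m-nd+d=2$, i.e.\ $m=(n-2)d+2$, whence $\nu(\mathcal{D})=\sum_i(d-n_i)=nd-m=2d-2$. I would nevertheless keep the balanced-map argument in the text, as it is the version that exhibits the promised correspondence between the Riemann--Hurwitz condition and the global balance condition (cf.\ the remark following Theorem~\ref{thbalmaps}).
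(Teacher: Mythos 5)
Your proof is correct and follows essentially the same route as the paper: pass to a skeleton $G$ of a realizing covering, use Theorem~\ref{thbalmap} to get global balance ($F_G=V_G=d$), and conclude via the Euler-characteristic identity $\sum_{w\in\mathcal{W}_G}(\deg_G(w)-1)=2V_G-2$ (which you cite from Theorem~\ref{thbalmaps} and the paper recomputes directly). The ``bypass'' you mention at the end, applying Euler's formula directly to $\Gamma$, is also already in the paper, in the discussion immediately following Theorem~\ref{thgamcon}.
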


\begin{proof}
As $\mathcal{D}$ is realizable, there exists a branched covering of the sphere $\pi: \mathbb{S}^2 \rightarrow \mathbb{S}^2$ of degree $d$ such that $\mathcal{D} = \mathcal{D}(\pi)$. Denote by $G$ a skeleton of $\pi$. Then by Theorem \ref{thbalmap}, $G$ is balanced so it has exactly $d$ black vertices and $d$ faces, and moreover the values $k_i > 1$ in the different partitions of $\mathcal{D}$ correspond to the degree of the white vertices of $G$. Now, by Euler characteristic (Theorem \ref{theuler2}), we have:
\begin{displaymath}
(V_G + W_G) + F_G = E_G + 2.
\end{displaymath}
So, as $G$ is globally balanced,
\begin{displaymath}
E_G - W_G = V_G + F_G - 2 = 2 V_G - 2 = 2d-2.
\end{displaymath}
Moreover, by Property \ref{propdegbwvf}, we have the relation
\begin{displaymath}
E_G = \sum_{w \in \mathcal{W}_G}{deg(w)},
\end{displaymath}
and so
\begin{displaymath}
2d-2 = E_G - W_G = \sum_{w \in \mathcal{W}_G}{deg(w)} - W_G = \sum_{w \in \mathcal{W}_G}{\left(deg(w)-1 \right)}.
\end{displaymath}
\end{proof}

Notice that we don't use the local balanced condition of $G$ to prove that the Riemann-Hurwitz condition is a necessary condition. So, as a skeleton of a branched covering is necessarily a globally and locally balanced map, we can  deduce that the Riemann-Hurwitz condition is not a sufficient condition. So now, the natural question is to succeed in translating the local balanced condition on the bipartite map into a condition on the passport in order to find a necessary and sufficient condition for the Hurwitz problem.

Nevertheless, such a relation is not so simple as a passport has no information about the structure of the map $G$. More exactly, a given passport can correspond to several bipartite maps, as a passport has just information about the degree of white vertices. Conversely, a balanced map has no information on the number of critical values of the branched covering, and so a balanced map also corresponds to several passports. That's why we consider the following problem: let $(a_i)$ be a ramification distribution of degree $d$, can we construct a realized passport of degree $d$ using the values $a_i$?

First, we give a matrix interpretation of the balanced condition by using Theorem \ref{thbalmaps}, and Corollary \ref{corbalmaps}.

\vspace{2mm}

In fact, by Corollary \ref{corbalmaps}, if $G$ is a balanced map, then each map equivalent (as graph) to $G$ is also balanced. So, we can restrict our study to graph structure. The interest is that we can associate to a given bipartite graph a white-to-black {\bf incident matrix} $A=(a_{i,j})$ by the following construction: enumerate separately the white and black vertices. Then, the coefficient $a_{i,j}$ is equal to the number of edges having the white vertex $i$ and the black vertex $j$ as tips. Conversely, it's simple to construct a bipartite graph from a matrix containing only non-negative integers by using the same construction. Another interest of this construction is that the degree of a white (resp. black) vertex is just the sum of each coefficient in the corresponding line (resp. column).

Now, the main idea is to construct a ``good'' matrix that corresponds to a given ramification distribution $l$. 

We will denote by $\mathcal{M}_{m,d}(\{0,1\})$ the set of $m\times d$ matrices whose entries are either $0$ or $1$. 

\begin{defassmat}
Let $l=(a_1 , \ldots , a_m)$ be a ramification distribution of degree $d$. We say that a matrix $A \in \mathcal{M}_{m,d}(\{0,1\})$ is a \textbf{matrix representation} of $l$ if for all $i$, the $i$-th line of $A$ contains exactly $a_i$ values $1$.
\end{defassmat}

For example, a matrix representation of the ramification distribution $(2,2,2,2,2,2)$ of degree $4$ is given in Figure \ref{figpass1}.

\begin{figure}[ht]
\centering
\begin{minipage}[r]{.48\linewidth}
\centering
\begin{displaymath}
\begin{pmatrix}
1 & 1 & 0 & 0 \\
0 & 1 & 1 & 0 \\
0 & 0 & 1 & 1 \\
1 & 1 & 0 & 0 \\
0 & 1 & 1 & 0 \\
0 & 0 & 1 & 1
\end{pmatrix}
\end{displaymath}
\end{minipage} \hfill
\begin{minipage}[l]{.48\linewidth}
\centering
\psscalebox{0.7}{
\begin{pspicture}(0,-2.4985578)(1.8013892,2.4985578)
\psdots[linecolor=black, dotsize=0.2](0.9006946,2.4)
\psdots[linecolor=black, dotsize=0.2](0.9006946,0.8)
\psdots[linecolor=black, dotsize=0.2](0.9006946,-0.8)
\psdots[linecolor=black, dotsize=0.2](0.9006946,-2.4)
\psline[linecolor=black, linewidth=0.04](0.9006946,2.4)(0.10069458,1.6)(0.9006946,0.8)(0.10069458,0.0)(0.9006946,-0.8)(0.10069458,-1.6)(0.9006946,-2.4)(1.7006946,-1.6)(0.9006946,-0.8)(1.7006946,0.0)(0.9006946,0.8)(1.7006946,1.6)(0.9006946,2.4)
\psdots[linecolor=black, dotstyle=o, dotsize=0.2](0.10069458,1.6)
\psdots[linecolor=black, dotstyle=o, dotsize=0.2](0.10069458,0.0)
\psdots[linecolor=black, dotstyle=o, dotsize=0.2](0.10069458,-1.6)
\psdots[linecolor=black, dotstyle=o, dotsize=0.2](1.7006946,-1.6)
\psdots[linecolor=black, dotstyle=o, dotsize=0.2](1.7006946,0.0)
\psdots[linecolor=black, dotstyle=o, dotsize=0.2](1.7006946,1.6)
\end{pspicture}
}
\end{minipage}
\caption{Example of a matrix representation of $(2,2,2,2,2,2)$ and its associated graph.}
\label{figpass1}
\end{figure}
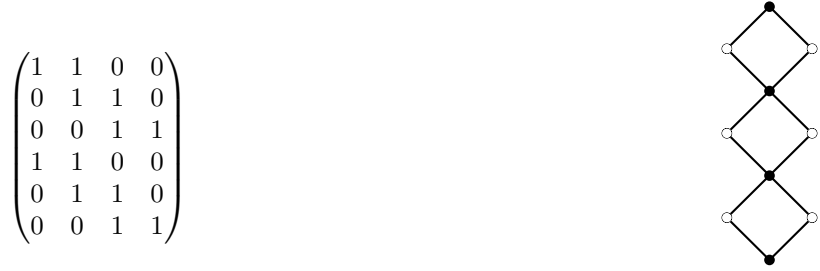

Then, we define a balanced condition for our matrix $A$ as follows:

\begin{defloccond}
We say that a matrix $A \in \mathcal{M}_{m,d}(\{0,1\})$ satisfies the \textbf{balanced condition} if for any choice of $k (\geq 1)$ distinct columns $d_1 , \ldots , d_k$ of $A$, we have
\begin{displaymath}
\sum_{i=1}^{m}\left[ \max\left( 1,\sum_{j=1}^{k}{a_{i,d_j}} \right) -1 \right] \leq 2k-2\quad  \text{and} \quad
\sum_{i=1}^{m}\left[ \sum_{j=1}^{d}{a_{i,j}}  -1 \right] =2d-2.
\end{displaymath}
\end{defloccond}

For example, the previous matrix representation of $(2,2,2,2,2,2)$ satisfies the balanced condition. The reason of the maximum between $1$ and the sum is to avoid the line whose each coefficient $a_{i,d_j} = 0$. Now, using Theorem \ref{thbalmaps}, we prove that this condition for a matrix is equivalent to the balanced conditions for a bipartite map.

\begin{thcond}
\label{thcond}
Let $A \in \mathcal{M}_{m,d}(\{0,1\})$ be the incident matrix of a planar bipartite graph $G$. Then $G$ is balanced if and only if $A$ satisfies the balanced condition.
\end{thcond}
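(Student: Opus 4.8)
The plan is to reduce both halves of the balanced condition on $A$ to the degree--sum reformulation of Theorem~\ref{thbalmaps}. By Corollary~\ref{corbalmaps}, whether $G$ is balanced depends only on its underlying graph, hence only on $A$ up to permutations of rows and columns, which is exactly the invariance already noted for Definition~\ref{defloccond}; so it is enough to match the two conditions term by term. Taking the white vertices as rows and the black vertices as columns, one has $W_G=m$, $V_G=d$ and $\deg_G(w_i)=\sum_{j=1}^{d}a_{i,j}$, so Theorem~\ref{thbalmaps} says $G$ is globally balanced if and only if $\sum_{i=1}^{m}\bigl(\sum_{j=1}^{d}a_{i,j}-1\bigr)=2d-2$, which is verbatim the equality of Definition~\ref{defloccond}.

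For the local condition, fix $k\ge 1$ distinct columns $d_1,\dots,d_k$, let $B=\{v_{d_1},\dots,v_{d_k}\}$ be the corresponding set of $k$ black vertices, and let $H_B$ be the subgraph of $G$ obtained by keeping $B$, every white vertex adjacent to at least one vertex of $B$, and every edge joining them. A direct computation (a white vertex with no neighbour in $B$ contributes $\max(1,0)-1=0$) gives
\begin{displaymath}
\sum_{i=1}^{m}\Bigl[\max\bigl(1,\textstyle\sum_{j=1}^{k}a_{i,d_j}\bigr)-1\Bigr]=\sum_{w\in\mathcal{W}_{H_B}}\bigl(\deg_{H_B}(w)-1\bigr),\qquad V_{H_B}=k .
\end{displaymath}
If $G$ is balanced and $H_B$ is connected, then $H_B$ is a submap of $G$ with at least one black vertex, so Theorem~\ref{thbalmaps} bounds the right-hand side by $2k-2$; if $H_B$ has connected components $L_1,\dots,L_c$, each $L_t$ is again a submap with a black vertex (no component is an isolated white vertex, since every white vertex of $H_B$ has a neighbour in $B$), and summing the bound $\sum_{w\in\mathcal{W}_{L_t}}(\deg_{L_t}(w)-1)\le 2V_{L_t}-2$ over $t$, together with $\sum_t V_{L_t}=k$ and $c\ge 1$, gives the right-hand side $\le 2k-2c\le 2k-2$. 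Hence $A$ satisfies the balanced condition.

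Conversely, suppose $A$ satisfies the balanced condition, and let $H$ be any submap of $G$ containing a black vertex; put $B=\mathcal{V}_H$, $k=|B|$. Since $H$ is connected, each of its white vertices has a neighbour in $B$, so $H$ is a subgraph of $H_B$ with the same set of black vertices, and since $w\mapsto \deg(w)-1$ only increases when edges or white vertices of degree $\ge1$ are adjoined, the first displayed identity gives
\begin{displaymath}
\sum_{w\in\mathcal{W}_H}\bigl(\deg_H(w)-1\bigr)\le\sum_{w\in\mathcal{W}_{H_B}}\bigl(\deg_{H_B}(w)-1\bigr)\le 2k-2=2V_H-2 .
\end{displaymath}
By Theorem~\ref{thbalmaps} this makes $G$ locally balanced, and with the global part above $G$ is balanced.

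The genuinely delicate point is the disconnected case of $H_B$: one must observe that every component of $H_B$ still carries a black vertex, so the term $-2c$ with $c\ge1$ absorbs the discrepancy between ``$k$ black vertices'' and ``one submap per component''; the monotonicity used in the converse is then routine. Everything else is a purely mechanical translation through Theorem~\ref{thbalmaps} and Corollary~\ref{corbalmaps}.
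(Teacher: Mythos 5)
Your proof is correct and follows essentially the same route as the paper: both directions are reduced to the degree--sum reformulation of Theorem~\ref{thbalmaps}, with the forward direction handled by the submap spanned by the chosen columns and the converse by viewing a submap's incidence matrix as a submatrix of $A$. You are in fact slightly more explicit than the paper on the two delicate points (the component-wise bound $2k-2c\le 2k-2$ when that submap is disconnected, and the monotonicity $\deg_H(w)\le\deg_{H_B}(w)$ in the converse), which the paper only states in passing.
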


\begin{proof}
Notice first that as the sum of each coefficient in a given line is equal to the degree of the corresponding white vertex, the following condition
\begin{displaymath}
\sum_{i=1}^{m}\left[ \sum_{j=1}^{d}{a_{i,j}} -1 \right] = 2d-2,
\end{displaymath}
is exactly the same as the Riemann-Hurwitz condition. So, using Proposition \ref{propRHcond} if $G$ is globally balanced, then this condition is satisfied. The converse is obviously true.

Now, suppose that $G$ is balanced. We want to prove that $A$ satisfies the balanced condition. Choose arbitrarily $k<d$ columns of $A$, denoted $d_1 , \ldots , d_k$. By definition of $A$, each column corresponds to a black vertex of $G$. So, consider the submap (or the disjoint union of submaps) $H$ of $G$ constructed by erasing each black vertices of $G$ except those corresponding to the columns $d_i$ of $A$, as well as the edge coming from these vertices. Then, by applying Theorem \ref{thbalmaps} in each disjoint component of $H$, we deduce that:
\begin{displaymath}
\sum_{w \in \mathcal{W}_H}{(\deg_H(w)-1)} \leq 2 V_H - 2 = 2k-2.
\end{displaymath}
Moreover, by construction, we have
\begin{displaymath}
\deg_H(w) = \sum_{j=1}^{k}{a_{i,d_j}}, \quad \text{for a fixed } i.
\end{displaymath}
And so,
\begin{displaymath}
\sum_{w \in \mathcal{W}_H}{(\deg_H(w)-1)} = \sum_{i=1}^{m}\left[ \max\left( 1,\sum_{j=1}^{k}{a_{i,d_j}} \right) -1 \right] \leq 2k-2.
\end{displaymath}

\vspace{2mm}

Conversely, suppose that the matrix $A$ satisfies the balanced condition. Left to choose a representation of $G$ on the sphere, we can suppose that $G$ is a planar map. We still prove that $G$ is globally balanced using Proposition \ref{propRHcond}. Now, let's prove that $G$ is also locally balanced. Let $H$ be a submap of $G$, then we can associate a matrix to $H$ as previously. Moreover, this matrix is just a submatrix of $A$, i.e\@. is obtained by erasing some columns and lines of $A$. In particular, we deduce that there exists integers $d_1 , \ldots , d_k$, where $k$ is the number of black vertices of $H$, such that
\begin{displaymath}
\sum_{w \in \mathcal{W}_H}{(\deg_H(w)-1)} \leq \sum_{i=1}^{m}\left[ \max\left( 1,\sum_{j=1}^{k}{a_{i,d_j}} \right) -1 \right].
\end{displaymath}
As $A$ satisfies the balanced condition, we deduce that 
\begin{displaymath}
\sum_{w \in \mathcal{W}_H}{(\deg_H(w)-1)} \leq 2k-2 = 2V_H - 2.
\end{displaymath}
By Theorem \ref{thbalmaps}, we deduce immediately that $G$ is locally balanced.
\end{proof}

Now, we can finally answer our initial question:

\begin{thexists}
\label{thexists}
Let $l=(a_1, \ldots , a_m)$ be a ramification distribution of degree $d$. Then there exists a branched covering of degree $d$ thus the ramification number are given by $l$.
\end{thexists}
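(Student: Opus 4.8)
The plan is to reduce Theorem~\ref{thexists} to an explicit combinatorial construction and then carry that construction out. By the discussion following Theorem~\ref{thgamcon}, the degrees of the white vertices of a skeleton of a branched covering $\pi$ are exactly the ramification numbers of $\pi$ at its critical points; by Theorem~\ref{thbalmap} a bipartite map is a skeleton of some branched covering if and only if it is balanced; and by Corollary~\ref{corbalmaps} together with Theorem~\ref{thcond}, a connected planar bipartite graph is balanced if and only if its white-to-black incident matrix $A\in\mathcal{M}_{m,d}(\{0,1\})$ satisfies the balanced condition of Definition~\ref{defloccond}. So it suffices to produce, for the given ramification distribution $l=(a_1,\dots,a_m)$ of degree $d$, a matrix $A\in\mathcal{M}_{m,d}(\{0,1\})$ which is a matrix representation of $l$ in the sense of Definition~\ref{defassmat}, whose associated bipartite graph is connected and planar, and which satisfies the balanced condition; the resulting balanced map is then a skeleton of a degree $d$ branched covering whose critical ramification numbers are $a_1,\dots,a_m$. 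The global part of Definition~\ref{defloccond}, $\sum_i(a_i-1)=2d-2$, is precisely the Riemann--Hurwitz hypothesis on $l$ (compare Proposition~\ref{propRHcond}), so the real content is connectivity, planarity, and the local inequalities $\sum_{i}\bigl[\max\bigl(1,\sum_{j=1}^{k}a_{i,d_j}\bigr)-1\bigr]\le 2k-2$ for every $k<d$ and every choice of $k$ columns.

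For the construction I would reorder $l$ so that $a_1\ge\cdots\ge a_m\ge 2$, and first dispatch the model case $l=(2,\dots,2)$ (with $2(d-1)$ entries): it is realized by the \emph{doubled path} on black vertices $b_1,\dots,b_d$, where for each $j$ one inserts two degree-$2$ white vertices joining $b_j$ to $b_{j+1}$. This is a planar bipartite map with exactly $d$ faces ($d-1$ bigons and the outer face), and it is readily checked to be balanced, the local bound being saturated exactly on unions of subintervals of the path. For general $l$, I would attach each white vertex $w_i$ to a block of $a_i$ \emph{consecutive} black vertices along a fixed closed sweep through all $d$ black vertices of combinatorial length $\sum_i(a_i-1)=2(d-1)$ (so $w_i$ consumes $a_i-1$ steps), choosing the sweep and the blocks so that no $w_i$ meets a black vertex twice (keeping $A$ in $\{0,1\}$), consecutive blocks overlap (keeping $G$ connected), and placing the white vertices on the two sides of the sweep exhibits $G$ as a planar map. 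When the sorted sequence $(a_i-1)$ admits a prefix summing to $d-1$, the out-and-back sweep $b_1\to\cdots\to b_d\to b_{d-1}\to\cdots\to b_1$ does the job, the turn at $b_d$ falling at a block boundary, which is what prevents joining a $w_i$ to $b_{d-1}$ twice; the remaining cases, in which several $a_i$ are close to $d$, I would handle with a sweep whose turning point is relocated, or equivalently by placing the few largest $a_i$ on a small core submap and grafting the rest onto it.

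To verify the local inequalities, observe that for a set $S$ of $k<d$ columns, that is, $k$ black vertices, the left-hand side of the inequality is $E_{H_S}-W_{H_S}$, where $H_S$ is the submap (or disjoint union of submaps) induced on $S$ by erasing the other black vertices and their incident edges, and by Theorem~\ref{thbalmaps} this is $\le 2k-2$ if and only if $F_{H_S}\le k$. Since each $w_i$ is joined to at most $a_i\le d$ consecutive black vertices of the sweep and the sweep has total weight $2(d-1)$, only the white vertices whose block lies inside $S$ contribute a positive amount, and a direct count --- the same Hall-type bookkeeping used in the proof of Theorem~\ref{thbalmap} --- yields the bound, with equality exactly when $S$ is a union of blocks; Proposition~\ref{propwhitedeg} rules out the degenerate case of a white vertex doubly incident to a face. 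Taking $S$ equal to all columns recovers the global equality.

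The step I expect to be the main obstacle is precisely the block assignment in the second paragraph: arranging it so that connectivity, a genuine planar embedding with the correct face count $d$, \emph{and} all of the local inequalities hold simultaneously. In the regime where some prefix of $(a_i-1)$ equals $d-1$ this is transparent; in the regime where many of the $a_i$ are close to $d$ the sweep has very few turning points, and one must check carefully that every proper sub-collection of black vertices still respects the bound $2k-2$. Once a correct construction is fixed, the remainder is the counting above together with Theorems~\ref{thbalmap} and \ref{thcond}.
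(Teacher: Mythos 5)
Your reduction is exactly the paper's: build a $\{0,1\}$ matrix representation of $l$ whose associated planar connected bipartite graph satisfies the balanced condition, then invoke Theorem \ref{thcond} and Theorem \ref{thbalmap}. The gap is the step you yourself flag as the main obstacle: the block assignment. Your out-and-back sweep $b_1\to\cdots\to b_d\to b_{d-1}\to\cdots\to b_1$ only avoids attaching some white vertex twice to the same black vertex when the multiset $(a_i-1)$ contains a sub-multiset summing exactly to $d-1$, and this can fail: for $d=5$ and $l=(4,4,3)$ one has $(a_i-1)=(3,3,2)$, and no sub-multiset sums to $4$, so every placement of the blocks along your sweep forces a block to straddle the turning point and hit $b_{d-1}$ (or some other vertex) twice, leaving $\{0,1\}$ entries impossible. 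The proposed repairs (``relocate the turning point'', ``place the few largest $a_i$ on a small core submap and graft the rest'') are not constructions one can verify, so the theorem is not proved in precisely the regime you identify as hard.

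The paper removes the case analysis by sweeping \emph{cyclically} through the $d$ columns rather than out-and-back along a path: white vertex $i$ receives a block of $a_i$ cyclically consecutive columns, consecutive blocks overlapping in one column, with indices reduced modulo $d$ (this is the recursive rule with the $b_j=1+\sum_{i\le j}(a_i-1)$ in the paper's proof). Because $a_i\le d$, a cyclic interval of length $a_i$ never revisits a column, so the matrix is automatically in $\mathcal{M}_{m,d}(\{0,1\})$ for every ramification distribution, with no condition on prefixes of $(a_i-1)$; since the total length is $2d-2$, the sweep goes around essentially twice, each column receives between two and four ones and any two columns share at most two common ones, and these two structural facts are what the paper uses to check the local balanced condition and planarity (in your model case $l=(2,\ldots,2)$ the cyclic construction reproduces exactly your doubled path). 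If you replace your path sweep by this cyclic sweep, your verification paragraph can then be carried out uniformly; as it stands, both the existence of a valid block assignment in the hard cases and the ``Hall-type bookkeeping'' for it are asserted rather than established.
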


\begin{proof}
Consider the matrix $A \in \mathcal{M}_{m,d}(\{0,1\})$ constructed by the following recursive process: first place in the first line of $A$ a value $1$ in each column from $1$ to $a_1$. Now, suppose we still construct the $i$ first lines of $A$ and denote by $b_j$ the value $a_1 + \ldots + a_j - j + 1$, for $j=1, \ldots , n$. We suppose that $b_0=1$. If $b_i<d$, place in the $(i+1)$th line of $A$ a value $1$ in each column from $b_i$ to $b_{i+1}$, by coming back to $1$ if the number of the column exceeds $d$. Else place a value $1$ in each column from $b_i - d + 1$ to $b_{i+1} - d + 1$. 
By doing this process, we construct a matrix representation $A$ of $l$. See for example the previous matrix representation of $(2,2,2,2,2,2)$ in Figure \ref{figpass1}, or the example in Figure \ref{figpass2}. Now we have to prove that this matrix satisfies the balanced condition. Obviously, we still know that
\begin{displaymath}
\sum_{i=1}^{m}\left[ \sum_{j=1}^{d}{a_{i,j}} -1 \right] = \sum_{i=1}^{m}[a_i - 1] = 2d-2,
\end{displaymath}
as $l$ is a ramification distribution of degree $d$. To prove the inequalities, we need to notice two points coming from the construction of $A$. The first point is that $a_{m,d} = 1$, i.e\@. the last value $1$ we place by our process is always in the last column of $A$. Moreover, we reach the last column of $A$ exactly twice. This first point comes directly from the property satisfied by $l$:
\begin{displaymath}
\sum_{i=1}^{m}[a_i - 1] = 2d-2.
\end{displaymath}
In particular, each column of $A$ has necessarily $2$, $3$ or $4$ values $1$. The second point is a consequence of the first one: for any two columns $j$, $k$ of $A$, with $j \neq k$, there exists at most two lines such that $a_{i,j} = a_{i,k} = 1$. In other words, the inner product of the two columns is at most equals to $2$. Then, it's not difficult to prove that $A$ satisfies the balanced condition using these two properties. Moreover, the graph coming from $A$ is clearly planar using Kuratowski's theorem \cite{Kur}. We can also prove directly the graph is planar by constructing it with a similar process as the one proposed to construct the matrix $A$ (see Figures \ref{figpass1} and \ref{figpass2} for some explicit examples).

Finally, using Theorem \ref{thcond}, we prove that there exists a balanced map corresponding to the matrix $A$ (and so to the ramification distribution $l$), and so by Theorem \ref{thbalmap}, we deduce the wanted result.
\end{proof}

\begin{figure}[ht]
\centering
\begin{minipage}[r]{.48\linewidth}
\centering
\begin{displaymath}
\begin{pmatrix}
1 & 1 & 1 & 0 \\
1 & 0 & 1 & 1 \\
0 & 1 & 1 & 0 \\
0 & 0 & 1 & 1
\end{pmatrix}
\end{displaymath}
\end{minipage} \hfill
\begin{minipage}[l]{.48\linewidth}
\centering
\scalebox{0.5}{
\begin{pspicture}(0,-2.86)(3.7,2.86)
\psdots[linecolor=black, dotsize=0.2](1.7,2.76)
\psdots[linecolor=black, dotsize=0.2](1.7,1.16)
\psdots[linecolor=black, dotsize=0.2](1.7,-0.44)
\psdots[linecolor=black, dotsize=0.2](1.7,-2.04)
\psline[linecolor=black, linewidth=0.04](1.7,2.76)(0.1,1.16)(1.7,1.16)(2.5,0.36)(1.7,-0.44)(0.1,1.16)
\psline[linecolor=black, linewidth=0.04](1.7,-0.44)(0.5,-2.04)(1.7,-2.04)(2.5,-1.24)(1.7,-0.44)
\psbezier[linecolor=black, linewidth=0.04](0.5,-2.04)(1.3,-2.84)(2.5,-3.24)(3.3,-2.04)(4.1,-0.84)(3.3,1.96)(1.7,2.76)
\psdots[linecolor=black, dotstyle=o, dotsize=0.2](0.1,1.16)
\psdots[linecolor=black, dotstyle=o, dotsize=0.2](0.5,-2.04)
\psdots[linecolor=black, dotstyle=o, dotsize=0.2](2.5,0.36)
\psdots[linecolor=black, dotstyle=o, dotsize=0.2](2.5,-1.24)
\end{pspicture}
} 
\end{minipage}
\caption{Example of a matrix representation of $(3,3,2,2)$ and its associated graph.}
\label{figpass2}
\end{figure}
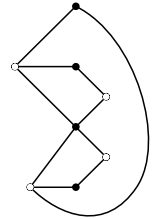

\nocite{DPS}

\vspace{4mm}

\textbf{Acknowledge:} I am grateful to Dylan Thurston and Gilles Schaeffer for many useful conversations. I also thank Tan Lei for introducing the subject, and for her constant help during the preparation of this work.

\bibliographystyle{alpha}
\bibliography{thesebib}

\end{document}